\documentclass{amsart}
\newif\ifdraft
\drafttrue

\usepackage{amssymb,indentfirst,xspace,bm}
\usepackage[retainorgcmds]{IEEEtrantools}
\usepackage{framed,paralist}
\usepackage[off]{pdfsync}
\usepackage{mathtools}
\usepackage[normalem]{ulem}
\usepackage[pdfborder={0 0 .1}]{hyperref}
\usepackage[initials,nobysame]{amsrefs}

\makeatletter
\def\@wraptoccontribs#1#2{}

\@mparswitchfalse
\makeatother

\ifdraft
    \newcommand{\sidenote}[1]{\pdfsyncstop\marginpar[\raggedleft\tiny #1]{\raggedright\tiny #1}\pdfsyncstart}
    \newcommand{\teqref}[1]{\emph{#1}}
\else
    \newcommand{\sidenote}[1]{}
    \newcommand{\teqref}{\error}
\fi
\newcommand{\warning}[1]{\typeout{}\typeout{WARNING: #1 at line \the\inputlineno}\typeout{}}
    {\endMakeFramed}

%
%
\makeatletter
\newcommand{\UWave}[2][blue]{\bgroup \markoverwith{\textcolor{#1}{\lower3.5\p@\hbox{\sixly \char58}}}\ULon{#2}}
\makeatother
\newcommand{\SOut}[2][red]{\bgroup\markoverwith {\textcolor{#1}{\rule[.45ex]{2pt}{.1ex}}}\ULon{#2}}

\newcommand{\highlight}[2][yellow]{\bgroup\markoverwith {\textcolor{#1}{\rule[-.2em]{2pt}{1.2em}}}\ULon{#2}}

%
%
\newenvironment{beqn}[1][:C?s]%
    {\left\{\begin{IEEEeqnarraybox}[\relax][c]{#1}}%
    {\end{IEEEeqnarraybox}\right.}

%
%
\newcommand{\del}{\partial}

\newcommand{\lap}{\Delta}

\newcommand{\inv}{^{-1}}

\newcommand{\grad}{\nabla}

\newcommand{\gradperp}{\grad^\perp}
\newcommand{\divergence}{\grad \cdot}
\newcommand{\curl}{\grad \times}

\renewcommand{\epsilon}{\varepsilon}
\renewcommand{\leq}{\leqslant}
\renewcommand{\geq}{\geqslant}

%
%
\newcommand{\R}{\mathbb{R}}

\newcommand{\T}{\mathbb{T}}
\newcommand{\E}{\mathcal{E}}

%
%
\newif\iftextstyle
\textstyletrue
\everydisplay\expandafter{\the\everydisplay\textstylefalse}

%
%
\DeclarePairedDelimiter{\abs}{\lvert}{\rvert}
\DeclarePairedDelimiter{\norm}{\lVert}{\rVert}
\DeclarePairedDelimiter{\average}{\langle}{\rangle}
\newcommand{\ip}[2]{\average{#1,#2}}
\newcommand{\altip}[2]{\average{\!\ip{#1}{#2}\!}}
\newcommand{\suchthat}{\;\iftextstyle|\else\big|\fi\;}
\newcommand{\defeq}{\stackrel{\scriptscriptstyle \text{def}}{=}}

%
%
\numberwithin{equation}{section}
\allowdisplaybreaks

%
%
\newtheorem{theorem}{Theorem}[section]

\newtheorem{lemma}[theorem]{Lemma}
\newtheorem{proposition}[theorem]{Proposition}
\newtheorem{corollary}[theorem]{Corollary}

\newtheorem*{theorem*}{Theorem}
\newtheorem*{lemma*}{Lemma}
\newtheorem*{proposition*}{Proposition}
\newtheorem*{corollary*}{Corollary}

\theoremstyle{definition}

\theoremstyle{remark}

\newtheorem*{remark*}{Remark}

%
%
\newcommand{\hdiv}{H_\text{div}}
\newcommand{\eqsysENS}{\eqref{eqnNS}--\eqref{eqnU0}\xspace}

%
%

 %

\renewcommand{\setminus}{-}

\begin{document}

\title[Coercivity and stability for extended Navier-Stokes]{Coercivity and stability results for an extended Navier-Stokes system}
\author{Gautam Iyer}
\address{Department of Mathematical Sciences, Carnegie Mellon University, Pittsburgh PA 15213}
\email{gautam@math.cmu.edu}
\author{Robert L. Pego}
\address{Department of Mathematical Sciences, Carnegie Mellon University, Pittsburgh PA 15213}
\email{rpego@cmu.edu}
\author{Arghir Zarnescu}
\address{Department of Mathematics, University of Sussex, Pevensey III, Falmer, BN1 9QH, United Kingdom.}
\email{A.Zarnescu@sussex.ac.uk}
\dedicatory{Dedicated to Peter Constantin, on the occasion of his 60\textsuperscript{th} birthday.}
\thanks{This material is based upon work supported by the National Science Foundation under grant nos. DMS 0604420,  DMS 0905723, DMS 1007914 and partially supported by the Center for Nonlinear Analysis (CNA) under the National Science Foundation Grant no. 0635983 and PIRE Grant no. OISE-0967140. The work of AZ was partly supported by an  EPSRC Science and Innovation
award to the Oxford Centre for Nonlinear PDE (EP/E035027/1).
Part of this work was done during a visit of AZ to Carnegie Mellon
University, the Centre for Nonlinear Analysis, whose support is gratefully
acknowledged.}

\begin{abstract}
In this article we study a system of equations that is known to
{\em extend} Navier-Stokes dynamics in a well-posed manner
to velocity fields that are not necessarily divergence-free.
Our aim is to contribute to an understanding of
the role of divergence and pressure in developing
energy estimates capable of controlling the nonlinear terms.
We address questions of global existence and stability
in bounded domains with no-slip boundary conditions.
Even in two space dimensions,
global existence is open in general, and remains so,
primarily due to the lack of a self-contained $L^2$ energy estimate.
However, through use of new $H^1$ coercivity estimates for the linear equations,
we establish a number of global existence and stability results,
including results for small divergence and a time-discrete scheme. 
We also prove global existence in 2D for any initial data, 
provided sufficient divergence damping is included.
\end{abstract}
\maketitle

\section{Introduction}

The zero-divergence constraint and the associated pressure field
are the source of both difficulties and benefits in the study
of the Navier-Stokes equations for the flow of viscous incompressible fluids.
On one hand, the divergence constraint complicates analysis and
approximation in a number of ways. 
For example, it produces a well-known inf-sup compatibility condition 
for mixed approximations that makes it difficult to achieve 
high accuracy with simple kinds of discretization.
On the other hand, the incompressibility constraint is responsible for the energy inequality, an estimate which is fundamental to global existence theory.

In this article we study global existence and stability
questions for a non-de\-gen\-er\-ate parabolic system
that is known to {\em extend} Navier-Stokes dynamics in
a well-posed manner to velocity fields that are not necessarily
divergence-free.
This system appeared recently in~\cite{bblLiuLiuPego}, and begins to
explain the good performance of certain numerical schemes where the
pressure is computed by solving boundary-value problems~\cite{bblLiuLiuPego2010}.
The idea to determine pressure by solving boundary-value problems
was also a feature of an earlier analytical study by Grubb and Solonnikov~\cites{bblGrubbSolonnikov2,bblGrubbSolonnikov3}, and the system we consider is equivalent
to one of their several `reduced' models.

Explicitly, we study the initial-boundary value problem
\begin{eqnarray}
\label{eqnNS}
&\del_t u + u \cdot \grad u + \grad p - \lap u = 0 &
\qquad\text{in $\Omega$},
\\
\label{eqnPdef}
&\grad p = (I-P)( \lap u - \grad \nabla\cdot u  -u\cdot\grad u  ), &
\\
\label{eqnNoSlipBC}
&u=0 & \qquad  \text{on $\del\Omega$},
\\ \label{eqnU0}
&u=u_0 & \qquad\text{in $\Omega$, when $t=0$}.
\end{eqnarray}
Here $u = u(x,t)$ is the velocity field, $p = p(x,t)$ the pressure,
and $P$ is the standard Leray projection of $L^2(\Omega,\R^d)$
onto the subspace of divergence-free vector fields which
are tangential at the boundary.
For simplicity we have taken the kinematic viscosity to be unity and
omitted body forces.

For the system \eqsysENS,
neither the initial data nor the solution are required to be divergence free.
Equation \eqref{eqnPdef} {\em defines} the pressure gradient, and
replaces the incompressibility constraint
\begin{equation}\label{e.div1}
\nabla\cdot u = 0
\qquad\text{in $\Omega$},
\end{equation}
that appears in the standard incompressible Navier-Stokes system.
However, if initially $\divergence u_0 = 0$, then the incompressibility constraint~\eqref{e.div1} holds for all time. This follows because \eqref{eqnNS}--\eqref{eqnNoSlipBC} show that
$\divergence u$ satisfies the heat equation
with no-flux boundary conditions:
\begin{equation}\label{eqnHeat}
\left\{
\begin{aligned}
\del_t \divergence u &= \lap \divergence u &&\text{in }\Omega,\\
\frac{\del}{\del \nu} \divergence u &= 0 &&\text{for }x \in \del \Omega, t > 0,
\end{aligned}\right.
\end{equation}
where $\frac{\del}{\del \nu}$ denotes the derivative with respect
to the outward unit normal to $\del \Omega$\footnote{Note that
$\int_\Omega \divergence u_0 = \int_{\del \Omega} u_0 \cdot \nu = 0$,
and so the compatibility condition for~\eqref{eqnHeat} is satisfied.}.
Thus if $\divergence u_0 = 0$, then the system~\eqsysENS reduces to the standard incompressible Navier-Stokes equations, and in this sense we say that the system~\eqsysENS extends the dynamics of the standard incompressible Navier-Stokes equations.

Of course, the dynamics of the standard incompressible Navier-Stokes equations could alternately be extended by completely omitting the $\grad \divergence u$ term from~\eqref{eqnPdef}. However, the presence of this term is crucial to the theory for two reasons. First, the~$\lap \divergence u$ term in~\eqref{eqnHeat} is a direct result of the $\grad \divergence u$ term in~\eqref{eqnPdef}, and provides exponential stability of the divergence free subspace. Thus, from a numerical perspective, errors in the divergence will be exponentially damped. The second, and perhaps deeper reason, is the essential role played by $\grad \divergence u$ in the well-posedness results of~\cites{bblLiuLiuPego,bblGrubbSolonnikov2,bblGrubbSolonnikov3}. We elaborate on this below.

Define the {\em Stokes pressure gradient}, $\grad p_s(u)$, by
\begin{equation}\label{eqnPsDef}
\grad p_s(u) = (I-P)(\lap u - \grad \nabla\cdot u).
\end{equation}
In context we often use $p_s$ to denote $p_s(u)$.
Given any $u\in H^2(\Omega,\R^d)$, the function $p_s(u)$
is determined as the unique mean-zero solution
to the boundary-value problem
\begin{equation}\label{eqnPsBVP}
\lap p_s=0 \quad\text{in $\Omega$},
\qquad
\nu\cdot\grad p_s = \nu\cdot(\lap -\grad\divergence )u
\quad\text{on $\del\Omega$}.
\end{equation}
Without the $\grad \divergence u$ term, the boundary condition in~\eqref{eqnPsBVP} would not make sense for all $u \in H^2(\Omega)$. With the $\grad \divergence u$ term, however, $\lap u-\grad\divergence u$ is $L^2$ and divergence-free; hence a standard trace theorem~\cite{bblConstFoias}*{Proposition 1.4} makes sense of the boundary condition in $H^{-1/2}(\del\Omega)$. The Grubb-Solonnikov~\cites{bblGrubbSolonnikov2,bblGrubbSolonnikov3} approach is based on using the boundary-value problem \eqref{eqnPsBVP} to determine the contribution of $\grad p_s(u)$ to $\grad p$, and proves well-posedness of~\eqsysENS using a theory of parabolic pseudo-differential initial-boundary value problems in $L^p$-based Sobolev spaces.

The convergence arguments of Liu et al.~\cite{bblLiuLiuPego}, on the other hand, result in
a comparatively simple local well-posedness proof
for \eqsysENS
for initial velocity in $H^1_0(\Omega)$.
This proof is based instead on the expression of the Stokes pressure gradient as a Laplace-Leray commutator:
\begin{equation}\label{eqnPCommutator}
\grad p_s = (\lap P-P\lap)u.
\end{equation}
This follows directly from \eqref{eqnPsDef} using the fact that
\begin{equation}\label{eqnGradDivId}
\grad\divergence u=\lap (I-P)u.
\end{equation}
Even in this approach, the `extra' $\grad \divergence u$ term in~\eqref{eqnPsDef} is directly responsible for the commutator representation~\eqref{eqnPCommutator}. The key idea used in~\cite{bblLiuLiuPego} is to treat the Stokes pressure gradient as the Laplace-Leray commutator~\eqref{eqnPCommutator}, and show (Theorem~\ref{thmCommutatorEstimate}, below) that it is dominated by the Laplacian (cf.~\eqref{eqnGradPsBound}) to leading order.

While the methods of~\cites{bblGrubbSolonnikov2,bblGrubbSolonnikov3,bblLiuLiuPego} effectively address local well-posedness of~\eqsysENS, they do not address global existence or stability. For the standard incompressible Navier-Stokes equations in three space dimensions, global existence of strong solutions is a well-known fundamental open problem~\cites{bblFeffermanClay,bblConstOpenProblems}. However, classical results establish global existence and regularity if if the flow is two-dimensional~\cite{bblLeray}, or the initial data is suitably small~\cites{bblLeray,bblConstFoias}.

In this paper, we establish a few such global existence results for the system~\eqsysENS. The main difficulty in proving a small-data global existence result for~\eqsysENS is not the nonlinearity. The root of the problem is that the \emph{linear} terms are not coercive under the standard $L^2$ inner product. We remedy this difficulty by using the commutator estimate in~\cite{bblLiuLiuPego} to construct an adjusted inner product under which the linear terms are coercive. This allows us to establish global existence for small initial data in two or three dimensions, and unconditional global stability of a time discrete scheme for the linear equations. This leads to an improved understanding of how the divergence and pressure can be handled to obtain energy estimates capable of controlling the nonlinear terms.

In 2D, we can extend our small-data global existence results to initial data with small divergence. For arbitrary initial data, we can add a sufficiently large divergence damping term to~\eqsysENS to obtain global existence. However, presently we are not able to prove global existence for~\eqsysENS for arbitrary initial data. The difficulty is that for the energy balance using the standard $L^2$ inner-product, the non-linear term is skew-symmetric, and does not contribute; however, the linear terms are not coercive. On the other hand, for the energy balance using the adjusted inner products we consider, the linear terms are coercive; however, nonlinearity is no longer skew symmetric, and contributes non-trivially.

Coercivity of the linear terms (albeit under a non-standard inner product) allows one to treat~\eqsysENS as a non-degenerate parabolic system. While this has helped simplify existence theory and the analysis of certain numerical approximation schemes, some other questions apparently become more difficult. In particular, while global existence of the standard incompressible Navier-Stokes equations is well known in 2D, it remains open for~\eqsysENS for general (2D) initial data.

\section{Main results}
\subsection{Coercivity of the extended Stokes operator.}
In the study of parabolic problems, an extremely useful (and often crucial) property is coercivity of the underlying linear operator. For~\eqref{eqnNS}, the linear operator in question is the \emph{extended Stokes operator}, $A$, defined by
\begin{equation}\label{eqnAdef}
A u \defeq - \lap u + \grad p_s(u) = -P\lap u - \grad \divergence u.
\end{equation}
Note that the last equality follows from the identity~\eqref{eqnGradDivId}. Under periodic boundary conditions, the extended Stokes operator $A$ is coercive. Indeed, under periodic boundary conditions, $P \lap = \lap P$, and so
\begin{equation}\label{eqnPeriodicStokesCoercivity}
\ip{u}{Au} = \norm{\grad u}_{L^2}^2,
\end{equation}
where $\ip{\cdot}{\cdot}$ denotes the standard $L^2$ inner product on the torus.

Under no-slip ($0$-Dirichlet) boundary conditions, the situation is surprisingly more complicated. The extended Stokes operator \emph{fails} to be positive, let alone coercive, under the standard $L^2$ inner product. To briefly explain why, observe
that for $u\in H^2\cap H^1_0(\Omega,\R^d)$,
\begin{equation}\label{eqnIpUAu}
\ip{u}{Au} = \int_\Omega u \cdot A u
    = \int_\Omega \abs{\grad u}^2 + \int_\Omega u \cdot \grad p_s.
\end{equation}
Now if $\divergence u \ne 0$, the second term on the right need not vanish.
In view of the commutator relation~\eqref{eqnPCommutator}, one might
expect $\norm{\grad p_s}_{L^2}$ to be dominated by $\norm{\grad u}_{L^2}$. This, however, is known to be false, and control of the Stokes pressure $p_s$ requires \emph{more} than one derivative on $u$. Consequently, if $\divergence u \neq 0$, then the second term on the right of~\eqref{eqnIpUAu} can dominate the first, and destroy positivity of $A$.

Since our primary interest in the extended Stokes operator is to
study~\eqsysENS, and the divergence of solutions to~\eqsysENS is well
controlled, one may hope to rectify non-positivity of $A$ by a coercivity estimate of the form
\begin{equation}\label{eqnHopefulCoercivicity}
\ip{u}{Au} \geq \epsilon \norm{\grad u}_{L^2}^2 - C \norm{\divergence u}_{L^2}^2.
\end{equation}
But again, this turns out to be false.
\begin{proposition}[Failure of Coercivity]\label{ppnNonPositivity}
Let $\Omega \subset \R^2$ be a bounded, simply connected $C^3$ domain. For any $\epsilon, C \geq 0$, there exists a function $u \in C^2(\bar\Omega)$ such that
\begin{equation}\label{eqnNonPositivity}
u = 0 \text{ on }\del \Omega,
\qquad\text{and}\qquad
\ip{u}{Au} \leq \epsilon \norm{\grad u}_{L^2}^2 - 
C \norm{\divergence u}_{L^2}^2.
\end{equation}
\end{proposition}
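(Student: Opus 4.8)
The plan is to first turn the quadratic form into something transparent, and then exhibit an explicit destabilizing family. Since $u=0$ on $\del\Omega$, integrating the second term of~\eqref{eqnIpUAu} by parts gives $\int_\Omega u\cdot\grad p_s=-\int_\Omega p_s\,\divergence u$, so that
\[
\ip{u}{Au}=\norm{\grad u}_{L^2}^2-\int_\Omega p_s\,\divergence u .
\]
Moreover, for $u\in C^2(\bar\Omega)$ with $u|_{\del\Omega}=0$ the mixed terms in $\int|\grad u|^2$ integrate away, yielding the orthogonal splitting $\norm{\grad u}_{L^2}^2=\norm{\divergence u}_{L^2}^2+\norm{\curl u}_{L^2}^2$, where $\omega:=\curl u$ is the scalar vorticity. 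Substituting these into~\eqref{eqnNonPositivity}, the proposition is then equivalent to producing, for each $\epsilon,C\ge0$, a field with
\[
\int_\Omega p_s\,\divergence u\;\ge\;(1-\epsilon+C)\norm{\divergence u}_{L^2}^2+(1-\epsilon)\norm{\omega}_{L^2}^2 .
\]
The only delicate regime is $\epsilon$ small and $C$ large; I must make the cross term dominate, which forces me to make $p_s$ large while keeping both $\divergence u$ and $\omega$ small in $L^2$.

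The mechanism I would exploit is the identity $\lap-\grad\divergence=-\curl\curl$, under which the Neumann data in~\eqref{eqnPsBVP} becomes $\nu\cdot\grad p_s=\del_\tau\omega$, the tangential derivative of the vorticity along $\del\Omega$. Thus $p_s$ is sourced by the boundary \emph{trace} of $\omega$ alone and is insensitive to how $\omega$ decays inward — this is precisely the ``more than one derivative'' phenomenon noted after~\eqref{eqnIpUAu}. I would therefore build a boundary-layer family: in boundary-normal coordinates $(s,\rho)$ (arclength $s$, distance $\rho=\operatorname{dist}(\cdot,\del\Omega)$) take $u$ oscillating like $\sin(ks),\cos(ks)$ at a large frequency $k$, with $\rho$-profiles chosen so that (i) the boundary vorticity $\omega|_{\rho=0}$ has order-one amplitude $\alpha$, forcing $p_s\sim\alpha e^{-k\rho}\cos(ks)$ of order one, concentrated in the collar of width $1/k$; (ii) $\omega$ itself lives in a much thinner sublayer of width $\delta\ll1/k$, so that $\norm{\omega}_{L^2}^2\sim\alpha^2\delta$ is tiny even though its trace is order one; and (iii) $\divergence u$ carries the same $e^{-k\rho}\cos(ks)$ shape as $p_s$ but a small amplitude $\beta$ of suitable sign, making the cross term positively correlated. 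Because $u=0$ on $\del\Omega$ pins down both velocity components (one cannot freely prescribe $\divergence u$ and $\curl u$ together), I would prescribe the two $\rho$-profiles of $u$ directly, e.g.\ $u_1=A(\rho)\sin(ks)$, $u_2=B(\rho)\cos(ks)$ with $A(0)=B(0)=0$, and read off $\divergence u$, $\omega$, and $p_s$; taking $A$ with a spike in $A'$ of height $\alpha$ and width $\delta$ and $B$ spread over scale $1/k$ with amplitude $\beta$ realizes (i)--(iii).

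A scaling count then gives $\int_\Omega p_s\,\divergence u\sim c_3\,\alpha\beta/k$, $\norm{\divergence u}_{L^2}^2\sim c_1\beta^2/k$, and $\norm{\omega}_{L^2}^2\sim c_2\beta^2/k+c_4\alpha^2\delta$. Normalizing $\alpha=1$, the required inequality reduces (after multiplying by $k$) to
\[
c_3\beta\;\ge\;\bigl((1-\epsilon+C)c_1+(1-\epsilon)c_2\bigr)\beta^2+(1-\epsilon)c_4\,k\delta,
\]
which holds once $\beta$ is chosen small depending on $C$ (the cross term is linear in $\beta$, the penalties quadratic) and $k\delta$ is taken sufficiently small. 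I would carry out the flat half-plane computation first, where the profiles obey a linear ODE system for $(A,B)$, $p_s$ is explicit, and the constants $c_i$ are checked; since $u$ can be cut off in the collar, smoothness and the boundary condition are automatic.

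The main obstacle is the transfer to a general simply connected $C^3$ domain, where $p_s$ is a \emph{global} harmonic function and must be shown to coincide, to leading order, with the localized boundary-layer profile $\alpha e^{-k\rho}\cos(ks)$. I would use Fermi coordinates (valid and $C^2$ because $\del\Omega\in C^3$, with metric $(1-\kappa\rho)^2\,ds^2+d\rho^2$) and take $k$ large, so that all three quantities above concentrate in a collar of width $1/k$ where the geometry is nearly flat: the curvature $\kappa$ enters only through relative corrections of size $O(\kappa/k)$. Since the leading terms each scale like $1/k$ with a fixed positive margin, these $O(1/k^2)$ corrections cannot overturn the inequality for $k$ large. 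Establishing that $p_s$ is indeed this boundary-layer harmonic — via a parametrix for the oscillatory Neumann problem, or equivalently through the commutator representation~\eqref{eqnPCommutator} — is the technically delicate point, and is exactly where the $C^3$ hypothesis is used.
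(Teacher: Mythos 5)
Your reduction is correct, and it is in fact the same quantity the paper works with: since $u=0$ on $\del\Omega$, the cross term $\int_\Omega p_s\,\divergence u$ equals $\int_\Omega q_s\,\omega$, where $q_s$ is the harmonic conjugate of $p_s$; your observation that the Neumann data of $p_s$ is $\del_\tau\omega$ is exactly the paper's identification of $q_s$ as the harmonic extension of the boundary trace of the vorticity. The splitting $\norm{\grad u}_{L^2}^2=\norm{\divergence u}_{L^2}^2+\norm{\curl u}_{L^2}^2$ for $u\in H^1_0$ is also fine, and your scaling count (cross term $\sim\alpha\beta/k$, penalties $\sim\beta^2/k$ and $\alpha^2\delta$, then $\beta$ small depending on $C$ and $k\delta\ll\beta$) does close formally.

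The genuine gap is the step you yourself flag and defer: proving that the actual Stokes pressure --- a \emph{global} harmonic function determined by a Neumann problem on all of $\Omega$ --- agrees with the flat half-plane layer $\pm\alpha e^{-k\rho}\cos(ks)$ up to errors that are negligible in the pairing against $\divergence u$. This is not a routine elliptic estimate, and the reason is precisely the phenomenon the proposition is about. The natural argument (curvature and cutoff errors in the Neumann data are $O(\kappa/k)$ relatively in amplitude, hence small in $H^{-1/2}(\del\Omega)$; apply the energy estimate for the Neumann problem and Poincar\'e) yields $\norm{p_s-\tilde p}_{L^2}\lesssim C_\Omega\,\alpha\kappa\,k^{-1/2}$, which is the \emph{same} order as $\norm{\tilde p}_{L^2}\sim\alpha k^{-1/2}$: the relative error is $O(1)$, not $o(1)$, because Poincar\'e and the trace estimate are lossy for frequency-$k$ layer functions. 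To get the claimed $O(\kappa/k)$ relative error you must exploit the oscillation a second time --- i.e., build a genuine two-scale parametrix (the first curvature corrector is $\tfrac{\kappa}{2}\rho e^{-k\rho}\cos(ks)$, etc.) and estimate its defect in norms adapted to the oscillation. Since your final inequality lives at order $\beta/k$ with $\beta$ already small depending on $C$, any $O(1)$ relative loss in the cross term destroys the argument. As written, the proposal is therefore a plausible program whose central estimate is asserted rather than proved.

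It is worth seeing how the paper sidesteps this entirely. Writing $u=\gradperp\psi+\grad p$ with $\psi=0$ and $\del p/\del\nu=0$ on $\del\Omega$, Green's identity together with $\lap q_s=0$ and $q_s=\omega=\lap\psi$ on $\del\Omega$ collapses the cross term to a pure boundary integral,
\[
\int_\Omega p_s\,\divergence u \;=\; \int_\Omega q_s\,\omega \;=\; \int_{\del\Omega}\frac{\del\psi}{\del\nu}\,\lap\psi\,d\sigma,
\]
so no interior knowledge of $p_s$ is ever required. The paper then uses a product ansatz $p=\alpha(s)\beta(r)$, $\psi=\alpha'(s)\gamma(r)$ with a \emph{fixed} tangential profile and makes the boundary integral large by taking $\gamma''(0)\to\infty$ while $\norm{\gamma}_{H^2}$ stays bounded; no large frequency $k$ and no parametrix appear. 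Your construction can be repaired along the same lines: with $\del p/\del\nu=0$ your cross term becomes $\pm\int_{\del\Omega}\omega\,\del_\tau p\,d\sigma$, a boundary integral, at which point the high-frequency layer and the delicate approximation of $p_s$ become unnecessary.
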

The key idea in the proof is to identify the \emph{harmonic conjugate of the Stokes pressure as the harmonic extension of the vorticity}. Since this is independent of our main focus, we present the proof of Proposition~\ref{ppnNonPositivity} in Appendix~\ref{sxnNonPositivity}, towards the end of this paper. We remark, however, that if $u \in H^2\cap H$, where
$$
H \defeq \{ v \in L^2(\Omega) \suchthat v = Pv \} = \{v \in L^2(\Omega) \suchthat \divergence v = 0, \text{ and } v \cdot \nu = 0 \text{ on } \del \Omega \},
$$
then the second equality in~\eqref{eqnAdef} shows that the extended
Stokes operator $A$ reduces to the standard Stokes operator $-P\lap$. 
In the space
$H^2\cap H^1_0 \cap H$
coercivity of the standard Stokes operator is well known. 
Namely
\eqref{eqnPeriodicStokesCoercivity} holds
for all $u \in H^2\cap H^1_0 \cap H$ 
(see for instance~\cite{bblConstFoias}*{Chapter 4}). 
Unfortunately, when we
consider vector fields for which $u \notin H$, Proposition~\ref{ppnNonPositivity} shows that coercivity fails for the extended Stokes operator.

The key to global existence results for the nonlinear system~\eqsysENS is to remedy the negative results in Proposition~\ref{ppnNonPositivity}
in a manner that interacts well with the nonlinear term. This can be
done by introducing a stabilizing higher order term, and a compensating
gradient projection term, as we now describe.

For any $u \in H^1(\Omega)$ define $Q(u)$, the primitive of the gradient projection, to be the unique mean zero $H^1$ function such that
$$
\grad Q(u) = (I - P) u.
$$
Given constants $\epsilon, C > 0$, we define an $H^1$-equivalent inner product $\altip{\cdot}{\cdot}_{\epsilon, C}$ by
\begin{equation}\label{d:altip}
\altip{u}{v}_{\epsilon, C} = \ip{u}{v} + \epsilon \ip{\grad u}{\grad v} + C \ip{Q(u)}{Q(v)},
\end{equation}
where $\ip{\cdot}{\cdot}$ denotes the standard inner product on $L^2(\Omega)$. Our main result shows that for all $\epsilon$ sufficiently small, we can find $C$ large enough to ensure coercivity of $A$ under the inner product~$\altip{\cdot}{\cdot}_{\epsilon, C}$.
\begin{proposition}[$H^1$-equivalent coercivity]\label{ppnAdjustedIP}
Let $\Omega\subset\R^d$ be a $C^3$ domain. There exists positive constants $\epsilon_0 = \epsilon_0(\Omega)$ and $c = c(\Omega)$ such that for any $\epsilon \in (0, \epsilon_0)$, there exists a constant $C_\epsilon = C_\epsilon(\Omega) > 0$, such that for the inner product $\ip{\cdot}{\cdot}_\epsilon$ defined by
$$
\ip{\cdot}{\cdot}_\epsilon \defeq \altip{\cdot}{\cdot}_{\epsilon, C_\epsilon},
$$
we have
\begin{equation}\label{eqnCoercivicity}
\ip{u}{Au}_{\epsilon} \geq \frac{1}{c} \left( \norm{\grad u}_{L^2}^2 + \epsilon \norm{\lap u}_{L^2}^2 + C_\epsilon \norm{\grad q}_{L^2}^2\right)
\end{equation}
for all $u \in H^2 \cap H^1_0$. Consequently, there exists a constant $C_\epsilon' = C_\epsilon'(\epsilon, \Omega)$ such that
\begin{equation}\label{eqnCoercivicity2}
\ip{u}{Au}_\epsilon \geq \frac{1}{c} \ip{u}{u}_\epsilon,
\quad\text{and}\quad
\ip{u}{Au}_\epsilon \geq \frac{1}{C_\epsilon'} \ip{\grad u}{\grad u}_\epsilon,
\end{equation}
for all $u \in H^2 \cap H^1_0$.
\end{proposition}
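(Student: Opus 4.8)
The plan is to expand $\ip{u}{Au}_\epsilon = \altip{u}{Au}_{\epsilon,C_\epsilon}$ into its three constituent pieces, show that two of them reproduce the positive quantities on the right of \eqref{eqnCoercivicity} while the remaining ``off-diagonal'' terms are absorbed using the commutator estimate of Theorem~\ref{thmCommutatorEstimate}. Writing $q = Q(u)$, so that $\grad q = (I-P)u$, I would first record the two pieces that evaluate cleanly. Since $u \in H^1_0$ and $Pu$ is $L^2$-orthogonal to the gradient $\grad p_s$, integration by parts gives $\ip{u}{Au} = \norm{\grad u}_{L^2}^2 + \ip{\grad q}{\grad p_s}$. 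For the projection piece I would use $(I-P)Au = -\grad\divergence u$ together with \eqref{eqnGradDivId} to identify $Q(Au) = -\divergence u$ (both mean zero), and $\lap q = \divergence u$ with $\del_\nu q = \nu\cdot(I-P)u = 0$ on $\del\Omega$; hence $C_\epsilon \ip{Q(u)}{Q(Au)} = -C_\epsilon\ip{q}{\lap q} = C_\epsilon \norm{\grad q}_{L^2}^2$, with no boundary contribution. These already account for the $\norm{\grad u}_{L^2}^2$ and $C_\epsilon\norm{\grad q}_{L^2}^2$ terms in \eqref{eqnCoercivicity}.

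The remaining, and most delicate, piece is $\epsilon\ip{\grad u}{\grad Au}$. I would establish \eqref{eqnCoercivicity} first for smooth $u$, extending to $u \in H^2\cap H^1_0$ by density. Integrating by parts gives $\ip{\grad u}{\grad Au} = -\ip{\lap u}{Au} + \int_{\del\Omega}\del_\nu u\cdot Au$, and inserting $Au = -\lap u + \grad p_s$ produces $\norm{\lap u}_{L^2}^2 - \ip{\lap u}{\grad p_s}$ plus boundary terms. \emph{Controlling the boundary terms is the main obstacle.} They do not vanish, but because $u=0$ on $\del\Omega$ forces $\grad u = \nu\otimes\del_\nu u$ there, the tangential second derivatives of $u$ drop out and what remains is governed by the curvature of $\del\Omega$. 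Using the $C^3$-regularity of $\Omega$ and the trace/interpolation bound $\norm{\del_\nu u}_{L^2(\del\Omega)}^2 \le C_\Omega\norm{\grad u}_{L^2}\norm{\lap u}_{L^2}$, the boundary contribution is bounded by $\epsilon C_\Omega \norm{\grad u}_{L^2}\norm{\lap u}_{L^2}$, which Young's inequality splits into a small multiple of $\epsilon\norm{\lap u}_{L^2}^2$ plus a multiple of $\norm{\grad u}_{L^2}^2$. Thus this piece contributes $\epsilon\norm{\lap u}_{L^2}^2 - \epsilon\ip{\lap u}{\grad p_s}$ up to controllable lower-order terms.

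With these computations in hand, \eqref{eqnCoercivicity} reduces to absorbing the two indefinite cross terms $\ip{\grad q}{\grad p_s}$ and $-\epsilon\ip{\lap u}{\grad p_s}$, and here the commutator estimate of Theorem~\ref{thmCommutatorEstimate}, in the form $\norm{\grad p_s}_{L^2} \le \beta\norm{\lap u}_{L^2} + C_\Omega\norm{\grad u}_{L^2}$ with $\beta = \beta(\Omega) < 1$ (cf.~\eqref{eqnGradPsBound}), is decisive. By Cauchy--Schwarz and Young, $-\epsilon\ip{\lap u}{\grad p_s} \ge -\epsilon\norm{\lap u}_{L^2}(\beta\norm{\lap u}_{L^2} + C_\Omega\norm{\grad u}_{L^2})$, so the leading term survives with coefficient $\epsilon(1-\beta)>0$: it is precisely the strict inequality $\beta<1$ that makes this work, while the cross term $\epsilon\norm{\lap u}_{L^2}\norm{\grad u}_{L^2}$ is split into small multiples of $\epsilon\norm{\lap u}_{L^2}^2$ and $\norm{\grad u}_{L^2}^2$. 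Likewise $|\ip{\grad q}{\grad p_s}| \le \tfrac{C_\epsilon}{2}\norm{\grad q}_{L^2}^2 + \tfrac{1}{2C_\epsilon}\norm{\grad p_s}_{L^2}^2$, and $\tfrac{1}{2C_\epsilon}\norm{\grad p_s}_{L^2}^2 \le C_\epsilon^{-1}(\beta^2\norm{\lap u}_{L^2}^2 + C_\Omega^2\norm{\grad u}_{L^2}^2)$ becomes negligible once $C_\epsilon$ is large. I would therefore fix $\epsilon < \epsilon_0$ small enough that the $\norm{\grad u}_{L^2}^2$ and $\epsilon\norm{\lap u}_{L^2}^2$ coefficients stay bounded below after all the Young splittings, and then choose $C_\epsilon$ large (growing like $\epsilon^{-1}$) so that the $C_\epsilon^{-1}$ leftovers are dominated; this yields \eqref{eqnCoercivicity} with a uniform $c = c(\Omega)$.

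Finally, \eqref{eqnCoercivicity2} follows from \eqref{eqnCoercivicity} by comparison of norms. Since $u\in H^1_0$, the Poincar\'e inequality bounds $\norm{u}_{L^2}^2 \le c_\Omega\norm{\grad u}_{L^2}^2$ and, for $\epsilon<1$, $\epsilon\norm{\grad u}_{L^2}^2 \le \norm{\grad u}_{L^2}^2$; hence the right side of \eqref{eqnCoercivicity} dominates a fixed multiple of $\ip{u}{u}_\epsilon = \norm{u}_{L^2}^2 + \epsilon\norm{\grad u}_{L^2}^2 + C_\epsilon\norm{\grad q}_{L^2}^2$, giving the first inequality. The second, involving the $H^1$-type quantity $\ip{\grad u}{\grad u}_\epsilon$, is immediate from \eqref{eqnCoercivicity} itself, since that quantity is comparable to $\norm{\grad u}_{L^2}^2 + \epsilon\norm{\lap u}_{L^2}^2$ together with the gradient-projection contribution, at the cost of a constant $C_\epsilon'$ absorbing the $C_\epsilon$-versus-unit weighting on the $\grad q$ term.
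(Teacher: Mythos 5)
Your handling of the first two pieces of the inner product ($\ip{u}{Au} = \norm{\grad u}_{L^2}^2 + \ip{\grad q}{\grad p_s}$ and $C_\epsilon\ip{Q(u)}{Q(Au)} = C_\epsilon\norm{\grad q}_{L^2}^2$) and your use of Theorem~\ref{thmCommutatorEstimate} to absorb the cross terms coincide with the paper's computation. The genuine gap is exactly the step you flag as ``the main obstacle'': the boundary term $\int_{\del\Omega}\del_\nu u\cdot Au\,d\sigma$ produced by integrating $\epsilon\ip{\grad u}{\grad Au}$ by parts. The claimed bound by $C_\Omega\norm{\grad u}_{L^2}\norm{\lap u}_{L^2}$ is false. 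Your trace estimate controls only the factor $\del_\nu u$; the other factor, $Au\vert_{\del\Omega} = (-P\lap u - \grad\divergence u)\vert_{\del\Omega}$, is a trace of \emph{second} derivatives of $u$, which the interior $H^2$ norm does not control (for general $u\in H^2\cap H^1_0$ it does not even exist). It is not true that only curvature survives: writing $\lap u = \del_\nu^2 u + \kappa\,\del_\nu u + \lap_{\mathrm{tan}}u$ on $\del\Omega$, the tangential part indeed vanishes, but the second normal derivative does not, and that is precisely what $Au\vert_{\del\Omega}$ sees. Concretely, in a half-space model take $w_n = \phi(x_1)\psi_n(x_2)e_2$ with $\psi_n(0)=\psi_n'(0)=0$, $\norm{\psi_n}_{H^2}\to 0$, but $\psi_n''(0)\to\infty$ (a boundary spike, e.g.\ $\psi_n(x_2)=a_n\eta(M_nx_2)$ with $a_nM_n^{3/2}\to0$, $a_nM_n^{2}\to\infty$). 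One checks $\grad p_s(w_n)=0$, $\del_\nu w_n\vert_{\del\Omega}=0$, while $Aw_n\vert_{\del\Omega} = -(0,\phi\,\psi_n''(0))$ blows up; pairing against a fixed smooth $v\in H^1_0$ with $\int(\del_\nu v)_2\,\phi\neq 0$ shows (after polarization) that no bound of your form, with any constants, can hold. For the same reason your closing density step fails: the literal form $u\mapsto\ip{\grad u}{\grad Au}$ is \emph{not} continuous under $H^2$ convergence --- the uncontrolled boundary term is exactly the discontinuity --- so an inequality proved for smooth $u$ does not pass to the limit.

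The paper does not estimate this boundary term; it arranges for it to vanish. In Lemma~\ref{lmaAdjustedIP} coercivity is proved only for $u$ with the extra property $Au\in H^1_0$, i.e.\ $u\in D(A^2)$, so the integration by parts in \eqref{eqnIp2} has no boundary contribution at all. The cost is the operator-theoretic work in Lemmas~\ref{lmaAinv} and~\ref{lmaDensityOfDA2}: one first shows $A$ is invertible with $\norm{Au}_{L^2}$ equivalent to $\norm{u}_{H^2}$, and deduces that $D(A^2)$ is dense in $D(A)$ in the $H^2$ topology. The inequality then passes to the limit because, once the $H^1$ term has been rewritten as $-\ip{\lap u_n}{Au_n}$ on $D(A^2)$, every quantity on both sides is continuous under $H^2$ convergence of $u_n$ (which gives $L^2$ convergence of $Au_n$). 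If you want to repair your argument, this is the essential modification: the approximating sequence must satisfy the extra boundary condition $Au_n = 0$ on $\del\Omega$, not merely be smooth, and producing such a sequence is exactly what the invertibility and density lemmas are for.
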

We prove this Proposition in Section~\ref{sxnAdjustedIP}. 
The main ingredient in the proof is an estimate 
for the Laplace-Leray commutator \eqref{eqnPCommutator}
that is proved in~\cite{bblLiuLiuPego} 
and stated in Theorem~\ref{thmCommutatorEstimate} below. 
A couple of further consequences of this Theorem are worth mentioning
here.  First, $A$ is invertible on $L^2$ with compact resolvent
(Lemma~\ref{lmaAinv}).  And, due to
Theorem~\ref{thmCommutatorEstimate} and the self-adjointness of the
Laplacian, an elementary result about sectorial
operators~\cite{bblHenry}*{Theorem 1.3.2} directly implies that $A$ is a
sectorial operator on $L^2$ with domain $D(A)=D(-\lap)=H^2\cap H^1_0$. 

The result of Proposition~\ref{ppnAdjustedIP} raises the question of
whether coercivity of $A$ can be obtained in a space with less
regularity than $H^1$ by using an equivalent inner product.  
In this regard we have two remarks.
First, in Proposition~\ref{ppnAdjustedIPPrime} we will describe an inner product
$\ip{\cdot}{\cdot}_\epsilon'$ for which $A$ is coercive
that is equivalent to the usual inner product on 
the space
$$
\hdiv = \{v \in L^2(\Omega) \suchthat 
\divergence v \in L^2 \text{ and } v \cdot \nu = 0 \text{ on } \del \Omega \}.
$$
Second, we expect that a bilinear form defined by
\begin{equation}
\ip{u}{v}''_\epsilon \defeq \ip{ A^{-1/2} u}{ A^{-1/2} v}_\epsilon
\end{equation}
determines an $L^2$-equivalent inner product under which $A$ is
coercive. Coercivity for $u\in D(A)$ would follow from 
Proposition~\ref{ppnAdjustedIP}, and $L^2$ continuity by well-known
interpolation estimates. However, an $L^2$-coercivity bound 
$\ip{u}{u}_\epsilon''\geq c\norm{u}_{L^2}^2$ appears not to be
easy to prove --- it may involve proving $A$ has bounded
imaginary powers (see \cite{bblAbels}) in order to establish the expected
characterization $D(A^{1/2})=H^1_0$.

In any case, unfortunately the
inner products $\ip{\cdot}{\cdot}'_\epsilon$ and
$\ip{\cdot}{\cdot}''_\epsilon$ do not seem to interact well with the
nonlinearity in~\eqref{eqnNS}. Thus for questions of global existence
and stability for the nonlinear extended Navier-Stokes equations and
their discretizations, it is more convenient to use the inner product in
Proposition~\ref{ppnAdjustedIP}. The rest of the paper can be read
independently of Proposition~\ref{ppnAdjustedIPPrime} or its proof.

\subsection{Energy decay for the extended Stokes equations.}
A first step to global existence results for~\eqsysENS, is the study of long time behaviour for the underlying linear equations. These are the extended Stokes equations:
\begin{equation}\label{eqnExtendedStokes}
\left\{
\begin{aligned}
\del_t u - \lap u + \grad p_s(u) &= 0 &&\text{in }\Omega,\\
u(x,t) &= 0 &&\text{for }x \in \del \Omega, t > 0,\\
u(x, 0) &= u_0(x) && \text{for } x\in \Omega.
\end{aligned}\right.
\end{equation}
A direct consequence of Proposition~\ref{ppnNonPositivity} is that the energy of solutions to~\eqref{eqnExtendedStokes} can increase, at least initially.
\begin{corollary}\label{clyL2Increase}
There exists $u_0 \in C^2(\Omega)$ with $u_0 = 0$ on $\del \Omega$, and $t_0 > 0$ such that the solution $u$ to~\eqref{eqnExtendedStokes} with initial data $u_0$ satisfies
$$
\norm{u(t_0)}_{L^2} > \norm{u_0}_{L^2}.
$$
\end{corollary}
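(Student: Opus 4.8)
The plan is to read off the sign of the energy's time derivative directly from Proposition~\ref{ppnNonPositivity}. Writing the extended Stokes equation~\eqref{eqnExtendedStokes} in the form $\delt u = -Au$ and pairing with $u$ in $L^2(\Omega)$, one has the energy identity
\begin{equation}
\frac{1}{2}\frac{d}{dt}\norm{u(t)}_{L^2}^2 = \ip{u}{\delt u} = -\ip{u}{Au}.
\end{equation}
Consequently it suffices to exhibit a nonzero initial datum $u_0 \in C^2(\bar\Omega)\cap H^1_0$ for which $\ip{u_0}{Au_0} < 0$: the energy then strictly increases at $t=0$, and integrating over a short time interval produces $t_0>0$ with $\norm{u(t_0)}_{L^2}>\norm{u_0}_{L^2}$.

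To build such a datum, I would fix a bounded, simply connected $C^3$ domain $\Omega\subset\R^2$ and apply Proposition~\ref{ppnNonPositivity} with $\epsilon=0$ and $C=1$. This yields a field $u_0\in C^2(\bar\Omega)$ vanishing on $\del\Omega$ with
\begin{equation}
\ip{u_0}{Au_0} \leq -\norm{\divergence u_0}_{L^2}^2,
\end{equation}
and the construction underlying the Proposition provides $u_0$ as a nonzero field (the estimate being vacuous for $u_0=0$). The inequality is in fact strict. Indeed, if $\divergence u_0\equiv 0$, then since also $u_0\cdot\nu=0$ on $\del\Omega$ we have $u_0\in H$, so by the second equality in~\eqref{eqnAdef} the extended Stokes operator reduces to the standard Stokes operator and $\ip{u_0}{Au_0}=\norm{\grad u_0}_{L^2}^2>0$ for $u_0\neq 0$, contradicting the displayed bound. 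Hence $\norm{\divergence u_0}_{L^2}>0$ and $\ip{u_0}{Au_0}<0$.

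It remains to pass from the instantaneous inequality to a genuine increase of $\norm{u(t)}_{L^2}$. Since $A$ is sectorial on $L^2$ with domain $H^2\cap H^1_0$ (as noted after Proposition~\ref{ppnAdjustedIP}), the solution $u(t)=e^{-tA}u_0$ lies in $C([0,\infty);H^2\cap H^1_0)\cap C^1([0,\infty);L^2)$, so $g(t)\defeq -\ip{u(t)}{Au(t)}$ is continuous with $g(0)=-\ip{u_0}{Au_0}>0$. Thus $g>0$ on some interval $[0,\delta)$, and the energy identity integrates to
\begin{equation}
\frac{1}{2}\norm{u(t_0)}_{L^2}^2 - \frac{1}{2}\norm{u_0}_{L^2}^2 = \int_0^{t_0} g(t)\,dt > 0
\end{equation}
for any $t_0\in(0,\delta)$, which is the desired conclusion. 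The argument is short once Proposition~\ref{ppnNonPositivity} is available; the only point requiring attention is the strictness step of the previous paragraph (ensuring $\divergence u_0\neq 0$), since the Proposition as stated asserts only a non-strict inequality.
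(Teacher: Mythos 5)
Your proof is correct and follows essentially the same route as the paper's: both use the energy identity $\tfrac12\del_t\norm{u}_{L^2}^2=-\ip{u}{Au}$, invoke Proposition~\ref{ppnNonPositivity} to produce initial data with $\ip{u_0}{Au_0}<0$, and conclude by continuity in time over a short interval. Your additional step establishing strictness (ruling out $\divergence u_0=0$ via the reduction of $A$ to the standard Stokes operator on $H$) carefully patches a point the paper's proof leaves implicit, but the argument is otherwise identical.
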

The proof of Corollary~\ref{clyL2Increase} can be found at the end of Appendix~\ref{sxnNonPositivity}, following the proof of Proposition~\ref{ppnAdjustedIP}.

In contrast to the extended Stokes equations, solutions to the standard
Stokes equations (with initial data in $H$) always have monotonically decaying $L^2$ norm. This follows because if $u(t) \in H$, then multiplication by $u$ and integration by parts produces the standard energy inequality
\begin{equation}\label{eqnEnergyEqualityStokes}
\frac{1}{2} \del_t \norm{u(t)}_{L^2}^2 + \norm{\grad u}_{L^2}^2 = 0.
\end{equation}
The Poincar\'e inequality now yields strict exponential decay
\begin{equation}\label{eqnStokesExpDecay}
\norm{u(t)}_{L^2}^2 \leq e^{-c t} \norm{u_0}_{L^2}^2.
\end{equation}
for all solutions to the standard Stokes equations with initial data in $H$.

Despite the counter-intuitive initial energy increase, the extended Stokes system is a well-posed, \emph{non-degenerate} parabolic system. This was proved in~\cites{bblGrubbSolonnikov2,bblLiuLiuPego}, and is a direct consequence of Theorem~\ref{thmCommutatorEstimate}.
Indeed, since $A$ is sectorial it generates an analytic semigroup $e^{-At}$, showing well-posedness of the initial-boundary-value problem~\eqref{eqnExtendedStokes}.  Because no eigenvalue of $A$ has non-positive real part
by Proposition~\ref{ppnAdjustedIP},
one can quickly show that while the $L^2$ energy of
solutions to~\eqref{eqnExtendedStokes} can increase initially, it must
eventually decay exponentially. Explicitly, this means that solutions
to~\eqref{eqnExtendedStokes} must satisfy
\begin{equation}\label{eqnExtendedStokesExpDecay}
\norm{u(t)}_{L^2}^2 \leq C e^{-c t} \norm{u_0}_{L^2}^2
\end{equation}
for some constants $C, c > 0$.

To digress briefly, we remark that with a little work, one can
explicitly characterize the spectrum of $A$. Indeed, if $A_S$ denotes the (standard) Stokes operator with no-slip boundary conditions, and $\lap_N$ denotes the Laplace operator with homogeneous Neumann boundary conditions, then
$$
\sigma(A) = \sigma(A_S) \cup \sigma(-\lap_N) \setminus \{0\}.
$$
Seeing $\sigma(A)$ is contained in the right hand side above is immediate. The reverse inclusion requires a little work, and was communicated to us by Kelliher~\cite{bblKelliher}.

Unfortunately, an abstract spectral-theoretic proof of~\eqref{eqnExtendedStokesExpDecay} is not of direct help for studying the stability of time-discrete schemes, which was a primary motivation for introducing these equations. Further, \eqref{eqnExtendedStokesExpDecay} does {not} recover~\eqref{eqnEnergyEqualityStokes} for solutions with initial data in $H$. For this reason, we search for a direct energy-method proof of~\eqref{eqnExtendedStokesExpDecay}, and for an idea which also allows the study of time discrete schemes.

Observe first that if we multiply~\eqref{eqnExtendedStokes} by~$u$,
integrate, use the commutator estimate~\eqref{eqnGradPsBound} and
Gronwall's lemma, we obtain exponential \emph{growth}, not decay, of
$\norm{u}_{L^2}^2$. If we involve a higher derivative, coercivity of $A$
in Proposition~\ref{ppnAdjustedIP} (or
Proposition~\ref{ppnAdjustedIPPrime}) and Gronwall's lemma guarantee
eventual exponential decay of $\norm{u}_{H^{1}}$ (or
$\norm{u}_{\hdiv}$). However, for~\eqref{eqnExtendedStokes}, we can
obtain a more satisfactory decay estimate by considering non-quadratic form energies.
\begin{proposition}\label{ppnStokesEnergyDecay2}
Let $u$ be a solution to~\eqref{eqnExtendedStokes} with $u_0 \in H^1(\Omega)$. Then for any $\epsilon > 0$, there exists constants $c_1 = c_1(\Omega)$ and $c_2 = c_2( \Omega, \epsilon)$, such that $c_1, c_2 > 0$ and
\begin{equation}\label{eqnNonLinearEnergyDecay}
\del_t \E_{c_1, c_2}(u) + \mathcal E'_\epsilon(u) \leq 0
\end{equation}
where $\mathcal E_{c_1, c_2}$ and $\mathcal E'_\epsilon$ are defined by
\begin{gather}
\label{eqnNLEdef} \E_{c_1, c_2}(u) \defeq \norm{u}_{L^2}^2 + c_1 \norm{\grad u}_{L^2} \norm{\grad Q(u)}_{L^2} + c_2 \norm{\grad Q(u)}_{L^2}^2,\\
\label{eqnNLEPrimeDef} \mathcal E'_\epsilon(u) \defeq (2 - \epsilon) \norm{\grad u}_{L^2}^2 + \norm{\lap u}_{L^2} \norm{\grad Q(u)}_{L^2} + \norm{\lap Q(u)}_{L^2}^2.
\end{gather}
\end{proposition}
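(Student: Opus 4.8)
The plan is to test the equation against $u$ and against $-\lap u$, to differentiate the auxiliary quantity $\norm{\grad Q(u)}_{L^2}$, and to combine the three resulting computations using the Laplace--Leray commutator estimate of Theorem~\ref{thmCommutatorEstimate}. Writing $p_s = p_s(u)$ and $Q = Q(u)$, the boundary conditions $u = \del_t u = 0$ on $\del\Omega$ give, after integration by parts,
\[
\tfrac12\del_t\norm{u}_{L^2}^2 = -\norm{\grad u}_{L^2}^2 - \ip{u}{\grad p_s}, \qquad \tfrac12\del_t\norm{\grad u}_{L^2}^2 = -\norm{\lap u}_{L^2}^2 + \ip{\lap u}{\grad p_s}.
\]
Since $\grad p_s\in\mathrm{Range}(I-P)$ is a pure gradient while $Pu$ is divergence-free and tangential, one has $\ip{u}{\grad p_s} = \ip{(I-P)u}{\grad p_s} = \ip{\grad Q}{\grad p_s}$. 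The crucial structural observation is that $\grad Q = (I-P)u$ solves the heat equation: applying $I-P$ to~\eqref{eqnExtendedStokes} and using $\grad\divergence u = \lap(I-P)u$ from~\eqref{eqnGradDivId} yields $\del_t\grad Q = \grad\divergence u = \lap\grad Q$. Because $u=0$ on $\del\Omega$ forces $\del_\nu Q = (I-P)u\cdot\nu = 0$, a further integration by parts gives the clean identity $\tfrac12\del_t\norm{\grad Q}_{L^2}^2 = -\norm{\lap Q}_{L^2}^2$; along the way I would also record the Poincar\'e-type bound $\norm{\grad Q}_{L^2}\le C_P\norm{\lap Q}_{L^2}$, which follows from $\del_\nu Q=0$ and the mean-zero normalization of $Q$.

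Next I would differentiate the non-quadratic cross term. Abbreviating $G = \norm{\grad u}_{L^2}$, $R = \norm{\grad Q}_{L^2}$, $L=\norm{\lap u}_{L^2}$, $M=\norm{\lap Q}_{L^2}$, the product rule together with the identities above gives
\[
\del_t(GR) = \frac{(-L^2+\ip{\lap u}{\grad p_s})R}{G} - \frac{M^2 G}{R}.
\]
The reason for using a product of norms rather than an inner product is precisely that this produces the favourable negative term $-L^2R/G$, which has no counterpart in a quadratic energy. I would then invoke Theorem~\ref{thmCommutatorEstimate}, fixing $\delta$ small enough that $\norm{\grad p_s}_{L^2}\le aL+bG$ with $a=\sqrt{\tfrac12+\delta}<1$ and $b=b(\Omega)$, and use Cauchy--Schwarz to bound $\ip{\grad Q}{\grad p_s}\le R(aL+bG)$ and $\ip{\lap u}{\grad p_s}\le L(aL+bG)$.

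Assembling $\del_t\E_{c_1,c_2}+\mathcal E'_\epsilon$ and cancelling $-2G^2$ against $(2-\epsilon)G^2$, and $-2c_2M^2$ against $M^2$, every remaining positive contribution is a multiple of $LR$ or of $GR$, set against the single negative term $-c_1(1-a)L^2R/G$. The heart of the argument is to control the borderline indefinite term $LR$: writing $s=L/G$, a combination $\alpha LR - c_1(1-a)L^2R/G = GR\,(\alpha s - c_1(1-a)s^2)$ is bounded above by the maximum of a downward parabola, hence by $C_*GR$ with $C_*=C_*(\Omega,c_1)$. Converting $R\le C_PM$ and applying Young's inequality, every surviving term becomes $\le\tfrac\epsilon2 G^2 + c(\epsilon)M^2$, while the genuinely negative term $-c_1M^2G/R$ is simply discarded. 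Choosing $c_1=c_1(\Omega)$ first (any positive value works) and then $c_2=c_2(\Omega,\epsilon)$ large enough to render the coefficient of $M^2$ nonpositive leaves $\del_t\E_{c_1,c_2}+\mathcal E'_\epsilon\le-\tfrac\epsilon2 G^2\le 0$, which is the claim.

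I expect the main obstacle to be exactly this control of the cross error $\norm{\lap u}_{L^2}\norm{\grad Q}_{L^2}$: it is of the same order as genuine dissipation and of indefinite sign, so it cannot be absorbed by Young's inequality into $\norm{\lap u}_{L^2}^2$ and $\norm{\grad Q}_{L^2}^2$ alone. The non-quadratic cross term in $\E_{c_1,c_2}$ is engineered so that its time derivative supplies the ratio term $L^2R/G$ needed to dominate it through the parabola estimate; this is what makes the coefficient $2-\epsilon$ (rather than $2$) unavoidable, the lost $\epsilon$ being the room used to absorb the $GR$ terms. The minor technical points I would address are the parabolic smoothing that places $u(t)\in H^2\cap H^1_0$ for $t>0$, so that the computations are licit for merely $H^1$ data, and the positivity of $G$ and $R$ required to differentiate the product of norms, which I would handle by restricting to time intervals on which these do not vanish, the degenerate cases $u\in H$ and $u\equiv 0$ being immediate.
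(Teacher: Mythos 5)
Your proposal is correct and is essentially the paper's own argument: the same three balances (testing with $u$, testing with $-\lap u$, and the Neumann heat equation satisfied by $Q(u)$), the same key lemma (Theorem~\ref{thmCommutatorEstimate}), and the same central device of building the non-quadratic cross term $c_1\norm{\grad u}_{L^2}\norm{\grad Q(u)}_{L^2}$ into the energy so that its time derivative supplies the dissipation dominating the indefinite term $\norm{\lap u}_{L^2}\norm{\grad Q(u)}_{L^2}$, with the leftover $GR$-type terms absorbed via Young, the Poincar\'e inequality for $Q$, and a large choice of $c_2$. The only real deviation is in one step --- you retain the ratio term $L^2R/G$ and bound $\alpha LR - c_1(1-a)L^2R/G$ by maximizing a downward parabola in $s=L/G$ (so that any $c_1>0$ works), whereas the paper first linearizes $L^2/G \gtrsim L$ using the Poincar\'e inequality $\norm{\grad u}_{L^2}\leq C\norm{\lap u}_{L^2}$ and then takes $c_1$ large enough to force an exact cancellation --- a cosmetic variation (and both routes share the technicality, which you flag and the paper does not, of dividing by possibly vanishing norms).
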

The proof of Proposition~\ref{ppnStokesEnergyDecay2} is in Section~\ref{sxnExtStokesExpDecay}. While~\eqref{eqnNonLinearEnergyDecay} does not imply eventual exponential decay controlled only by the $L^2$ norm as in~\eqref{eqnExtendedStokesExpDecay}, it does provide an estimate that reduces to the energy inequality for extended Stokes equations~\eqref{eqnEnergyEqualityStokes} when the initial data is in $H$. To see this, note that if $u_0 \in  H$, then $(I-P) u(t) = 0$ for all $t > 0$ because $\divergence u$ satisfies the heat equation~\eqref{eqnHeat}. Consequently $Q(u) \equiv 0$, and equation~\eqref{eqnNonLinearEnergyDecay} reduces to
$$
\del_t \norm{u}_{L^2}^2 + (2 - \epsilon) \norm{\grad u}_{L^2}^2 \leq 0.
$$
Thus in the limit $\epsilon \to 0$, we naturally recover the energy decay for the Stokes equation (equation~\eqref{eqnEnergyEqualityStokes}) for initial data in $H$.

We also notice that the `energy' $\mathcal E_{c_1, c_2}$ of solutions must in fact decrease exponentially. This is because $\int_\Omega \grad u = 0 = \int_\Omega Q(u)$, and so the Poincar\'e inequality can be applied to both the terms $\norm{\grad u}_{L^2}$ and $\norm{Q(u)}_{L^2}$. Thus equation~\eqref{eqnNonLinearEnergyDecay} immediately implies
$$
\mathcal E_{c_1,c_2}(u(t)) \leq e^{-c t} \mathcal E_{c_1, c_2}(u_0),
$$
for some small constant $c = c(c_1, c_2, \epsilon, \Omega)$. Unfortunately, however, for the extended Navier-Stokes equations, the `energy' $\mathcal E_{c_1, c_2}$ does not interact well with the nonlinearity.

\subsection{Uniform stability for a time-discrete scheme.}
Before moving on to the non-linear system~\eqsysENS, we study stability
of a time-discrete scheme for~\eqref{eqnExtendedStokes}, 
of the type treated in \cite{bblLiuLiuPego}. One main
motivation for studying the system~\eqsysENS, or
the linear system~\eqref{eqnExtendedStokes},  is that this kind of time-discrete
scheme is naturally implicit only in the viscosity term, and explicit in
the pressure. We will show that the ideas used in the proof of
Proposition~\ref{ppnAdjustedIP} give \emph{globally uniform} stability
estimates for such time-discrete schemes.

Given an approximation $u^n$ to the velocity at time $n\, \delta t$, we determine $\grad p^n$ from the weak-form Poisson equation
\begin{equation}\label{disceq:weakpressure}
\ip{\grad p^n}{\grad \varphi}=\ip{\lap u^n-\grad\divergence u^n+f^n}{\grad\varphi} \qquad \forall\varphi\in H^1(\Omega).
\end{equation}
Now, we determine $u^{n+1}$ by solving the elliptic boundary value problem
\begin{equation}\label{disceq:weakStokes}
\begin{beqn}
\frac{u^{n+1}-u^n}{\delta t}-\lap u^{n+1}+\grad p^n =f^n & in $\Omega$,\\
u^{n+1} = 0 & on $\del \Omega$.
\end{beqn}
\end{equation}
where $f^n=\frac{1}{\delta t}\int_{n\delta t}^{(n+1)\delta t} f(s)\,ds$
is a time-discretized forcing term. 

\begin{proposition}\label{prop:disc}
Let $\Omega$ be a bounded domain in $\R^d$, $d=2,3$, with $C^3$ boundary.
Then there exist positive constants
$\kappa_0$, $\epsilon, C_\epsilon, C, C'$, depending only on $\Omega$,
such that whenever $0<\delta t<\kappa_0$,
then for all $N>0$ we have
\begin{multline}\label{eqnDiscEnergyIneq}
\norm{u^N}_{L^2}^2+ \epsilon \norm{\grad u^N}_{L^2}^2 
+C_\epsilon \norm{\grad q^N}_{L^2}^2
\mathop+\\
+\frac{1}{C} \sum_{k=0}^N \left( 
\norm{\grad u^k}_{L^2}^2 + \epsilon \norm{\lap u^k}_{L^2}^2
+ C_\epsilon \norm{\lap q^{k+1}}_{L^2}^2
\right)\delta t\\
\leq 
\norm{u^0}_{L^2}^2+ \epsilon \norm{\grad u^0}_{L^2}^2
+ C_\epsilon \norm{\grad q^0}_{L^2}^2
\\
+C\,\delta t \left( \norm{\grad u^0}_{L^2}^2 + \epsilon \norm{\lap u^0}_{L^2}^2 + \sum_{k=0}^N \|f^k\|_{L^2}^2 \right).
\end{multline}
and
\begin{multline}
\label{eqnDiscExpDecay}
\norm{u^N}_{L^2}^2+ \epsilon \norm{\grad u^N}_{L^2}^2 
+ C_\epsilon \norm{\grad q^N}_{L^2}^2
\\
\leq (1-C\delta t)^N\left( 
\norm{u^0}_{L^2}^2 + \epsilon \norm{\grad u^0}_{L^2}^2
+ C_\epsilon \norm{\grad q^0}_{L^2}^2 
\right) \\
 + C' \sum_{k=0}^{N-1}\norm{f_k}_{L^2}^2(1-C\delta t)^{N-1-k}\delta t .
\end{multline}
\end{proposition}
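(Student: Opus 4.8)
The plan is to run a discrete energy argument that mirrors the coercivity proof of Proposition~\ref{ppnAdjustedIP}, treating the explicitly evaluated pressure as a perturbation that is controlled by the smallness of $\delta t$. First, testing the weak Poisson equation~\eqref{disceq:weakpressure} against $\grad\varphi$ and comparing with the definition~\eqref{eqnPsDef} identifies the computed pressure as $\grad p^n = \grad p_s(u^n) + (I-P)f^n$. Substituting this into~\eqref{disceq:weakStokes} and writing $-\lap u^{n+1} = Au^{n+1} - \grad p_s(u^{n+1})$ via~\eqref{eqnAdef}, the linearity of $p_s$ recasts the scheme as
\[ \frac{u^{n+1}-u^n}{\delta t} + Au^{n+1} = \grad p_s(u^{n+1}-u^n) + Pf^n. \]
The term $\grad p_s(u^{n+1}-u^n)$ is precisely the error incurred by evaluating the pressure at the old time level, and everything hinges on controlling it.

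Next I would test this identity, multiplied by $\delta t$, against $u^{n+1} - \epsilon\lap u^{n+1}$, the part of the adjusted inner product~\eqref{d:altip} built from the $L^2$ and $H^1$ terms. The discrete time derivative is handled by the elementary identity $2\ip{a-b}{a} = \norm{a}^2-\norm{b}^2+\norm{a-b}^2$, which produces the telescoping energies $\norm{u^{n+1}}^2+\epsilon\norm{\grad u^{n+1}}^2$ together with the nonnegative numerical-dissipation residuals $\norm{u^{n+1}-u^n}^2$ and $\epsilon\norm{\grad(u^{n+1}-u^n)}^2$, while the implicit term $\delta t\,\ip{u^{n+1}}{Au^{n+1}}$ is bounded below by the coercivity estimate of Proposition~\ref{ppnAdjustedIP}, yielding the $\norm{\grad u^k}^2+\epsilon\norm{\lap u^k}^2$ dissipation appearing in~\eqref{eqnDiscEnergyIneq}. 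For the divergence part, taking the divergence of~\eqref{disceq:weakStokes} shows --- exactly as in the continuous identity~\eqref{eqnHeat} --- that the potential $q^{n+1}=Q(u^{n+1})$ solves a backward-Euler heat equation (forced through $f^n$); testing that relation against $-\lap q^{n+1}$ supplies the $C_\epsilon\norm{\grad q^N}^2$ energy and the $C_\epsilon\norm{\lap q^{k+1}}^2$ dissipation, and I would add $C_\epsilon$ times this estimate to the previous one.

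I expect the main obstacle to be the explicit-pressure error $\delta t\,\ip{\grad p_s(u^{n+1}-u^n)}{u^{n+1}-\epsilon\lap u^{n+1}}$. Since $\grad p_s(\cdot)$ is a gradient and $u^{n+1}$ vanishes on $\del\Omega$, integration by parts turns the leading pairing into $-\delta t\,\ip{p_s(u^{n+1}-u^n)}{\divergence u^{n+1}}$, and the Laplace--Leray commutator estimate of~\cite{bblLiuLiuPego} bounds $\norm{\grad p_s(u^{n+1}-u^n)}_{L^2}$ by essentially $\norm{\lap(u^{n+1}-u^n)}_{L^2}$ plus lower order terms. After Young's inequality, the $\delta t\,\norm{\lap u^{n+1}}^2$ and $\delta t\,\norm{\lap u^n}^2$ pieces are absorbed into the $\epsilon\norm{\lap u^k}^2\delta t$ dissipation and the $\norm{\divergence u^{n+1}}^2=\norm{\lap q^{n+1}}^2$ piece into the $C_\epsilon\norm{\lap q^{k+1}}^2\delta t$ dissipation (using that $C_\epsilon$ is large). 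This absorption closes with room to spare only when the perturbation coefficients stay strictly below the fixed dissipation constants, which is exactly what forces the threshold $\delta t<\kappa_0$, with $\kappa_0$ depending only on $\Omega$ through the commutator and coercivity constants and, crucially, independent of $N$. The forcing $\delta t\,\ip{Pf^n}{\,\cdot\,}$ is absorbed by Cauchy--Schwarz and Young, generating the $\delta t\sum\norm{f^k}^2$ terms, while the index shift in summing the increment bound $\norm{\lap(u^{n+1}-u^n)}^2\leq 2\norm{\lap u^{n+1}}^2+2\norm{\lap u^n}^2$ leaves the endpoint contribution $C\,\delta t(\norm{\grad u^0}^2+\epsilon\norm{\lap u^0}^2)$. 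Summing the resulting one-step inequality over $k=0,\dots,N$ then telescopes to~\eqref{eqnDiscEnergyIneq}.

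Finally, for the exponential-decay bound~\eqref{eqnDiscExpDecay} I would observe that $\int_\Omega \grad u^k=0$ and $\int_\Omega q^k=0$, so the Poincar\'e inequality lets the dissipation control the energy $E^n \defeq \norm{u^n}^2+\epsilon\norm{\grad u^n}^2+C_\epsilon\norm{\grad q^n}^2$ from below. The one-step estimate can then be put in the contractive form $E^{n+1}\leq (1-C\delta t)E^n + C'\delta t\,\norm{f^n}^2$ valid for all $\delta t<\kappa_0$; iterating this from $0$ to $N-1$ and summing the geometric series in the forcing produces the factor $(1-C\delta t)^N$ and the discrete Duhamel sum in~\eqref{eqnDiscExpDecay}. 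The only estimate genuinely beyond Proposition~\ref{ppnAdjustedIP} is the uniform-in-$N$ control of the explicit-pressure error, and that is precisely where the smallness of $\delta t$ is used.
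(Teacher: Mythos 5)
Your overall skeleton---identifying $\grad p^n=\grad p_s(u^n)+(I-P)f^n$, running coupled $L^2$/$H^1$/potential energy estimates, telescoping, and finishing with a contractive recurrence plus Poincar\'e to get \eqref{eqnDiscExpDecay}---matches the paper's proof. But two of your key steps contain genuine gaps, and they are exactly the two places where the argument is delicate.

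First, the absorption of the explicit-pressure error fails on constants. You bound $\delta t\ip{Au^{n+1}}{u^{n+1}-\epsilon\lap u^{n+1}}$ below via the coercivity of Proposition~\ref{ppnAdjustedIP}, whose $H^2$-level dissipation is only $\frac{\epsilon}{c}\,\delta t\norm{\lap u^{n+1}}_{L^2}^2$ for a fixed $c>1$, and then try to absorb $\epsilon\,\delta t\,\abs{\ip{\grad p_s(u^{n+1}-u^n)}{\lap u^{n+1}}}$ into that dissipation by Young's inequality. By Theorem~\ref{thmCommutatorEstimate}, $\norm{\grad p_s(u^{n+1}-u^n)}_{L^2}$ is of the same order as $\norm{\lap u^{n+1}}_{L^2}+\norm{\lap u^n}_{L^2}$, so for any Young weight $\mu$ the coefficient produced on $\norm{\lap u^{n+1}}_{L^2}^2$ is at least $\epsilon\left(\frac{\mu}{2}+\frac{1}{2\mu}\right)\geq\epsilon$, which exceeds the available $\frac{\epsilon}{c}$; and shrinking $\delta t$ is irrelevant, because the perturbation and the dissipation carry the same factor $\delta t$. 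So your explanation of where $\kappa_0$ comes from is a misdiagnosis. The paper never splits $-\lap u^{n+1}+\grad p_s(u^n)$ into $Au^{n+1}$ minus a perturbation: it tests the scheme directly with $-\lap u^{n+1}$, so the full dissipation $\norm{\lap u^{n+1}}_{L^2}^2$ (coefficient close to $1$, not $\epsilon/c$) faces the old-time pressure, whose coefficient $\frac{3}{2}\beta<1$ comes from the \emph{sharp} constant $\frac{1}{2}$ in \eqref{eqnGradPsBound}; the mismatch of time levels is then handled by writing the deficit as a telescoping difference of $\norm{\lap u^n}_{L^2}^2$ plus a positive leftover, as in \eqref{disceq:H1est+}. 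Your reformulation of the scheme is algebraically correct, but estimating $Au^{n+1}$ and $\grad p_s(u^{n+1}-u^n)$ separately destroys exactly the cancellation that makes the constants work.

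Second, your discrete equation for the potential is mis-stated: it is not forced by $f^n$. Since the computed pressure already contains $(I-P)f^n$, applying $I-P$ to \eqref{disceq:weakStokes} cancels the forcing and yields $\frac{1}{\delta t}(q^{n+1}-q^n)-\lap q^{n+1}=p_s^{n+1}-p_s^n$; the forcing is the Stokes-pressure increment, i.e.\ the explicit-pressure error reappears here and, unlike in the continuous identity \eqref{eqnHeat}, it does not vanish. Controlling it is where $\delta t<\kappa_0$ is actually used in the paper: the interpolation bound \eqref{eqn12192} lets Young's inequality put an arbitrarily small weight on $\norm{\lap(u^{n+1}-u^n)}_{L^2}^2$ at the price of a large constant times $\norm{\grad(u^{n+1}-u^n)}_{L^2}^2$, and that large term is then absorbed by the numerical-dissipation residual $\frac{1}{2C_\beta\delta t}\norm{\grad(u^{n+1}-u^n)}_{L^2}^2$, whose $1/\delta t$ weight dominates any fixed constant once $\delta t$ is small. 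Your argument needs a correct $q$-balance in any case, because pairing only against $u^{n+1}-\epsilon\lap u^{n+1}$ leaves the uncompensated term $-C_\epsilon\norm{\grad q^{n+1}}_{L^2}^2$ from Lemma~\ref{lmaAdjustedIP}, so the $q$-estimate must be added with a large multiplier; with the wrong forcing, that estimate is not available, and the energy inequality does not close.
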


The proof of this proposition is in Section~\ref{sxnTimeDiscrete}.

\subsection{Global existence results for the extended Navier-Stokes equations.}
When one seeks an $L^2$ energy estimate for~\eqsysENS,
multiplying \eqref{eqnNS} by $u$, the nonlinearity produces the term
\begin{equation}\label{eqnExtraHarmlessTerm}
\int_\Omega u \cdot (u \cdot \grad) u
= -\frac{1}{2} \int_\Omega (\divergence u) \abs{u}^2 .
\end{equation}
In general this is non-zero, but
is morally harmless since $\divergence u$
is a solution of \eqref{eqnHeat} and is well controlled. This is indeed
the case in two dimensions, but under {\it periodic} boundary conditions
(see Proposition~\ref{ppnNLBCGexist}, and the remark following it).
The key ingredient for proving global existence for periodic boundary
conditions is the coercivity~\eqref{eqnPeriodicStokesCoercivity} of the linear terms.
Consequently, despite the extra non-linear term arising
from~\eqref{eqnExtraHarmlessTerm}, the $L^2$ energy balance closes and
the well-known existence results for the standard incompressible
Navier-Stokes equations continue to hold with minor modifications.

The situation is more complicated under no-slip boundary
conditions, however, since now 
coercivity~\eqref{eqnPeriodicStokesCoercivity} \emph{fails}.
 To get any mileage from the linear terms, we need to use an inner-product under which the linear terms are coercive. Using the inner product in Proposition~\ref{ppnAdjustedIP}, and a `brutal' estimate on the nonlinearity, we can obtain a two or three dimensional small-data global existence result.

\begin{theorem}[Small data global existence]\label{thmSmallDataGexist}
Let $d = 2$ or $3$, $\Omega \subset \R^d$ be a bounded domain with $C^3$ boundary. There exists a small constant $V_0 = V_0(\Omega) > 0$ such that if $u_0 \in H^1_0(\Omega)$ with
$$
\norm{u_0}_{H^1} < V_0
$$
then there exists a global strong solution to~\eqsysENS with
\begin{gather}\label{eqnUspaces}
u \in L^2(0, T; H^2(\Omega)\cap H^1_0(\Omega)) \cap H^1(0, T; L^2(\Omega) ).
\end{gather}
for any $T > 0$. Consequently $u \in C( [0, \infty); H^1_0)$ and $\divergence u \in C^\infty( (0, \infty) \times \Omega )$.
\end{theorem}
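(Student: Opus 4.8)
The plan is the classical small-data scheme: local existence, an a priori bound in the coercive norm, and continuation. First I would record that substituting the pressure law~\eqref{eqnPdef} into~\eqref{eqnNS} and using the identity~\eqref{eqnAdef} puts the system in the compact form
\[
\delt u + A u = -P(u\cdot\grad u),
\]
so that the nonlinear term appears Leray-projected, hence divergence free and tangential at $\del\Omega$. Local existence, together with the regularity~\eqref{eqnUspaces} on compact time intervals, follows from the $H^1_0$ local well-posedness of~\cite{bblLiuLiuPego} and the sectoriality of $A$; these also furnish a maximal existence time $T^*$ with the continuation criterion that $T^*<\infty$ forces $\norm{u(t)}_{H^1}\to\infty$ as $t\to T^*$. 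Everything then reduces to an a priori bound on $\norm{u(t)}_{H^1}$.

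To obtain it, fix $\epsilon\in(0,\epsilon_0)$ and the matching $C_\epsilon$ from Proposition~\ref{ppnAdjustedIP}, and test the evolution equation against $u$ in the inner product $\ip{\cdot}{\cdot}_\epsilon$ of~\eqref{d:altip}, giving
\[
\tfrac12\,\delt\ip{u}{u}_\epsilon + \ip{u}{Au}_\epsilon = -\ip{u}{P(u\cdot\grad u)}_\epsilon .
\]
On the left, the coercivity bound~\eqref{eqnCoercivicity} supplies the dissipation $\tfrac1c(\norm{\grad u}_{L^2}^2 + \epsilon\norm{\lap u}_{L^2}^2 + C_\epsilon\norm{\grad q}_{L^2}^2)$. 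On the right, because $P(u\cdot\grad u)$ is divergence free its gradient-projection primitive vanishes, so the $C_\epsilon$-term of~\eqref{d:altip} drops and only $\ip{u}{P(u\cdot\grad u)}+\epsilon\ip{\grad u}{\grad P(u\cdot\grad u)}$ must be controlled. This \emph{brutal} bound is the step I expect to be the main obstacle. Since $d\le3$ gives $H^2\hookrightarrow L^\infty$, I would bound the first piece by $\norm{u}_{L^2}\norm{u}_{L^\infty}\norm{\grad u}_{L^2}\lesssim\norm{u}_{H^1}^2\norm{u}_{H^2}$, and, using $H^1$-boundedness of the Leray projection on the $C^3$ domain together with the product estimate $\norm{u\cdot\grad u}_{H^1}\lesssim\norm{u}_{L^\infty}\norm{u}_{H^2}$, bound the second by $\epsilon\norm{\grad u}_{L^2}\norm{u\cdot\grad u}_{H^1}\lesssim\epsilon\norm{u}_{H^1}\norm{u}_{H^2}^2$. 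Elliptic regularity and Poincar\'e ($u\in H^1_0$) identify $\norm{u}_{H^2}$ with $\norm{\lap u}_{L^2}$ and $\norm{u}_{H^1}$ with $\norm{\grad u}_{L^2}$, so after one application of Young's inequality both contributions are dominated by $(\delta+\Phi(\norm{u}_{H^1}))\times(\text{dissipation})$, where $\delta$ is at our disposal and $\Phi$ is increasing with $\Phi(0)=0$. The delicate part is purely bookkeeping: tracking the Sobolev and Young constants so that this coefficient can be forced below $\tfrac1c$.

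Choosing $\delta$ small and discarding the nonnegative $\epsilon\norm{\lap u}_{L^2}^2$ and $C_\epsilon\norm{\grad q}_{L^2}^2$ pieces then leaves
\[
\delt\ip{u}{u}_\epsilon + \Big(\tfrac{1}{2c}-\Phi(\norm{u}_{H^1})\Big)\norm{\grad u}_{L^2}^2 \le 0 .
\]
Since $\ip{\cdot}{\cdot}_\epsilon$ is equivalent to $\norm{\cdot}_{H^1}$, I would pick $V_0=V_0(\Omega)$ so small that $\norm{u_0}_{H^1}<V_0$ keeps $\Phi<\tfrac{1}{2c}$; a continuity (bootstrap) argument then makes $\ip{u(t)}{u(t)}_\epsilon$ nonincreasing for as long as the solution exists, hence $\norm{u(t)}_{H^1}$ uniformly bounded. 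This a priori bound contradicts the blow-up criterion unless $T^*=\infty$, yielding the global solution. Finally, $u\in C([0,\infty);H^1_0)$ is the standard embedding of $L^2(0,T;H^2)\cap H^1(0,T;L^2)$, and smoothness of $\divergence u$ on $(0,\infty)\times\Omega$ follows because, as observed at~\eqref{eqnHeat}, it solves the heat equation with Neumann data and is therefore smooth for $t>0$. One technical point to state carefully is the justification of the energy identity itself: with only $\delt u\in L^2(0,T;L^2)$ a priori, the time derivative of the $\epsilon\norm{\grad u}_{L^2}^2$ term is handled through the Lions--Magenes pairing of $H^2\cap H^1_0$ with its dual, rewriting $\ip{\grad u}{\grad\delt u}$ as $-\ip{\lap u}{\delt u}$.
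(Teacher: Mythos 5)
Your proposal is correct and takes essentially the same route as the paper: both arguments test $\del_t u + Au = -P((u\cdot\grad)u)$ against $u$ in the adjusted inner product $\ip{\cdot}{\cdot}_\epsilon$ of Proposition~\ref{ppnAdjustedIP}, use the coercivity~\eqref{eqnCoercivicity} to generate dissipation, bound the nonlinearity ``brutally'' (noting the $Q$-term drops since $P((u\cdot\grad)u)$ is divergence free, absorbing part of the bound into $\epsilon\norm{\lap u}_{L^2}^2$ and controlling the rest by smallness of $\norm{u}_{H^1}$), and conclude via a bootstrap together with the local theory and approximation scheme of~\cite{bblLiuLiuPego}. The only difference is bookkeeping: the paper's Lemma~\ref{lmaNonGenEst} estimates the nonlinear term by $C\norm{u}_{H^1}\norm{\grad u}_{H^{1/2}}\norm{\lap u}_{L^2}$ using fractional Sobolev triple-product bounds and an integration by parts via $\lap P = \lap - \grad\divergence$, yielding the differential inequality $\del_t\norm{u}_{H^1_\epsilon}^2 \leq C\norm{u}_{H^1_\epsilon}^6 - \norm{u}_{H^1_\epsilon}^2/c_1$, whereas your bounds rest on the $L^\infty$ embedding and the $H^1$-boundedness of the Leray projection; both are routine and lead to the same smallness threshold.
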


The proof of this theorem is in Section~\ref{sxnSmallData}. Two-dimensional global existence, however, poses a different problem. A key ingredient in 2D global existence for the standard incompressible Navier-Stokes equations is the $L^2$ energy balance: the nonlinearity cancels, and doesn't contribute! Unfortunately, for~\eqsysENS, the $L^2$-energy balance doesn't close because of the higher order contribution from the Stokes pressure gradient.

In the absence of an $L^2$ energy inequality, we are only able to prove a perturbative result. If the initial data is divergence free, then~\eqsysENS reduces to the standard incompressible Navier-Stokes equations, for which 2D global existence is well known. Thus for initial data with small divergence, we can prove 2D global existence for~\eqsysENS.

\begin{theorem}[Small divergence global existence in 2D]\label{thmSmallDivGexist}
Let $\Omega \subset \R^2$ be a bounded $C^3$ domain, $v_0 \in  H^1_0(\Omega)$ with $\divergence v_0 = 0$ be arbitrary.
There exists a small constant
$U_0 = U_0(\Omega, \norm{v_0}_{H^1_0(\Omega)}) > 0$ such that if
\begin{equation}\label{eqnSmallDivGexistAssumption}
u_0 \in H^1_0(\Omega), \quad P_0 u_0 = v_0 \quad\text{and}\quad \norm{\divergence u_0}_{L^2(\Omega)} < U_0
\end{equation}
then there exists a global strong solution to~\eqsysENS with initial data $u_0$ such that~\eqref{eqnUspaces} holds for all $T > 0$.
\end{theorem}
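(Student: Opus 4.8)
The plan is to run a perturbative argument around the reference solution $v$, which is the solution of the standard incompressible Navier-Stokes equations with divergence-free initial data $v_0$. Since $\divergence v_0 = 0$, classical 2D theory gives a global strong solution $v$ with $\divergence v \equiv 0$, and a priori bounds on $v$ in $L^\infty(0,\infty;H^1_0)\cap L^2_{\loc}(0,\infty;H^2)$ that depend only on $\Omega$ and $\norm{v_0}_{H^1_0}$. I would then write the solution of \eqsysENS as $u = v + w$ and derive the equation satisfied by the perturbation $w$. The key structural point is that the divergence $d \defeq \divergence u = \divergence w$ satisfies the heat equation \eqref{eqnHeat} with no-flux boundary conditions, so by standard parabolic estimates $\norm{d(t)}_{L^2}$ and $\norm{\grad d(t)}_{L^2}$ decay exponentially and are controlled for all time by $\norm{\divergence u_0}_{L^2} < U_0$. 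Thus the gradient part of $u$ (equivalently $\grad Q(u)$, or $(I-P)u$) is globally small whenever $U_0$ is small, and it is this smallness that I want to exploit.

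The heart of the argument is a global-in-time energy estimate for $w$ under the adjusted inner product $\ip{\cdot}{\cdot}_\epsilon$ from Proposition~\ref{ppnAdjustedIP}. Testing the equation for $w$ against $w$ in this inner product, the linear extended-Stokes part contributes a coercive term bounded below by $\frac{1}{c}\ip{w}{w}_\epsilon$ and by $\frac{1}{c}(\norm{\grad w}_{L^2}^2 + \epsilon\norm{\lap w}_{L^2}^2 + \dots)$, via \eqref{eqnCoercivicity}--\eqref{eqnCoercivicity2}. The remaining terms are the nonlinear interactions. Writing the full nonlinearity $u\cdot\grad u = (v+w)\cdot\grad(v+w)$ and subtracting $v\cdot\grad v$, the cross terms to estimate are $w\cdot\grad v$, $v\cdot\grad w$, and $w\cdot\grad w$, together with the extra divergence term $-\frac12(\divergence u)\abs{u}^2$ from \eqref{eqnExtraHarmlessTerm}. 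The plan is to estimate each of these using the standard $L^2$--$L^4$--$H^1$ Ladyzhenskaya inequality in 2D, absorbing the top-order contributions into the coercive dissipation provided by $\epsilon\norm{\lap w}_{L^2}^2$ and $\norm{\grad w}_{L^2}^2$. The terms linear in $w$ (namely $w\cdot\grad v$ and $v\cdot\grad w$) are controlled using the a priori bounds on $v$; they force a Gronwall factor whose exponent is integrable in time because $\int_0^\infty \norm{\grad v}_{L^2}^2\,dt$ is finite is 2D. The genuinely nonlinear term $w\cdot\grad w$ and the divergence term are higher order in the smallness of the data and get absorbed provided $\norm{w}$ stays small.

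I would then close the estimate by a continuity (bootstrap) argument: choose $U_0$ small enough that, as long as the adjusted norm of $w$ stays below a fixed threshold $\rho$, the energy inequality reads $\del_t \ip{w}{w}_\epsilon \leq -\frac{1}{c}\ip{w}{w}_\epsilon + (\text{terms small in } U_0 \text{ and in the decaying }\norm{d}_{L^2})$, so that the adjusted norm in fact stays strictly below $\rho/2$ for all time. This prevents the threshold from ever being reached and yields a global bound on $w$, hence on $u = v+w$, in $L^\infty(0,\infty;H^1_0)$. Local existence of a strong solution with the regularity \eqref{eqnUspaces} comes from the local well-posedness theory of~\cite{bblLiuLiuPego}; the global a priori bound then extends it to all $T>0$ by the usual continuation criterion, and the stated smoothness of $\divergence u$ follows from \eqref{eqnHeat}. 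The main obstacle, I expect, is the bookkeeping in the nonlinear estimate: because the nonlinearity is \emph{not} skew-symmetric under the adjusted inner product (this is exactly the difficulty flagged in the introduction), the term $\ip{w\cdot\grad w}{w}_\epsilon$ does not vanish and must be controlled purely by smallness and by the coercive dissipation, so the argument is genuinely perturbative and requires $U_0$ to depend on $\norm{v_0}_{H^1_0}$ through the a priori bounds on $v$.
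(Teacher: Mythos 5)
Your proposal shares the paper's decomposition and its central estimate, but closes the argument by a genuinely different route. The paper (Section~\ref{sxn2DSmallDivGlobalExistence}) also writes $u = v + w$ with $v$ the global 2D Navier--Stokes solution, and derives from Proposition~\ref{ppnAdjustedIP} and Lemma~\ref{lmaNonGenEst} essentially the differential inequality you describe (this is \eqref{eqnGraeqnExtendedDynamicsdW1}),
\begin{equation*}
\frac12\del_t\norm{w}_{H^1_\epsilon}^2 \leq c_2\norm{w}_{H^1_\epsilon}^6 - \Bigl(\frac{1}{c_1} - c_3\bigl(\norm{v}_{H^1}^4 + \norm{\grad v}_{H^{1/2}}^2\bigr)\Bigr)\norm{w}_{H^1_\epsilon}^2 .
\end{equation*}
But the paper exploits it only on a \emph{finite} interval $[0,T_0]$ (Lemma~\ref{lmaEqnWsmalltime}), with $T_0$ chosen so that the reference solution has decayed, $\norm{v(T_0)}_{H^1}\leq V_0/2$; then $\norm{u(T_0)}_{H^1}\leq \norm{v(T_0)}_{H^1}+\norm{w(T_0)}_{H^1}\leq V_0$, and the already-proved small-data theorem (Theorem~\ref{thmSmallDataGexist}) continues the solution on $[T_0,\infty)$. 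You instead run the estimate globally via a bootstrap, never invoking Theorem~\ref{thmSmallDataGexist} or the eventual smallness of $v$. This also works: as long as $c_2\norm{w}_{H^1_\epsilon}^4\leq \frac{1}{2c_1}$, Gronwall with the time-integrable exponent gives $\norm{w(t)}_{H^1_\epsilon}^2 \leq \norm{w_0}_{H^1_\epsilon}^2\, e^{2G}e^{-t/c_1}$ with $G = c_3\int_0^\infty\bigl(\norm{v}_{H^1}^4 + \norm{\grad v}_{H^{1/2}}^2\bigr)\,dt$, so choosing $U_0$ small enough that $\norm{w_0}_{H^1_\epsilon}^2 e^{2G}$ lies below the threshold keeps $w$ in the bootstrap regime for all time. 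Your route is more self-contained and additionally yields exponential decay of $w$; the paper's route is shorter because it reuses Theorem~\ref{thmSmallDataGexist} and the finite-time lemma needs no care about whether the damping survives the transient.

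Two points in your write-up need repair. First, your displayed bootstrap inequality, $\del_t\ip{w}{w}_\epsilon \leq -\frac1c\ip{w}{w}_\epsilon + (\text{terms small in } U_0)$, is not what the energy estimate produces: the cross terms $v\cdot\grad w$ and $w\cdot\grad v$ enter \emph{multiplicatively} in $\norm{w}_{H^1_\epsilon}^2$, and at early times, when $\norm{v(t)}_{H^1}$ may be large, the net coefficient of $\norm{w}_{H^1_\epsilon}^2$ is positive; $w$ can grow by the factor $e^{G}$ before the coercivity-induced damping takes over, and your threshold and $U_0$ must absorb exactly this factor (which, as you anticipate, is where $U_0$ picks up its dependence on $\norm{v_0}_{H^1}$). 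As literally stated, the bootstrap with a uniformly negative linear term plus small forcing would fail on the transient. Second, you should state explicitly the elliptic estimate $\norm{w_0}_{H^1} = \norm{(I-P_0)u_0}_{H^1}\leq c\norm{\divergence u_0}_{L^2}$ coming from \eqref{eqnV0andW0} (cf.~\cite{bblTemam}): the hypothesis controls only the divergence of $u_0$, whereas the energy argument needs smallness of the \emph{whole} initial perturbation $w_0$, including its divergence-free part $Pw_0$; the heat-equation decay of $\divergence u$ controls $(I-P)u$ but says nothing about $Pw$, which is precisely what the bootstrap must control.
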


The operator $P_0$ above is the $H^1_0$-orthogonal projection of $H^1_0(\Omega)$ onto the subspace of divergence free vector fields, and is described in Section~\ref{sxn2DSmallDivGlobalExistence} along with the proof of Theorem~\ref{thmSmallDivGexist}. One strategy to avoid the small divergence assumption is to further damp the divergence.
Namely, for arbitrary initial data (in 2D), if we add a strong enough
divergence-damping term to~\eqref{eqnNS}--\eqref{eqnPdef}, we can guarantee global existence.

\begin{corollary}[Divergence-damped global existence in 2D]\label{clyDivergenceDampedExistence}
Let $\Omega \subset \R^2$ be a $C^3$, bounded domain and $u_0 \in  H^1_0(\Omega)$ be arbitrary. There exists a constant $\alpha_0 = \alpha_0(\Omega, \norm{\divergence u_0}_{L^2(\Omega)}) > 0$  such that if $\alpha\geq\alpha_0$ then the system
\begin{equation}\label{eqnExtendedDynamicsAlpha}
\left\{
\begin{aligned}
\del_t u + P((u \cdot \grad) u)  + Au + \alpha (I - P) u  &= 0 &&\text{in }\Omega\\
u(x,t) &= 0 &&\text{for }x \in \del \Omega, t > 0\\
u(x,0)&=u_0(x),
\end{aligned}\right.
\end{equation}
has  a global strong solution $u$ such that~\eqref{eqnUspaces} holds for all $T > 0$.
\end{corollary}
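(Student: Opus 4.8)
The plan is to produce a uniform-in-time $H^1$ a priori bound on any interval of existence, and then conclude by the standard continuation criterion. Local existence of a strong solution with the regularity~\eqref{eqnUspaces} is obtained exactly as in Theorem~\ref{thmSmallDataGexist}: the extra term $\alpha(I-P)u=\alpha\grad Q(u)$ is a bounded, lower-order perturbation, so $A+\alpha(I-P)$ remains sectorial and generates an analytic semigroup, and the nonlinearity $P((u\cdot\grad)u)$ is treated identically. Let $[0,T^*)$ be the maximal interval of existence; it then suffices to bound $\norm{u(t)}_{L^2}+\norm{\grad u(t)}_{L^2}$ uniformly on $[0,T^*)$.

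First I would record how the damping acts on the divergence. Taking $\divergence$ of~\eqref{eqnExtendedDynamicsAlpha} and using $\divergence P(\cdot)=0$ together with $\grad\divergence u=\lap(I-P)u$ shows that $\divergence u$ solves $\del_t\divergence u=\lap\divergence u-\alpha\divergence u$ with the no-flux condition of~\eqref{eqnHeat}. Testing against $\divergence u$ gives
\begin{equation*}
\tfrac12\del_t\norm{\divergence u}_{L^2}^2+\norm{\grad\divergence u}_{L^2}^2+\alpha\norm{\divergence u}_{L^2}^2=0 .
\end{equation*}
Hence $\int_0^{T^*}\norm{\divergence u}_{L^2}^2\,dt\le\frac1{2\alpha}\norm{\divergence u_0}_{L^2}^2$, whereas $\int_0^{T^*}\norm{\grad\divergence u}_{L^2}^2\,dt\le\frac12\norm{\divergence u_0}_{L^2}^2$ for every $\alpha\ge0$. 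Thus the \emph{zeroth-order} space–time norm of the divergence, and with it $\int\norm{\grad Q(u)}_{L^2}^2\le C\int\norm{\divergence u}_{L^2}^2$, becomes arbitrarily small as $\alpha\to\infty$; this smallness is what will replace the small-divergence hypothesis of Theorem~\ref{thmSmallDivGexist}.

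The core is then an energy estimate in the coercive inner product $\ip{\cdot}{\cdot}_\epsilon$ of Proposition~\ref{ppnAdjustedIP}. Pairing~\eqref{eqnExtendedDynamicsAlpha} with $u$ and using coercivity~\eqref{eqnCoercivicity} supplies the dissipation $\frac1c(\norm{\grad u}_{L^2}^2+\epsilon\norm{\lap u}_{L^2}^2+C_\epsilon\norm{\grad Q(u)}_{L^2}^2)$, while the damping adds the favorable leading term $\alpha\norm{\grad Q(u)}_{L^2}^2$. For the nonlinearity I would decompose $u=Pu+\grad Q(u)$: the genuinely divergence-free self-interaction $\int_\Omega Pu\cdot\big((Pu\cdot\grad)Pu\big)$ vanishes by the classical two-dimensional cancellation, and the $H^1$-order Navier–Stokes term is controlled by the planar Ladyzhenskaya inequality, absorbing a top-order factor into $\epsilon\norm{\lap u}_{L^2}^2$ at the cost of the growth factor $C\norm{u}_{L^2}^2\norm{\grad u}_{L^2}^2$. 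Every remaining contribution carries a factor of $\divergence u$ or $\grad Q(u)$; in the interior the Stokes-pressure pairing $\ip{\grad Q(u)}{\lap u-\grad\divergence u}$ combines favorably (the top-order $\norm{\grad^2 Q(u)}_{L^2}^2$ contributions cancel), leaving lower-order divergence terms that are handled by Step~2, plus boundary traces.

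The two difficulties I anticipate are the following. First, the \emph{boundary terms} generated by the Stokes pressure — essentially traces of $\del_\nu u$ paired against $\grad Q(u)$ — do not vanish; these are precisely what makes no-slip harder than the periodic case and, I expect, what keeps the undamped two-dimensional problem open. Controlling them through trace and interpolation estimates, using the damping-induced smallness of $\divergence u$ and its traces, is where the threshold $\alpha_0=\alpha_0(\Omega,\norm{\divergence u_0}_{L^2})$ enters, and I expect this to be the crux of the argument. Second, the familiar planar circularity: the growth factor $C\norm{u}_{L^2}^2\norm{\grad u}_{L^2}^2$ requires $\int_0^{T^*}\norm{\grad u}_{L^2}^2\,dt<\infty$ before Gronwall can be applied. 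I would break this by first extracting, from the $L^2$-level part of the estimate — where the self-interaction genuinely cancels and the interior divergence terms are controlled as above — the uniform bound $\int_0^{T^*}\norm{\grad u}_{L^2}^2\,dt\le C(\norm{u_0}_{L^2}^2+\norm{\divergence u_0}_{L^2}^2)$, independent of $T^*$. Feeding this into the two-dimensional Gronwall argument then closes the $H^1$ estimate, yielding a uniform (if possibly large) bound on $\norm{u}_{H^1}$ and hence $T^*=\infty$; note that only the closure condition, not the size of the resulting bound, needs to be controlled by $\norm{\divergence u_0}_{L^2}$, consistent with the stated dependence of $\alpha_0$.
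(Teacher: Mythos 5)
There is a genuine gap, and it sits exactly where you place the crux: the claim that the ``$L^2$-level part of the estimate'' closes, yielding $\int_0^{T^*}\norm{\grad u}_{L^2}^2\,dt\le C(\norm{u_0}_{L^2}^2+\norm{\divergence u_0}_{L^2}^2)$ independently of $T^*$. When you pair~\eqref{eqnExtendedDynamicsAlpha} with $u$ in $L^2$ and integrate the Stokes-pressure term by parts, the interior contributions do indeed carry factors of $\divergence u$ or $\grad^2 Q(u)$ and are harmless, but the boundary term $\int_{\del\Omega}\grad Q(u)\cdot\del_\nu u\,d\sigma$ survives ($u=0$ on $\del\Omega$ kills $u$, not $\del_\nu u$). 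Estimating it requires a trace of $\grad u$, e.g.\ $\norm{\del_\nu u}_{L^2(\del\Omega)}\le C\norm{\grad u}_{L^2}^{1/2}\norm{u}_{H^2}^{1/2}$, so this term has the form (exponentially small coefficient) $\times\,\norm{u}_{H^2}^{1/2}$ --- and the pure $L^2$ balance contains no $H^2$ dissipation at all with which to absorb any positive power of $\norm{u}_{H^2}$. Large $\alpha$ shrinks the prefactor (through the decay of $\divergence u$, hence of $\norm{\grad Q(u)}_{H^1}$ and its trace), but it cannot supply the missing half derivative: however small its coefficient, the term is simply not dominated by $\norm{\grad u}_{L^2}$ plus the damping terms. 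If you instead import $H^2$ control from the $\epsilon$-inner-product estimate, you reintroduce the $H^1$-level nonlinearity, whose Gronwall factor requires exactly the bound on $\int\norm{\grad u}_{L^2}^2\,dt$ you were trying to produce; writing out the resulting coupled inequalities, they close only under a smallness condition on the data. In other words, your Step for breaking the ``planar circularity'' reconstructs the very obstruction that keeps the undamped 2D problem open; the damping does not remove it.

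The paper never attempts a direct global energy estimate for $u$. Its proof is a two-stage reduction: (i) since $B_\alpha=A+\alpha(I-P)$ is coercive under $\ip{\cdot}{\cdot}_\epsilon$ with constants independent of $\alpha$ (Proposition~\ref{ppnBCoercive}), the proofs of Theorems~\ref{thmSmallDataGexist} and~\ref{thmSmallDivGexist} run verbatim for~\eqref{eqnExtendedDynamicsAlpha}, giving a solution on some $[0,T_0]$ with $T_0$ and $\norm{u(T_0)}_{H^1}$ bounded independently of $\alpha$; (ii) $\divergence u$ solves the damped heat equation, so $\norm{\divergence u(T_0)}_{L^2}^2\le e^{-(\lambda_1+\alpha)T_0}\norm{\divergence u_0}_{L^2}^2$, which drops below the threshold $U_0$ of Theorem~\ref{thmSmallDivGexist} once $\alpha\ge\alpha_0$; (iii) that theorem then continues the solution on $[T_0,\infty)$. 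The idea your proposal is missing is hidden in step (iii): Theorem~\ref{thmSmallDivGexist} also does not estimate $u$ directly, but decomposes $u=v+w$, where $v$ is an exact solution of the classical 2D Navier--Stokes equations (for which the $L^2$ balance genuinely closes, since $\divergence v=0$) and $w$ is small and is handled perturbatively via the brutal estimate of Lemma~\ref{lmaNonGenEst}. The damping's only role is to drive the divergence below that smallness threshold by time $T_0$ --- not to close an energy estimate for $u$ itself.
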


The main idea in proving Corollary~\ref{clyDivergenceDampedExistence} is to verify that the divergence-damped extended Stokes operator $B_\alpha$ defined by
\begin{equation}\label{eqnBAlphaDef}
B_\alpha \defeq A + \alpha(I-P)
\end{equation}
is coercive, with coercivity constant \emph{independent} of $\alpha$. Consequently, the proofs of Theorems~\ref{thmSmallDataGexist} and~\ref{thmSmallDivGexist} work verbatim for the system~\eqref{eqnExtendedDynamicsAlpha}, with constants independent of~$\alpha$. Combining these existence theorems, and using the added divergence damping gives Corollary~\ref{clyDivergenceDampedExistence}, a better existence result as an easy corollary. We devote Section~\ref{sxnDivergenceDamping} to the coercivity of $B_\alpha$ (Proposition~\ref{ppnBCoercive}), and the proof of Corollary~\ref{clyDivergenceDampedExistence}.\medskip

So far, our two-dimensional global existence results under no-slip boundary conditions required either a small initial divergence assumption, or an additional strong divergence damping term. 
Such requirements are not needed under periodic boundary conditions, primarily because of~\eqref{eqnPeriodicStokesCoercivity}.
We observe, then, that the identity~\eqref{eqnPeriodicStokesCoercivity} will still hold in domains with boundary, 
provided we consider functions $u$ with boundary conditions
\begin{equation}\label{eqnNlBC}
Pu \cdot \tau = 0 \text{ on } \del \Omega
\qquad\text{and}\qquad
u \cdot \nu = 0 \text{ on } \del \Omega,
\end{equation}
where $\nu$ and $\tau$ are the unit normal and tangential vectors respectively.
These boundary conditions~\eqref{eqnNlBC} reduce to the usual no-slip conditions in the physically relevant situation where $u = Pu$.

Armed with~\eqref{eqnPeriodicStokesCoercivity}, we obtain a 2D global existence result without a smallness assumption, or any additional divergence damping.
\begin{proposition}\label{ppnNLBCGexist}
Let $\Omega \subset \R^2$ be locally Lipschitz and bounded, and let $u_0 \in H^1(\Omega)$. There exists a time-global strong solution to~\eqref{eqnNS}--\eqref{eqnPdef} with initial data $u_0$ and boundary conditions~\eqref{eqnNlBC}.
\end{proposition}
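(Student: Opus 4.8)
The plan is to exploit the fact that, under the boundary conditions~\eqref{eqnNlBC}, the extended Stokes operator recovers the clean coercivity~\eqref{eqnPeriodicStokesCoercivity}, so that the $L^2$ energy balance is governed by the full Dirichlet form $\norm{\grad u}_{L^2}^2$ rather than the weaker adjusted inner products of Proposition~\ref{ppnAdjustedIP}. First I would rewrite~\eqref{eqnNS}--\eqref{eqnPdef} in the operator form $\del_t u + P(u\cdot\grad u) + Au = 0$ and record two structural consequences of~\eqref{eqnNlBC}. Since $Pu$ is by construction divergence free and tangential, so that $Pu\cdot\nu = 0$ automatically, the condition $Pu\cdot\tau = 0$ forces $v := Pu$ to vanish on $\del\Omega$, whence $v \in H^1_0$. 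Applying $(I-P)$ to the equation shows that $\grad q := (I-P)u$ evolves by a decoupled heat flow: indeed $\del_t\grad q = \grad\divergence u$, and since $\grad q\cdot\nu = (u-v)\cdot\nu = 0$ on $\del\Omega$, the potential $q$ solves the heat equation with homogeneous Neumann data (equivalently $\divergence u$ solves~\eqref{eqnHeat}). Consequently $\grad q$ is controlled globally and for free: $\norm{\divergence u(t)}_{L^2}$ is nonincreasing and $\grad q \in L^\infty(0,T;L^2)\cap L^2(0,T;H^1)$, whence by the two-dimensional Ladyzhenskaya interpolation $\norm{w}_{L^4}^2 \le C\norm{w}_{L^2}\norm{w}_{H^1}$ one obtains $\grad q \in L^4((0,T)\times\Omega)$, with norm controlled by $\norm{(I-P)u_0}_{L^2}$.

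The core of the argument is the $L^2$ energy estimate. Pairing the equation with $u$ and invoking the coercivity~\eqref{eqnPeriodicStokesCoercivity}, which holds under~\eqref{eqnNlBC}, gives
\begin{equation*}
\tfrac12\del_t\norm{u}_{L^2}^2 + \norm{\grad u}_{L^2}^2 = -\ip{u}{P(u\cdot\grad u)} = -\ip{v}{u\cdot\grad u},
\end{equation*}
using self-adjointness of $P$. Writing $u = v + \grad q$ and integrating by parts, with $\divergence v = 0$ and the boundary facts $u\cdot\nu = 0$ and $v = 0$ on $\del\Omega$, the genuinely divergence-free part $\ip{v}{v\cdot\grad v}$ cancels in the usual way; what survives carries a factor of the divergence part, namely $-\tfrac12\int_\Omega(\divergence u)\abs{v}^2$ and $-\int_\Omega(v\cdot\grad u)\cdot\grad q$ (cf.~\eqref{eqnExtraHarmlessTerm}). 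Bounding these by $\tfrac12\norm{\divergence u}_{L^2}\norm{v}_{L^4}^2$ and $\norm{v}_{L^4}\norm{\grad u}_{L^2}\norm{\grad q}_{L^4}$, then applying Ladyzhenskaya's inequality to $\norm{v}_{L^4}$ and Young's inequality, I can absorb a small multiple of $\norm{\grad u}_{L^2}^2$ into the left-hand side and reduce the balance to
\begin{equation*}
\del_t\norm{u}_{L^2}^2 + \norm{\grad u}_{L^2}^2 \le C(t)\norm{u}_{L^2}^2 + g(t),
\qquad
C(t) := C\bigl(\norm{\divergence u}_{L^2}^2 + \norm{\grad q}_{L^4}^4\bigr),
\end{equation*}
where $g\in L^1(0,T)$ collects the controlled forcing terms involving $\grad\grad q \in L^2_tL^2$. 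Because $\norm{\divergence u(t)}_{L^2}$ is bounded and $\grad q \in L^4((0,T)\times\Omega)$, the factor $C(t)$ is integrable on $(0,T)$, so Gronwall's lemma yields a global-in-time bound on $\norm{u}_{L^\infty_tL^2}$ together with $\grad u \in L^2_tL^2$, with no smallness assumption and no added damping.

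With the a priori energy bound in hand I would construct the solution by a Galerkin scheme adapted to~\eqref{eqnNlBC} (or, since $A$ is sectorial and generates an analytic semigroup, by a mild-solution fixed point), passing to the limit using the uniform estimates and the compactness of $\grad q$ supplied by the heat flow. To reach the strong solution asserted, I would run the standard two-dimensional enstrophy estimate for $v$---now a forced, divergence-free velocity vanishing on $\del\Omega$---in which the coupling to the field $\grad q$, smooth for $t>0$, enters only as controlled lower-order forcing; uniqueness then follows by the usual 2D energy argument. The main obstacle, and the point that distinguishes this from textbook 2D Navier--Stokes, is precisely that the nonlinearity does \emph{not} drop out of the $L^2$ balance, since $\divergence u \neq 0$: one must verify that every surviving nonlinear contribution is slaved to the divergence part $\grad q = (I-P)u$, and that the decoupled Neumann heat equation delivers exactly enough space-time integrability ($L^4_{t,x}$ in two dimensions) to render the resulting Gronwall factor integrable near $t=0$, where only $H^1$ data is available.
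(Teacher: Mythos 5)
Your proposal is correct and follows essentially the same route as the paper: both rest on the decomposition $u = Pu + \grad q$, the decoupled Neumann heat flow for $q$, the observation that~\eqref{eqnNlBC} forces $v = Pu$ to vanish on $\del\Omega$ so that $v \in H^1_0$, the fact that every surviving nonlinear contribution is slaved to the divergence part, and a Galerkin construction at the end. The only cosmetic differences are that the paper first applies $P$ to the equation (using $P((\grad q\cdot\grad)\grad q) = 0$) and runs the $L^2$ and $H^1$ estimates on $v$ alone rather than on the full $u$, and it controls the cross terms through the pointwise-in-time elliptic bound $\norm{\grad^2 q}_{L^2} \leq C\norm{\lap q}_{L^2} = C\norm{\divergence u}_{L^2}$ combined with $\int_0^\infty \norm{\divergence u(t)}_{L^2}^2\,dt < \infty$, instead of your space-time $L^4$ bound on $\grad q$.
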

We prove the identity~\eqref{eqnPeriodicStokesCoercivity} and Proposition~\ref{ppnNLBCGexist} in Section~\ref{sxnBC}. The proof of Proposition~\ref{ppnNLBCGexist} emphasizes another (analytical) advantage of the boundary conditions~\eqref{eqnNlBC}. Under all the boundary conditions we consider (no-slip, periodic, and~\eqref{eqnNlBC}) the evolution equation for the gradient projection is always linear, self contained, and decays at an explicitly known rate. The evolution equation for the Leray projection (equation~\eqref{per.v}), is coupled to the gradient projection; however the coupling terms are harmless. What causes trouble under the no-slip boundary conditions is that the evolution of the Leray projection is also coupled to the gradient projection through boundary conditions! This proves problematic in the case of 2D global existence. On the other hand, periodic boundary conditions, or the boundary conditions~\eqref{eqnNlBC} provide an explicit \emph{de-coupled} boundary condition for the Leray projection, which simplifies the analysis greatly. Unfortunately, the price paid is that the boundary conditions~\eqref{eqnNlBC} are much harder to implement numerically.

\section{Coercivity of the extended Stokes operator.}\label{sxnAdjustedIP}
As mentioned earlier, the extended Stokes operator is \emph{not} coercive under the standard $L^2$ inner product. However, it \emph{is} coercive under a non-standard, but $H^1$-equivalent, inner product. This is the main tool we use in studying the extended Navier-Stokes. The aim of this section is to prove Proposition~\ref{ppnAdjustedIP} (coercivity under the adjusted $H^1$ inner product).
The main ingredient in the proof is the following estimate on the 
Laplace-Leray commutator.

\begin{theorem}[Liu, Liu, Pego~\cite{bblLiuLiuPego}]\label{thmCommutatorEstimate}
Let $\Omega$ be a connected, bounded domain with $C^3$ boundary. For any $\delta>0$ there exists $C_\delta\geq0$ such that
\begin{equation}\label{eqnGradPsBound}
\norm{\grad p_s(u)}_{L^2}^2 \leq
\left(\frac12+\delta\right)
\norm{\lap u}_{L^2}^2 + C_\delta \norm{\grad u}_{L^2}^2
\end{equation}
for all $u\in H^2\cap H^1_0(\Omega)$.
\end{theorem}
We refer the reader to~\cite{bblLiuLiuPego} for the proof of
Theorem~\ref{thmCommutatorEstimate}.

\subsection{\texorpdfstring{$H^1$}-equivalent coercivity on \texorpdfstring{$D(A^2)$}{D(A2)}.}
The idea behind the proof of Proposition~\ref{ppnAdjustedIP} is to use Theorem~\ref{thmCommutatorEstimate} and prove coercivity assuming the `extra' boundary condition $A u \in H^1_0$. We will later use an approximation argument to prove the Proposition for all $H^2 \cap H^1_0$ functions.

\begin{lemma}\label{lmaAdjustedIP}
For any $\epsilon > 0$ sufficiently small, there exists a constant $c = c(\Omega)$, independent of $\epsilon$, and a constant $C_\epsilon = C_\epsilon(\Omega) > 0$, depending on $\epsilon$ and $\Omega$, such that~\eqref{eqnCoercivicity} holds for all $u \in H^2 \cap H^1_0$, such that $Au \in H^1_0$.
\end{lemma}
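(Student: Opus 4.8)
The plan is to expand $\ip{u}{Au}_\epsilon$ according to the definition~\eqref{d:altip} into three pieces, integrate by parts in each (writing $q=Q(u)$), and then absorb the resulting cross terms using the commutator estimate~\eqref{eqnGradPsBound}. The first piece is $\ip{u}{Au}=\norm{\grad u}_{L^2}^2+\ip{u}{\grad p_s}$, exactly as in~\eqref{eqnIpUAu}. For the second piece I would integrate by parts using the hypothesis $Au\in H^1_0$: since $Au$ vanishes on $\del\Omega$ the boundary term drops, giving $\epsilon\ip{\grad u}{\grad Au}=-\epsilon\ip{\lap u}{Au}=\epsilon\norm{\lap u}_{L^2}^2-\epsilon\ip{\lap u}{\grad p_s}$ via $Au=-\lap u+\grad p_s$. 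This is the one and only place where the extra boundary condition $Au\in H^1_0$ is used; without it one cannot cleanly extract $\norm{\lap u}_{L^2}^2$. For the third piece I would first note $\grad Q(Au)=(I-P)Au=-\grad\divergence u$, so by the mean-zero normalization $Q(Au)=-\divergence u=-\lap q$; integrating by parts and using $\del q/\del\nu=\grad q\cdot\nu=(I-P)u\cdot\nu=0$ on $\del\Omega$ (because $u\cdot\nu=0$ and $Pu\cdot\nu=0$), this piece equals $C_\epsilon\norm{\grad q}_{L^2}^2$.

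Collecting, $\ip{u}{Au}_\epsilon$ equals the three good terms $\norm{\grad u}_{L^2}^2+\epsilon\norm{\lap u}_{L^2}^2+C_\epsilon\norm{\grad q}_{L^2}^2$ plus two ``bad'' cross terms, $\ip{u}{\grad p_s}$ and $-\epsilon\ip{\lap u}{\grad p_s}$. For the first I would use the Helmholtz orthogonality $\ip{Pu}{\grad p_s}=0$ (immediate from $\divergence Pu=0$ and $Pu\cdot\nu=0$) to rewrite $\ip{u}{\grad p_s}=\ip{\grad q}{\grad p_s}$, so that $\abs{\ip{u}{\grad p_s}}\leq\norm{\grad q}_{L^2}\norm{\grad p_s}_{L^2}$. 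Both bad terms then reduce, via Cauchy--Schwarz, to controlling $\norm{\grad p_s}_{L^2}$, for which I invoke~\eqref{eqnGradPsBound}.

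The delicate point, and the main obstacle, is the second bad term $-\epsilon\ip{\lap u}{\grad p_s}$: it threatens the good term $\epsilon\norm{\lap u}_{L^2}^2$ directly, and a crude Young splitting fails precisely because~\eqref{eqnGradPsBound} is sharp. Instead I would bound $\norm{\lap u}_{L^2}\norm{\grad p_s}_{L^2}\leq\sqrt{\tfrac12+\delta}\,\norm{\lap u}_{L^2}^2+\sqrt{C_\delta}\,\norm{\lap u}_{L^2}\norm{\grad u}_{L^2}$ and fix $\delta$ small (e.g. $\delta\leq\tfrac1{16}$) so that $1-\sqrt{\tfrac12+\delta}\geq\tfrac14$; this is exactly where it is crucial that the commutator constant $\tfrac12$ is strictly below $1$. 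The leftover $\norm{\lap u}_{L^2}\norm{\grad u}_{L^2}$ is absorbed by Young with a fixed weight, costing only an $O(\epsilon)$ multiple of $\norm{\grad u}_{L^2}^2$, so the second piece contributes at least $\tfrac{\epsilon}{8}\norm{\lap u}_{L^2}^2-O(\epsilon)\norm{\grad u}_{L^2}^2$. For the first bad term I apply Young with a free weight $\lambda$, $\norm{\grad q}_{L^2}\norm{\grad p_s}_{L^2}\leq\tfrac{\lambda}{2}\norm{\grad q}_{L^2}^2+\tfrac1{2\lambda}\norm{\grad p_s}_{L^2}^2$, and again substitute~\eqref{eqnGradPsBound}.

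Tracking the three resulting coefficients dictates the choice of constants. I expect to need $\lambda\sim 1/\epsilon$ so that the term $\tfrac1{2\lambda}(\tfrac12+\delta)\norm{\lap u}_{L^2}^2$ is dominated by the $\tfrac{\epsilon}{8}$ margin on $\norm{\lap u}_{L^2}^2$, and correspondingly $C_\epsilon\sim 1/\epsilon$ large enough to dominate $\tfrac{\lambda}{2}\norm{\grad q}_{L^2}^2$. Since every error contribution to the $\norm{\grad u}_{L^2}^2$ coefficient carries an explicit factor of $\epsilon$ (or of $1/\lambda\sim\epsilon$), taking $\epsilon$ small — with the threshold depending only on $\Omega$ through the now-fixed $\delta$ and $C_\delta$ — keeps that coefficient above $\tfrac12$. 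One then reads off~\eqref{eqnCoercivicity} with an absolute constant $c$ (for instance $c=16$) independent of $\epsilon$, while $C_\epsilon$ is the large, $\epsilon$-dependent constant forced by the Young weight $\lambda$. The only remaining check is that $\lambda$, $C_\epsilon$, and the smallness threshold for $\epsilon$ are mutually consistent, which is routine since all three can be named explicitly in terms of $\delta$ and $C_\delta$ once the inequalities above are in place.
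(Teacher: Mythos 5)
Your proposal is correct and follows essentially the same route as the paper's proof: the same three-piece expansion of $\ip{u}{Au}_\epsilon$, the same (and only) use of the hypothesis $Au \in H^1_0$ to integrate the $\epsilon$-term by parts, the same identification $Q(Au) = -\divergence u = -\lap q$ yielding $C_\epsilon\norm{\grad q}_{L^2}^2$, and the same absorption of the cross terms via the commutator estimate~\eqref{eqnGradPsBound}. One aside is inaccurate: the symmetric Young split $\abs{\ip{\lap u}{\grad p_s}} \leq \tfrac12\norm{\lap u}_{L^2}^2 + \tfrac12\norm{\grad p_s}_{L^2}^2$ does \emph{not} fail --- it is exactly what the paper uses to obtain~\eqref{eqnIp2}, since $\tfrac12\bigl(1+\tfrac12+\delta\bigr) < 1$; your square-root variant merely yields a slightly better constant.
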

\begin{proof}
Observe first that there exists a constant $C = C(\Omega)$ such that for all $u \in H^2 \cap H^1_0$ we have
\begin{equation}\label{eqnGradPsStupidUB}
\norm{\grad p_s(u)}_{L^2} \leq C \norm{\lap u}_{L^2}.
\end{equation}
While this immediately follows from Theorem~\ref{thmCommutatorEstimate} and the Poincar\'e inequality, we can see it directly from~\eqref{eqnPsDef} because,
due to elliptic regularity,
\begin{equation}\label{i:pslap}
\norm{\grad p_s(u)}_{L^2}^2 \leq C \left(\norm{\lap u}_{L^2}^2 + \norm{\grad \divergence u}_{L^2}^2\right) \leq C \norm{\lap u}_{L^2}^2.
\end{equation}

Now let $u \in H^2 \cap H^1_0$ be such that $Au \in H^1_0$, and $q = Q(u)$ be the unique mean zero function such that $\grad q = (I - P)u$. Then
\begin{multline}\label{eqnIp1}
\ip{u}{Au}
    = \ip{u}{-\lap u} + \ip{u}{\grad p_s}
    = \norm{\grad u}_{L^2}^2 + \ip{\grad q}{\grad p_s}\\
    \geq \norm{\grad u}_{L^2}^2 - \norm{\grad q}_{L^2} \norm{\grad p_s}_{L^2}
    \geq \norm{\grad u}_{L^2}^2 - C \norm{\grad q}_{L^2} \norm{\lap u}_{L^2}\\
    \geq \norm{\grad u}_{L^2}^2 - \frac{\epsilon}{16} \norm{\lap u}_{L^2}^2 - C_\epsilon \norm{\grad q}_{L^2}^2
\end{multline}
where the second last inequality followed from~\eqref{eqnGradPsStupidUB}, and $C_\epsilon$ is some constant depending only on $\Omega$ and $\epsilon$.

Since $Au = 0$ on $\del \Omega$ by assumption, we can integrate the $H^1$-term by parts. This gives
\begin{equation}\label{eqnIp2}
\ip{\grad u}{\grad Au} = -\ip{\lap u}{-\lap u + \grad p_s} \geq \frac{1}{8} \norm{\lap u}_{L^2}^2 - C_1 \norm{\grad u}^2
\end{equation}
where $C_1$ is the constant that arises from
Theorem~\ref{thmCommutatorEstimate}. 

Thus if $\epsilon < \frac{1}{2C_1}$, equations~\eqref{eqnIp1} and~\eqref{eqnIp2} give
\begin{equation}\label{eqnIp3}
\ip{u}{Au} + \epsilon \ip{\grad u}{\grad Au} \geq \frac{1}{2} \norm{\grad u}_{L^2}^2 + \frac{\epsilon}{16}\norm{\lap u}_{L^2}^2 - C_\epsilon \norm{\grad q}_{L^2}^2
\end{equation}

Now let $r$ be the unique mean zero function such that $\grad r = (I - P) Au$. Observe that
$$
(I - P) A u = (I - P)(-P\lap u - \grad \divergence u) = -\grad \divergence u.
$$
Since $\int_\Omega \divergence u =0$, we must have $r = -\divergence u = -\lap q$. Thus
$$
\ip{q}{r} = \ip{q}{-\lap q} = \norm{\grad q}_{L^2}^2.
$$
Combining this with~\eqref{eqnIp3} we get~\eqref{eqnCoercivicity} as desired.
\end{proof}

\subsection{Properties of the extended Stokes operator.}
Consider the extended Stokes operator $A$ as an operator from
$L^2(\Omega)$ into $L^2(\Omega)$ with domain $D(A)=H^2\cap H^1_0$.  In
this context, we recall that Proposition~\ref{ppnAdjustedIP}
asserts~\eqref{eqnCoercivicity} for all $u \in D(A)$; however,
Lemma~\ref{lmaAdjustedIP} only proves~\eqref{eqnCoercivicity} for all $u
\in D(A^2)$. To address this gap, and finish the proof of
Proposition~\ref{ppnAdjustedIP}, we need a few basic properties of the extended Stokes operator.

\begin{lemma}[Regularity and invertibility]\label{lmaAinv}
The extended Stokes operator $A$ has a compact inverse. Furthermore, there exists a constant $c = c(\Omega) > 0$ such that
\begin{equation}\label{eqnAinvBound}
\frac{1}{c} \norm{u}_{H^2} \leq \norm{Au}_{L^2} \leq c \norm{u}_{H^2},
\quad\text{for all } u \in H^2 \cap H^1_0
\end{equation}
\end{lemma}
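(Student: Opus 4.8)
The plan is to prove the two-sided bound~\eqref{eqnAinvBound} first, treating the upper inequality as routine and the lower one as the crux, and then to read off invertibility and compactness from the lower bound together with standard elliptic theory. The upper bound is immediate: since $Au = -\lap u + \grad p_s(u)$, the elliptic estimate~\eqref{i:pslap} gives $\norm{\grad p_s(u)}_{L^2} \leq C \norm{\lap u}_{L^2}$, whence $\norm{Au}_{L^2} \leq (1+C)\norm{\lap u}_{L^2} \leq c\norm{u}_{H^2}$.

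For the lower bound I would establish the a priori estimate $\norm{u}_{H^2} \leq c\norm{Au}_{L^2}$ by a Helmholtz/Leray decomposition of $f \defeq Au$. Applying $I-P$ to the identity $Au = -P\lap u - \grad\divergence u$ and using that $P$ annihilates the gradient $\grad\divergence u$ (with $\divergence u \in H^1$), one finds $\grad\divergence u = -(I-P)f$. Writing $(I-P)f = \grad\phi$ for the mean-zero potential $\phi$ and noting $\divergence u$ is mean-zero because $u \in H^1_0$ (so $\int_\Omega \divergence u = \int_{\del\Omega} u\cdot\nu = 0$), I get $\divergence u = -\phi$ and $\norm{\divergence u}_{L^2} \leq c\norm{f}_{L^2}$ by the Poincar\'e inequality. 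Moreover $q = Q(u)$ solves the Neumann problem $\lap q = \divergence u$ with $\del_\nu q = 0$ on $\del\Omega$ (the boundary condition holds since $(I-P)u\cdot\nu = u\cdot\nu - Pu\cdot\nu = 0$), so elliptic regularity yields $\norm{\grad q}_{L^2} \leq c\norm{f}_{L^2}$.

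To close the estimate I control $\lap u$ and $\grad u$ in turn. From $\lap u = -f + \grad p_s$ and the sharp commutator estimate of Theorem~\ref{thmCommutatorEstimate} with $\tfrac12+\delta < 1$, the term $\norm{\grad p_s}_{L^2}$ can be absorbed to give $\norm{\lap u}_{L^2} \leq c(\norm{f}_{L^2} + \norm{\grad u}_{L^2})$. Next, the energy identity $\norm{\grad u}_{L^2}^2 = \ip{u}{f} - \ip{\grad q}{\grad p_s}$, derived exactly as in~\eqref{eqnIp1}, combined with Poincar\'e, the bound $\norm{\grad q}_{L^2}\leq c\norm{f}_{L^2}$, the crude bound $\norm{\grad p_s}_{L^2} \leq \norm{f}_{L^2} + \norm{\lap u}_{L^2}$, and Young's inequality, yields $\norm{\grad u}_{L^2} \leq c\norm{f}_{L^2}$. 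Feeding this back gives $\norm{\lap u}_{L^2} \leq c\norm{f}_{L^2}$, and finally Dirichlet elliptic regularity $\norm{u}_{H^2} \leq c(\norm{\lap u}_{L^2} + \norm{u}_{L^2})$ completes the lower bound.

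With the two-sided bound in hand, $A \colon H^2\cap H^1_0 \to L^2$ is bounded, injective, and has closed range. For surjectivity I would use the method of continuity along the family $A_t = -\lap + t\,\grad p_s$, $t\in[0,1]$: each $A_t$ satisfies the same a priori estimate with constants uniform in $t$ (the commutator estimate applies to $t\,\grad p_s$ since $t\leq 1$, so the absorption argument is unchanged), and $A_0 = -\lap$ is an isomorphism from $H^2\cap H^1_0$ onto $L^2$, hence so is $A_1 = A$. Compactness of $A^{-1}$ then follows by factoring $A^{-1}\colon L^2 \to H^2\cap H^1_0 \hookrightarrow L^2$, where the first arrow is bounded by the lower bound and the inclusion is compact by Rellich. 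The main obstacle is the circular dependence between $\grad p_s$ and $\lap u$ in the lower bound; its resolution is the sharp constant $\tfrac12+\delta<1$ in Theorem~\ref{thmCommutatorEstimate}, which alone makes the absorption possible. A secondary point requiring care is surjectivity, which I handle by the homotopy to the Dirichlet Laplacian rather than by an explicit construction of the solution.
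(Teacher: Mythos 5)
Your proof of the two-sided bound \eqref{eqnAinvBound} is correct, and it takes a genuinely different and more direct route than the paper's. The paper never proves the lower bound in \eqref{eqnAinvBound} by a direct a priori estimate on $A$ itself: it first shows $\norm{u}_{H^2}\leq C\norm{(A+\lambda I)u}_{L^2}$ for large $\lambda$, obtains surjectivity of $A+\lambda I$ from the factorization $A+\lambda I=(I+B)(\lambda I-\lap)$ with $B=\grad p_s\circ(\lambda I-\lap)\inv$ of $L^2$ operator norm less than one, deduces compactness of the resolvent and discreteness of the spectrum, rules out the eigenvalue $0$ by a kernel computation, and only then recovers the lower bound from the operator identity $A\inv=(A+\lambda I)\inv(I+\lambda A\inv)$. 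Your direct argument --- using $(I-P)Au=-\grad\divergence u$ to get $\norm{\divergence u}_{L^2}+\norm{\grad Q(u)}_{L^2}\leq c\norm{Au}_{L^2}$, absorbing $\grad p_s$ via Theorem~\ref{thmCommutatorEstimate}, and closing with the energy identity --- is sound, and arguably cleaner for that half of the statement.

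The gap is in surjectivity. The method of continuity requires $\norm{u}_{H^2}\leq C\norm{A_t u}_{L^2}$ with $C$ uniform in $t\in[0,1]$, and your parenthetical justification addresses only the absorption step. But the step that actually closes your estimate --- control of $\divergence u$ and $\grad Q(u)$ by $\norm{f}_{L^2}$ --- rests on the identity $(I-P)Au=-\grad\divergence u$, i.e.\ on the exact cancellation of $(I-P)\lap u$ between $-\lap u$ and $\grad p_s(u)$, and this cancellation occurs only at $t=1$. For $0<t<1$ one gets instead $(I-P)A_tu=-(1-t)(I-P)\lap u-t\grad\divergence u$, and $(I-P)\lap u$ is genuinely of second order: that it is \emph{not} dominated by $\norm{\grad u}_{L^2}$ is precisely the content of Theorem~\ref{thmCommutatorEstimate} and Proposition~\ref{ppnNonPositivity}. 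So the bound $\norm{\grad Q(u)}_{L^2}\leq c\norm{f}_{L^2}$ is lost, and in the energy identity $\norm{\grad u}_{L^2}^2=\ip{u}{f}-t\ip{\grad Q(u)}{\grad p_s}$ the only available bound is $\norm{\grad Q(u)}_{L^2}\leq\norm{u}_{L^2}\leq c\norm{\grad u}_{L^2}$, which produces a term of size $\norm{\grad u}_{L^2}\bigl(\norm{f}_{L^2}+\norm{\lap u}_{L^2}\bigr)\sim\norm{\grad u}_{L^2}^2$ whose constant is not small and cannot be absorbed; indeed Proposition~\ref{ppnNonPositivity} shows $\abs{\ip{\grad Q(u)}{\grad p_s}}$ really can be as large as $(1-\epsilon)\norm{\grad u}_{L^2}^2$. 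Your scheme therefore yields the uniform estimate only for $t$ near $0$ and $t$ near $1$, and the homotopy does not connect. A repair along your lines would run the homotopy in $\lambda$ along $A+\lambda I$, $\lambda\in[0,\Lambda]$ --- there the structure survives, since $(I-P)(A+\lambda I)u=-\grad\divergence u+\lambda\grad Q(u)$ and testing the resulting Neumann problem for $Q(u)$ still controls $\grad Q(u)$ and $\divergence u$ uniformly --- but one then still needs surjectivity of $A+\Lambda I$ at the far endpoint, which is exactly the paper's Neumann-series argument; at that point you have essentially reconstructed the paper's proof.
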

\begin{proof}
 Our first step is to obtain estimates for the operator $A + \lambda I$ with $\lambda$ large enough. For an arbitrary $u \in H^2 \cap H^1_0$, let $f = (A + \lambda I) u$. Multiplying by $-\lap u$ and integrating gives
\begin{align*}
\lambda \norm{\grad u}_{L^2}^2 + \norm{\lap u}_{L^2}^2
    &= \int_\Omega \grad p_s \cdot \lap u \, dx - \int_\Omega f \lap u \, dx\\
    &\leq \frac{1}{2} \norm{\lap u}_{L^2} ^2 + \frac{1}{2} \norm{\grad p_s}_{L^2}^2 + \frac{1}{16} \norm{\lap u}_{L^2}^2 + 4 \norm{f}_{L^2}^2\\
    &\leq \left( \frac{1}{2} + \frac{3}{8} + \frac{1}{16} \right) \norm{\lap u}_{L^2}^2 + c \norm{\grad u}_{L^2}^2 + 4 \norm{f}_{L^2}^2
\end{align*}
where the last inequality followed from Theorem~\ref{thmCommutatorEstimate}, and $c = c(\Omega)$ is a constant. This gives
$$
(\lambda - c) \norm{\grad u}_{L^2}^2 + \frac{1}{16} \norm{\lap u}_{L^2}^2 \leq 4 \norm{f}_{L^2}^2.
$$
Thus, when  $\lambda > c$, we immediately see
\begin{equation}\label{eqnAplusLambdaIinvBound}
\norm{u}_{H^2} \leq C \norm{f}_{L^2}  = C \norm*{ \left( A + \lambda I \right) u }_{L^2}, \quad\text{when } u \in H^2 \cap H^1_0.
\end{equation}

One can use the last relation to check that $A$ is closed.  We claim
further that $A+\lambda I$ is surjective for some large enough $\lambda$. 
This can be proved by a Neumann-series perturbation argument based on 
the identity
\begin{equation}
A+\lambda I = (I+B)(\lambda I-\lap), 
\end{equation}
where $B= \grad p_s\circ (\lambda I-\lap)\inv$. That is, 
\[
Bu=\grad p_s(v), \quad v=(\lambda I-\lap)\inv u.
\]
It suffices to prove that the operator norm of $B$ on $L^2$ is strictly
less than one, if $\lambda$ is positive and large enough.
By easy energy estimates, we have that
$\lambda\norm{v}_{L^2} \le \norm{u}_{L^2}$
and $\norm{\lap v}_{L^2} \le \norm{u}_{L^2}$.
Then due to Theorem~\ref{thmCommutatorEstimate} and interpolation, we have
\[
\norm{Bu}_{L^2}^2
= \int_\Omega|\grad p_s(v)|^2
\leq 
\beta \norm{\lap v}_{L^2}^2 
+C_\beta \norm{v}_{L^2}^2 
\le 
\left(\beta + C_\beta\lambda^{-2}\right) \norm{u}_{L^2}^2,
\]
and the coefficient on the right is less than 1 for $\lambda$ large
enough. Thus $I+B$ is an isomorphism on $L^2$, hence $A+\lambda I$ is
surjective.  

Further, the Rellich-Kondrachov compact embedding
theorem and the bound \eqref{eqnAplusLambdaIinvBound} imply that  
$A + \lambda I$ has compact inverse.
Since we have shown that the resolvent of $A$ contains at least one element with a compact inverse, the spectrum of $A$ consists only of (isolated) eigenvalues, of finite multiplicity (see for instance~\cite{bblKato}*{Theorem III.6.29}). Thus to prove invertibility of $A$, it  suffices to show that $0$ is not an eigenvalue of $A$.

To see this, 
suppose $u \in D(A)$ is such that $A u = 0$. Then $-P \lap u = \grad \divergence u \in L^2(\Omega)$. Since the range of the Leray projection (by definition) is orthogonal to gradients, we must have $\grad \divergence u = P \lap u = 0$, and hence $\divergence u$ must be constant. Since $\int_\Omega \divergence u = \int_{\del \Omega} u \cdot \nu = 0$, this forces $\divergence u =0$. Thus $u = Pu$, and is orthogonal to gradients. Since $A u = 0$, we have
$$
0 = \int_\Omega u \cdot A u \, dx = - \int_\Omega u \cdot \lap u \, dx + \int_\Omega P u \cdot \grad p_s(u) \, dx = \norm{\grad u}_{L^2}^2 + 0,
$$
forcing $u = 0$. Hence $0$ is not an eigenvalue of $A$, and we conclude that $A$ is invertible.

It remains to establish~\eqref{eqnAinvBound}. The upper bound follows immediately from~\eqref{eqnAdef} and~\eqref{eqnGradPsStupidUB}. To prove the lower bound, observe first that boundedness of $A\inv$ implies
\begin{equation}\label{eqnAinvBoundL2}
\norm{A\inv u}_{L^2} \leq C \norm{u}_{L^2},
\end{equation}
for some constant $C = C(\Omega)$, which we subsequently allow to change from line to line. Thus using the operator identity
$$
A\inv = (A + \lambda I)\inv \left( I + \lambda A\inv \right),
$$
and the inequalities~\eqref{eqnAplusLambdaIinvBound}, \eqref{eqnAinvBoundL2} we see
\begin{multline*}
\norm{A\inv u}_{H^2} =  \norm{ (A + \lambda I)\inv \left( I + \lambda A\inv \right) u}_{H^2} \\
\leq C \norm{\left( I + \lambda A\inv \right) u}_{L^2} \leq C (1 + C \lambda) \norm{u}_{L^2}
\end{multline*}
proving the lower bound in~\eqref{eqnAinvBound}.
\end{proof}

\begin{lemma}\label{lmaDensityOfDA2}
For the extended Stokes operator, $D(A^2)$ is dense in $D(A)$.
\end{lemma}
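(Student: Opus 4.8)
The plan is to realize $D(A^2)$-approximants of a given $u \in D(A)$ through the resolvent of $A$, exploiting the fact that the resolvent carries $D(A)$ into $D(A^2)$ and converges to the identity. Throughout, the relevant topology on $D(A)$ is the graph norm $\norm{u}_{L^2} + \norm{Au}_{L^2}$, which by Lemma~\ref{lmaAinv} is equivalent to the $H^2$ norm; so proving graph-norm density is exactly what is required.

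First I would fix $\lambda$ large enough that $R_\lambda \defeq (A + \lambda I)\inv$ is a bounded operator on $L^2$, as in the proof of Lemma~\ref{lmaAinv}. The key structural observation is that $R_\lambda$ commutes with $A$ on $D(A)$: for $u \in D(A)$ one has $A R_\lambda u = R_\lambda A u$, and since $Au \in L^2$ gives $R_\lambda Au \in D(A)$, this shows $A R_\lambda u \in D(A)$, i.e. $R_\lambda u \in D(A^2)$. Hence $\lambda R_\lambda$ maps $D(A)$ into $D(A^2)$. It then remains to show $\lambda R_\lambda u \to u$ in the graph norm as $\lambda \to \infty$.

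For the $L^2$ part, the identity $\lambda R_\lambda - I = -A R_\lambda = -R_\lambda A$ on $D(A)$ gives $\norm{\lambda R_\lambda u - u}_{L^2} = \norm{R_\lambda A u}_{L^2}$. The factorization $A + \lambda I = (I+B)(\lambda I - \lap)$ from Lemma~\ref{lmaAinv}, together with $\norm{(\lambda I - \lap)\inv}_{L^2 \to L^2} \le \lambda\inv$ and $\norm{B} < 1$ uniformly for large $\lambda$, yields $\norm{R_\lambda}_{L^2 \to L^2} \le C/\lambda$, whence $\norm{\lambda R_\lambda u - u}_{L^2} \le (C/\lambda)\norm{Au}_{L^2} \to 0$. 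For the graph part, the commuting relation gives $A(\lambda R_\lambda u) = \lambda R_\lambda(Au)$, so it suffices to know that $\lambda R_\lambda \to I$ strongly on all of $L^2$, applied to $Au$. This follows by the standard density argument: the same factorization gives $\norm{\lambda R_\lambda} = \norm{\lambda(\lambda I - \lap)\inv (I+B)\inv} \le \norm{(I+B)\inv} \le M$ uniformly (since $\lambda(\lambda I - \lap)\inv$ has norm $\le 1$), and $\lambda R_\lambda w \to w$ holds on the dense subspace $D(A)$ by the computation just given.

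Combining the two steps, the family $\{\lambda R_\lambda u\}$ lies in $D(A^2)$ and converges to $u$ in the graph norm, equivalently in $H^2$, which proves the density. The one point demanding care is the uniform-in-$\lambda$ control underlying both the resolvent decay $\norm{R_\lambda} \le C/\lambda$ and the uniform boundedness of $\lambda R_\lambda$; both reduce to keeping $\norm{B}$ bounded away from $1$ as $\lambda \to \infty$, which the estimate $\norm{B}^2 \le \beta + C_\beta \lambda^{-2}$ from Lemma~\ref{lmaAinv} supplies. As an alternative, since $A$ is sectorial one may instead take $u_t = e^{-tA}u$, which lies in $D(A^\infty) \subset D(A^2)$ for $t > 0$ by the smoothing property of the analytic semigroup and satisfies $e^{-tA}u \to u$ in the graph norm as $t \to 0^+$; I would present the resolvent argument as the primary one, since it is self-contained from Lemma~\ref{lmaAinv}.
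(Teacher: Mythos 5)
Your proof is correct, but it takes a genuinely different route from the paper's. The paper's own argument is a three-line application of Lemma~\ref{lmaAinv}: given $u \in D(A)$, approximate $v = Au$ in $L^2$ by $v_n \in H^2 \cap H^1_0 = D(A)$ (using density of $H^2\cap H^1_0$ in $L^2$), and set $u_n = A\inv v_n$; then $Au_n = v_n \in D(A)$ gives $u_n \in D(A^2)$, and the lower bound in~\eqref{eqnAinvBound} gives $\norm{u_n - u}_{H^2} \leq c \norm{Au_n - Au}_{L^2} = c\norm{v_n - v}_{L^2} \to 0$. This leverages precisely the two facts Lemma~\ref{lmaAinv} supplies: invertibility of $A$ and the equivalence of $\norm{u}_{H^2}$ with $\norm{Au}_{L^2}$. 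Your argument is instead a Yosida-type approximation $\lambda R_\lambda u$ with $R_\lambda = (A+\lambda I)\inv$, and all its steps check out: the commutation $AR_\lambda = R_\lambda A$ on $D(A)$, the inclusion $R_\lambda(D(A)) \subset D(A^2)$, the decay $\norm{R_\lambda}_{L^2\to L^2} \leq C/\lambda$, and the uniform boundedness of $\lambda R_\lambda$ all follow from the factorization $A + \lambda I = (I+B)(\lambda I - \lap)$ with $\norm{B}$ bounded away from $1$ uniformly for large $\lambda$, which the proof of Lemma~\ref{lmaAinv} indeed provides. What your route buys is generality and robustness: it uses only the resolvent at large $\lambda$, never $A\inv$ itself, so it would survive if $0$ belonged to the spectrum of $A$; it is the standard semigroup-theoretic argument (and your alternative via $e^{-tA}$ is the same idea in continuous time). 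What it costs is length --- and note that at the final step you still invoke Lemma~\ref{lmaAinv} to identify the graph norm with the $H^2$ norm, which uses invertibility anyway, so in the present setting the extra generality goes unused and the paper's direct construction $u_n = A\inv v_n$ is the more economical proof.
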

\begin{proof}
Let $u \in D(A)$, and $v = Au$. Since $v \in L^2(\Omega)$, we can find $v_n \in H^2 \cap H^1_0$ such that $(v_n) \to v$ in $L^2$. Since $D(A) = H^2 \cap H^1_0$ by letting $u_n \defeq A\inv v_n$, we have  $u_n \in D(A^2)$. Finally, by Lemma~\ref{lmaAinv} we see
$$
\norm{u_n - u}_{H^2} \leq c \norm{A u_n - A u}_{L^2} = c \norm{v_n - v}_{L^2} \to 0,
$$
concluding the proof.
\end{proof}

\subsection{\texorpdfstring{$H^1$}{H1}-equivalent coercivity on \texorpdfstring{$D(A)$}{D(A)}.}
Lemmas~\ref{lmaAdjustedIP} and~\ref{lmaDensityOfDA2} quickly imply Proposition~\ref{ppnAdjustedIP}.
\begin{proof}[Proof of Proposition~\ref{ppnAdjustedIP}]
Let $u \in D(A)$. By Lemma~\ref{lmaDensityOfDA2}, there exists a sequence $u_n \in D(A^2)$ such that $(u_n) \to u$ in $H^2$. By Lemma~\ref{lmaAdjustedIP}, there exists constants $c(\Omega), C_\epsilon(\Omega) > 0$ such that
$$
\ip{u_n}{Au_n}_{\epsilon} \geq \frac{1}{c} \left( \norm{\grad u_n}_{L^2}^2 + \epsilon \norm{\lap u_n}_{L^2}^2 + C_\epsilon \norm{\grad q_n}_{L^2}^2\right),
$$
where $q_n$ is the unique, mean-zero function such that $\grad q_n = (I - P) u_n$. Since $(u_n) \to u$ in $H^2$, taking limits as $n \to \infty$ yields~\eqref{eqnCoercivicity}. Now using the Poincar\'e inequality,~\eqref{eqnCoercivicity2} follows.
\end{proof}

\subsection{\texorpdfstring{$\hdiv$}{H-div} equivalent coercivity.}
We conclude this section by proving coercivity under an
$\hdiv$-equivalent inner product. 
The rest of this paper is independent of this result and its proof.
\begin{proposition}[$\hdiv$-coercivity]\label{ppnAdjustedIPPrime}
Let $\Omega \subset \R^3$ be a $C^3$ domain. There exists positive constants $\epsilon_0 > 0$ and $c = c(\Omega)$ such that for all $\epsilon \in (0, \epsilon_0)$, there exists a constant $C_{\epsilon} = C_\epsilon(\Omega) > 0$ such that the following hold.
\begin{enumerate}
\item Let $\ip{\cdot}{\cdot}_{\epsilon}'$ be defined by
\begin{multline}\label{eqnAdjustedIPPrime}
\ip{u}{v}_{\epsilon}' \defeq \ip{u}{v} + \epsilon \ip{\divergence u}{\divergence v} + C_{\epsilon} \ip{A\inv u}{A\inv v}_\epsilon \mathop-\\
    \mathop- \ip{u}{\grad p_s( A\inv v)} - \ip{\grad p_s(A\inv u)}{v},
\end{multline}
where $\ip{u}{v}$ is the standard inner product on $L^2(\Omega)$, and $\ip{\cdot}{\cdot}_\epsilon$ denotes the inner product from Proposition~\ref{ppnAdjustedIP}. Then
\begin{equation}\label{eqnPositivity}
\frac{1}{c} \norm{u}_{L^2} + \epsilon \norm{\divergence u}_{L^2}^2 \leq \ip{u}{u}'_{\epsilon} \leq c (1 + C_{\epsilon}) \norm{u}_{L^2}^2 + \epsilon \norm{\divergence u}_{L^2}^2
\end{equation}
for any $u \in L^2(\Omega)$ with $\divergence u \in L^2(\Omega)$ and
$u\cdot\nu=0$ on $\partial\Omega$.
\item For any $u \in H^2\cap H^1_0$ with $\divergence u \in H^1$, we have
\begin{gather}
\label{eqnPositivityOfA} \ip{u}{A u}'_{\epsilon} \geq \norm{\grad u}_{L^2}^2 + \frac{\epsilon}{2} \norm{\grad \divergence u}_{L^2}^2\\
\llap{\text{and}\qquad}
\label{eqnPositivityOfA2} \ip{u}{A u}'_{\epsilon} \geq \frac{1}{c} \ip{u}{u}'_{\epsilon}.
\end{gather}
\end{enumerate}
\end{proposition}
%
\begin{proof} 
We begin by proving~\eqref{eqnPositivity} for all $u \in D(A)$. Density of $D(A)$ in $L^2$ and a standard approximation argument will now establish~\eqref{eqnPositivity} for all $u \in \hdiv$. We will assume $C_{\epsilon}$ and $c$ are constants that can change from line to line, provided their dependence on parameters is as required in the Proposition. Let $u \in D(A)$, and $v = A\inv u$. Then from~\eqref{eqnAdjustedIPPrime} we have
\begin{equation}\label{eqnIpUUEpsilonPrime}
\begin{aligned}
\ip{u}{u}'_{\epsilon} &= \ip{Av}{Av} - 2 \ip{Av}{\grad p_s(v)} + \epsilon \norm{\divergence u}_{L^2}^2 + C_{\epsilon} \ip{v}{v}_\epsilon \\
    &= \ip{-\lap v + \grad p_s(v)}{-\lap v - \grad p_s(v)} + \epsilon \norm{\divergence u}_{L^2}^2 + C_{\epsilon} \ip{v}{v}_\epsilon\\
    &= \norm{\lap v}_{L^2}^2 - \norm{\grad p_s(v)}_{L^2}^2 + \epsilon \norm{\divergence u}_{L^2}^2 + C_{\epsilon} \ip{v}{v}_\epsilon\\
    &\geq \frac{1}{4} \norm{\lap v}_{L^2}^2 - c \norm{v}_{L^2}^2 + \epsilon \norm{\divergence u}_{L^2}^2 + C_{\epsilon} \ip{v}{v}_\epsilon
\end{aligned}
\end{equation}
where the last inequality followed from Theorem~\ref{thmCommutatorEstimate} and interpolation. Now since $Av = u$, we immediately see
$$
\norm{u}_{L^2}^2 = \norm{- P \lap v - \grad \divergence v}_{L^2}^2 \leq c \norm{\lap v}_{L^2}^2.
$$
Finally, by definition of $\ip{v}{v}_\epsilon$, we have $\ip{v}{v}_\epsilon \geq \norm{v}_{L^2}^2$. Thus if $C_{\epsilon} > c$,  the lower bound in equation~\eqref{eqnPositivity} will hold for all $\epsilon > 0$.

For the upper bound in~\eqref{eqnPositivity}, observe that by definition of $\grad p_s$, and Lemma~\ref{lmaAinv} we have
$$
\norm{\grad p_s A\inv u}_{L^2} \leq c \norm{A\inv u}_{H^2} \leq c \norm{u}_{L^2}.
$$
Combined with the estimate $\norm{A\inv u}_{L^2} \leq c \norm{u}_{L^2}$, which is also a consequence of Lemma~\ref{lmaAinv}, we immediately obtain the upper bound in~\eqref{eqnPositivity}.\smallskip

Finally, it remains to prove the inequality~\eqref{eqnPositivityOfA}. We will prove~\eqref{eqnPositivityOfA} for $u \in D(A^2)$; since $D(A^2)$ is dense in $D(A)$, the same approximation argument from the proof of 
Proposition~\ref{ppnAdjustedIP}
will show that~\eqref{eqnPositivityOfA} holds on $D(A)$. In keeping with the above notation, we again set $v = A \inv u$. This gives
$$
\ip{u}{Au}'_{\epsilon} = \ip{u}{Au} - \ip{u}{\grad p_s(u)} - \ip{\grad p_s(v)}{Au} + \epsilon \ip{\divergence u}{\divergence Au} + C_{\epsilon} \ip{v}{Av}_\epsilon.
$$
We deal with the terms on the right individually. Combining the first two terms, the dangerous term involving the Stokes pressure cancels. This gives
$$
\ip{u}{Au} - \ip{u}{\grad p_s(u)} = \ip{u}{-\lap u} = \norm{\grad u}_{L^2}^2.
$$
For the third term,
$$
-\ip{\grad p_s(v)}{Au}  = -\ip{\grad p_s(v)}{(I - P) Au} =  \ip{\grad p_s(v)}{\grad \divergence u}.
$$
For the fourth term, observe that if $u \in D(A^2)$, then $Au = 0$ on $\del \Omega$. Thus integrating by parts gives
$$
\ip{\divergence u}{\divergence Au} = - \ip{\grad \divergence u}{A u} =  \norm{\grad \divergence u}_{L^2}^2.
$$
Combining these identities we have
\begin{align*}
\ip{u}{Au}'_{\epsilon} &= \norm{\grad u}_{L^2}^2 + \ip{\grad p_s(v)}{\grad \divergence u} + \epsilon \norm{\grad \divergence u}_{L^2}^2 + C_{\epsilon} \ip{v}{Av}_\epsilon\\
     & \geq \norm{\grad u}_{L^2}^2 + \frac{\epsilon}{2} \norm{\grad \divergence u}_{L^2}^2 - \frac{1}{2 \epsilon} \norm{\grad p_s(v)}_{L^2}^2 + C_{\epsilon} \ip{v}{Av}_\epsilon\\
     & \geq \norm{\grad u}_{L^2}^2 + \frac{\epsilon}{2} \norm{\grad \divergence u}_{L^2}^2 - \frac{c}{\epsilon} \norm{\lap v}_{L^2}^2 + C_{\epsilon} \ip{v}{Av}_\epsilon\\
& \geq  \norm{\grad u}_{L^2}^2 + \frac{\epsilon}{2} \norm{\grad \divergence u}_{L^2}^2 + \left( \frac{\epsilon C_{\epsilon}}{2 c_1} - \frac{c}{\epsilon} \right) \norm{\lap v}_{L^2}^2 + \frac{C_\epsilon}{2} \ip{v}{Av}_\epsilon
\end{align*}
where the last inequality followed from Proposition~\ref{ppnAdjustedIP}, and $c_1$ is the constant in~\eqref{eqnCoercivicity}. We note that Proposition~\ref{ppnAdjustedIP} guarantees that $c_1$ is independent of $\epsilon$. Now we choose $C_{\epsilon}$ large enough so that $\frac{\epsilon C_{\epsilon}}{2c_1} - \frac{c}{\epsilon} > 1$, giving
\begin{equation}\label{eqnIpUAuEpsilonPrime}
\ip{u}{Au}'_{\epsilon} \geq
\norm{\grad u}_{L^2}^2 + \frac{\epsilon}{2} \norm{\grad \divergence u}_{L^2}^2 + \norm{\lap v}_{L^2}^2 + \frac{C_\epsilon}{2} \ip{v}{Av}_\epsilon,
\end{equation}
from which inequality \eqref{eqnPositivityOfA} follows.

Finally for~\eqref{eqnPositivityOfA2}, observe that from~\eqref{eqnIpUUEpsilonPrime} we have
\begin{align*}
\ip{u}{u}'_{\epsilon} &= \norm{\lap v}_{L^2}^2 - \norm{\grad p_s(v)}_{L^2}^2 + \epsilon \norm{\divergence u}_{L^2}^2 + C_{\epsilon} \ip{v}{v}_\epsilon\\
    &\leq c_2 \norm{\lap v}_{L^2}^2 + \epsilon \norm{\divergence u}_{L^2}^2 + C_{\epsilon} \ip{v}{v}_\epsilon,
\end{align*}
for some constant $c_2 = c_2(\Omega)$. Now using Proposition~\ref{ppnAdjustedIP} and~\eqref{eqnIpUAuEpsilonPrime}, the inequality~\eqref{eqnPositivityOfA2} follows.
\end{proof}

\section{Decay of a non-quadratic form energy.}\label{sxnExtStokesExpDecay}
This section comprises the proof of Proposition~\ref{ppnStokesEnergyDecay2}, addressing the long time behaviour of solutions to the extended Stokes equations~\eqref{eqnExtendedStokes}. The result and proof are independent of the rest of this paper.
\begin{proof}[Proof of Proposition~\ref{ppnStokesEnergyDecay2}]
In this proof, we use $C$ to denote an intermediate constant that depends only on $\Omega$ whose value can change from line to line. We use $C_1, C_2, \dots$ to denote fixed positive constants that depend only on $\Omega$, whose values \emph{do not} change from line to line.

As usual, let $q = Q(u)$ be the unique mean zero function such that $\grad q = (I - P) u$. We begin by establishing the energy inequalities
\begin{gather}
\label{eqnUEnergy}
    \del_t \norm{u}_{L^2}^2 + 2 \norm{\grad u}_{L^2}^2 \leq C_1 \norm{\grad q}_{L^2} \norm{\lap u}_{L^2},\\
\label{eqnGradUEnergy2}
    \del_t \norm{\grad u}_{L^2}^2 + \frac{1}{4} \norm{\lap u}_{L^2}^2 \leq C \norm{\grad u}_{L^2}^2,\\
\label{eqnDivEnergy1}
    \del_t \norm{\grad q}_{L^2}^2 +  2 \norm{\divergence u}_{L^2}^2 \leq 0.
\end{gather}
Before proving the above inequalities, we remark that the $L^2$ balance~\eqref{eqnUEnergy} does not close by itself. On the other hand, the $H^1$ balance~\eqref{eqnGradUEnergy2} closes, but does not give decay. A combination of the norms, however, gives us the desired exponential decay.\smallskip

For the proof of~\eqref{eqnUEnergy}, multiply~\eqref{eqnExtendedStokes} by $u$ and integrate over $\Omega$ to obtain
\begin{multline*}
\frac{1}{2} \del_t \norm{u}_{L^2}^2 + \norm{\grad u}_{L^2}^2
    = - \int_{\Omega} u \cdot \grad p_s
    = - \int_{\Omega} \grad q \cdot \grad p_s
    \leq C_1 \norm{\grad q}_{L^2} \norm{\lap u}_{L^2},
\end{multline*}
where we used \eqref{i:pslap}.
This establishes~\eqref{eqnUEnergy}.\smallskip

Turning to~\eqref{eqnGradUEnergy2}, we multiply~\eqref{eqnExtendedStokes} by $-\lap u$ and integrate over $\Omega$ to obtain
\begin{equation}\label{eqnGradUEnergy1}
\frac{1}{2} \del_t \norm{\grad u}_{L^2}^2 + \norm{\lap u}_{L^2}^2 = \int_\Omega \grad p_s \cdot \lap u.
\end{equation}
Using Theorem~\ref{thmCommutatorEstimate}, we know that for any $\delta > 0$ there exists a constant $C_\delta = C_\delta(\Omega)$ such that
$$
\norm{\grad p_s}_{L^2}^2 \leq \left( \frac{1 + \delta }{2} \right) \norm{\lap u}_{L^2}^2 + C_\delta \norm{\grad u}_{L^2}^2.
$$
Hence
\begin{equation}\label{eqnGradPsLapUBound}
\abs*{\int_\Omega \grad p_s \cdot \lap u}
    \leq \frac{1}{2} \norm{\grad p_s}_{L^2}^2 + \frac{1}{2}\norm{\lap u}_{L^2}^2
    \leq \left( \frac{3 + \delta}{4} \right) \norm{\lap u}_{L^2}^2+  \frac{1}{2} C_\delta \norm{\grad u}_{L^2}^2.
\end{equation}
Choosing $\delta = \frac{1}{2}$, equation~\eqref{eqnGradUEnergy1} reduces to~\eqref{eqnGradUEnergy2} as desired.\smallskip

Finally for~\eqref{eqnDivEnergy1}, we apply $(I - P)$ to~\eqref{eqnExtendedStokes} to get
$$
\del_t \grad q + (I - P)(- \lap u + \grad p_s(u) ) = 0.
$$
Since $\divergence u = \lap q$ and
$$
(I - P) (-\lap u + \grad p_s(u) ) = (I - P)( P \lap u - \grad \divergence u) = \grad \divergence u = \grad \lap q,
$$
we see
$$
\del_t \grad q - \lap \grad q = 0,
$$
and hence $\del_t q - \lap q = C(t)$, where $C$ is constant in space. Now, since $u = 0$ on $\del \Omega$, we must have $\frac{\del q}{\del \nu} = 0$ on $\del \Omega$. This means $\int_\Omega \lap q = 0$; since $\int_\Omega q = 0$ by our choice of $q$, we must have $C(t) = 0$. Thus we obtain
\begin{equation}\label{eqnHeatQ}
\del_t q - \lap q = 0 \quad\text{in }\Omega,\\
\qquad\text{with }
\frac{\del}{\del \nu} q = 0 \text{ for }x \in \del \Omega.
\end{equation}
Multiplying by $-\lap q$ and integrating over $\Omega$ gives~\eqref{eqnDivEnergy1} as desired.\medskip

Now we combine~\eqref{eqnUEnergy}--\eqref{eqnDivEnergy1} to obtain the desired exponential decay. First from~\eqref{eqnGradUEnergy2}, \eqref{eqnDivEnergy1} and the Poincar\'e inequality we have
\begin{gather*}
\del_t \norm{\grad q}_{L^2} + \frac{1}{C_2} \norm{\grad q}_{L^2} \leq 0,\\
\del_t \norm{\grad u}_{L^2} + \frac{1}{C_2} \norm{\lap u}_{L^2} \leq C_3\norm{\grad u}.
\end{gather*}
Thus
$$
\del_t \left( \norm{\grad u}_{L^2} \norm{\grad q}_{L^2} \right) + \frac{1}{C_2} \norm{\lap u}_{L^2} \norm{\grad q}_{L^2} \leq C_4 \norm{\grad u}_{L^2} \norm{\grad q}_{L^2}.
$$
where $C_4 = C_3 - \frac{1}{C_2}$. Using this in~\eqref{eqnUEnergy} we see
\begin{multline*}
\del_t \left( \norm{u}_{L^2}^2 + C_2 (C_1 + 1) \norm{\grad u}_{L^2} \norm{\grad q}_{L^2} \right) \mathop + \\
    + 2 \norm{\grad u}_{L^2}^2 + \norm{\lap u}_{L^2} \norm{\grad q}_{L^2} \leq C_5 \norm{\grad u}_{L^2} \norm{\grad q}_{L^2}
\end{multline*}
where $C_5 = C_2 (C_1 + 1) C_4$. Now letting $c_1 = C_2 ( C_1 + 1)$, and $c_2 = c_2(\Omega, \epsilon)$ to be chosen later, we see that
\begin{multline*}
\del_t \left( \norm{u}_{L^2}^2 + c_1 \norm{\grad u}_{L^2} \norm{\grad q}_{L^2} + c_2 \norm{\grad q}_{L^2}^2 \right) \mathop+\\
    + 2 \norm{\grad u}_{L^2}^2 + \norm{\lap u}_{L^2} \norm{\grad q}_{L^2} + 2 c_2 \norm{\divergence u}_{L^2}^2
    \leq \epsilon \norm{\grad u}_{L^2}^2 + \frac{C_5^2}{2 \lambda_1 \epsilon} \norm{\divergence u}_{L^2}^2
\end{multline*}
where $\lambda_1$ is the best constant in the Poincar\'e inequality
$$
\lambda_1 \norm{\grad q}_{L^2}^2 \leq \norm{\lap q}_{L^2}^2 = \norm{\divergence u}_{L^2}^2.
$$
Thus choosing $c_2 = \frac{C_5^2}{4 \lambda_1 \epsilon} + \frac{1}{2}$, we obtain~\eqref{eqnNonLinearEnergyDecay}.
\end{proof}

\section{Global stability of time discretization for the extended Stokes equations}\label{sxnTimeDiscrete}

We devote this section to proving Proposition~\ref{prop:disc}. The main idea again is similar: to introduce a stabilizing, high order term in the definition of the energies.
\begin{proof}[Proof of Proposition~\ref{prop:disc}]
In the following we use $C_1, C_2, \dots$ to denote fixed positive constants that depend only on $\Omega$, whose values \emph{do not} change from line to line and a generic constant $C$ whose value might change from one line to the next, depending only on $\Omega$.

Let $p_s^n$ be the Stokes pressure for $u^n$ hence  $\grad p_s^n=(\lap P-P\lap)u^n$ and  thus
$$
\int_\Omega \grad p^n_s\cdot\grad\varphi=\int_\Omega (\lap u^n-\grad\divergence u^n)\cdot\grad\varphi,\quad\forall\varphi\in H^1(\Omega).
$$
Using~\eqref{disceq:weakpressure} with $\varphi = p^n$ and combining it with the last relation we obtain
\begin{equation}\label{disceq:pressures}
\norm{\grad p^n}_{L^2}\leq \norm{\grad p^n_s}_{L^2}+\norm{f^n}_{L^2}.
\end{equation}

We derive first the discrete $H^1$ estimate just as in \cite{bblLiuLiuPego}. 
Taking the $L^2$ inner product of~\eqref{disceq:weakStokes} with $-\lap u^{n+1}$ gives
\begin{align*}
\MoveEqLeft \frac{1}{2\delta t}\left(\norm{\grad u^{n+1}}_{L^2}^2-\norm{\grad u^n}_{L^2}^2+\norm{\grad u^{n+1}-\grad u^n}_{L^2}^2\right)+\norm{\lap u^{n+1}}_{L^2}^2\\
&\leq \norm{\lap u^{n+1}}_{L^2}\left(2\norm{f^n}_{L^2}+\norm{\grad p_s^n}_{L^2}\right)\\
&\leq \frac{\epsilon_1}{2}\norm{\lap u^{n+1}}_{L^2}^2+
\frac{2}{\epsilon_1}\norm{f^n}_{L^2}^2+\frac{1}{2}(\norm{\lap u^{n+1}}_{L^2}^2+\norm{\grad p_s^n}_{L^2}^2)
\end{align*}
for all $\epsilon_1 > 0$. This implies
\begin{multline}\label{disceq:H1est}
\frac{1}{\delta t}\left(\norm{\grad u^{n+1}}_{L^2}^2-\norm{\grad u^n}_{L^2}^2\right)+\frac{1}{\delta t}\norm{\grad u^{n+1}-\grad u^n}_{L^2}^2 \mathop+\\
    + (1-\epsilon_1)\norm{\lap u^{n+1}}_{L^2}^2
    \leq \frac{4}{\epsilon_1}\norm{f^n}_{L^2}^2+\norm{\grad p^n_s}_{L^2}^2.
\end{multline}

Fix any $\beta\in (\frac{1}{2},\frac{2}{3})$. By Theorem~\ref{thmCommutatorEstimate} we have
$$
\norm{\grad p^n_s}_{L^2}^2\leq \frac{3}{2}\beta\norm{\lap u^n}_{L^2}^2+\frac{2C_\beta}{3}\norm{\grad u^n}_{L^2}^2.
$$
Using this in~\eqref{disceq:H1est} and dividing by $2C_\beta$ we get
\begin{multline}\label{disceq:H1est+}
\frac{1}{2C_\beta\delta t}\left(\norm{\grad u^{n+1}}_{L^2}^2-\norm{\grad u^n}_{L^2}^2+\norm{\grad u^{n+1}-\grad u^n}_{L^2}^2\right)\\
+\frac{(1-\epsilon_1)}{2C_\beta}\left(\norm{\lap u^{n+1}}_{L^2}^2-\norm{\lap u^n}_{L^2}^2\right)+\frac{(2-2\epsilon_1-3\beta)}{4C_\beta}\norm{\lap u^n}_{L^2}^2\\
\leq \frac{2}{\epsilon_1C_\beta}\norm{f^n}_{L^2}^2+\frac{1}{3}\norm{\grad u^n}_{L^2}^2,
\end{multline}
and we may assume that $\epsilon_1>0$ is small enough so that $1-\epsilon_1-\frac{3}{2}\beta>0$.

We continue by obtaining the discrete $L^2$ estimate. We dot the equation \eqref{disceq:weakStokes} by $u^{n+1}$  in $L^2$ and obtain
\begin{align}
\MoveEqLeft
\begin{multlined}
\nonumber
\frac{1}{2\delta t}\left(\norm{u^{n+1}}_{L^2}^2-\norm{u^n}_{L^2}^2+\norm{u^{n+1}-u^n}_{L^2}^2\right)\\
+\frac{1}{3}\norm{\grad u^{n+1}}_{L^2}^2+\frac{2}{3}\left(\norm{\grad u^{n+1}}_{L^2}^2-\norm{\grad u^n}_{L^2}^2\right)+\frac{2}{3}\norm{\grad u^n}_{L^2}^2
\end{multlined}\\
    \nonumber
    &= \int_\Omega (f^n-\grad p^n) \cdot u^{n+1}
= \int_\Omega (Pf^n)\cdot u^{n+1}-\int_\Omega \grad p_s(u^n)\cdot\grad q^{n+1}
\\ \nonumber
&\leq 
\norm{f^n}_{L^2} 
\norm{u^{n+1}}_{L^2}
+
\norm{\grad p_s(u^n)}_{L^2}
\norm{\grad q^{n+1}}_{L^2}
\\
    &\leq 
\frac{\lambda_0}{3}\norm{u^{n+1}}_{L^2}^2
+ \frac{2-2\epsilon_1-3\beta}{8C_\beta} \norm{\lap u^n}_{L^2}^2
+
C_1( \norm{f^n}_{L^2}^2 +\norm{\grad q^{n+1}}_{L^2}^2),
    \label{disceq:L2est}
\end{align}
where $q^{n+1} = Q(u^{n+1})$, and $\lambda_0$ is the principal eigenvalue of the Laplacian on $\Omega$ with zero Dirichlet boundary conditions.

Since $p^n=p_s^n+Q(f^n)$, by applying $I-P$ to
\eqref{disceq:weakStokes} we find that 
$q^n$ satisfies the time-discrete inhomogeneous heat equation
\begin{equation}
\frac1{\delta t}\left( q^{n+1}-q^n\right) -\lap q^{n+1} = 
p_s^{n+1}-p_s^n .
\end{equation}
Then we find after testing with $-\lap q^{n+1}$ that, 
as above (and as in~\cite{bblLiuLiuPego}*{page 1477}), 
\begin{equation}\label{eqn12191}
\frac{1}{\delta t}\left(\norm{\grad q^{n+1}}_{L^2}^2-\norm{\grad q^n}_{L^2}^2\right)+\norm{\lap q^{n+1}}^2\leq \norm{p_s^{n+1}-p_s^n}_{L^2}^2.
\end{equation}
Also, since $p^n$ satisfies a Neumann boundary value problem, 
we have the estimate 
\begin{equation}\label{eqn12192}
\norm{p_s^{n+1}-p^n_s}_{L^2}^2\leq C\norm{u^{n+1}-u^n}_{L^2}^{\frac{1}{2}}\norm{u^{n+1}-u^n}_{H^2}^{\frac{3}{2}}.
\end{equation}
Now choose $C_2$ large enough to ensure
$$
C_1\norm{\grad q^{n+1}}_{L^2}^2\leq \frac{ C_2}{2}\norm{\lap q^{n+1}}_{L^2}^2,
$$
and $\epsilon_2$ small enough so that $4\epsilon_2<1-\epsilon_1-\beta$. Combining~\eqref{eqn12191} and~\eqref{eqn12192}, we obtain
\begin{align}\label{disceq:divest}
&\frac{1}{\delta t}\left(\norm{\grad q^{n+1}}_{L^2}^2-\norm{\grad q^n}_{L^2}^2\right)+\norm{\lap q^{n+1}}^2\\
&\qquad    \leq \frac{\epsilon_2}{4C_\beta C_2}\norm{\lap u^{n+1}-\lap u^n}_{L^2}^2+C_3\norm{u^{n+1}-u^n}_{L^2}^2
\nonumber\\
 &\qquad   \leq \frac{\epsilon_2}{4C_\beta C_2}\norm{\lap u^{n+1}-\lap u^n}_{L^2}^2+C_4\norm{\grad u^{n+1}-\grad u^n}_{L^2}^2
\nonumber
\end{align}
for large enough constants $C_3$ and $C_4$.

Assume that $\delta t$ is small enough so that
$(2C_\beta C_2C_4)\delta t<1$.
Multiplying \eqref{disceq:divest}  by $C_2$, and adding it to~\eqref{disceq:H1est+} and \eqref{disceq:L2est} then gives
\begin{multline}\label{disceq:finalneq}
\frac{C_2}{\delta t}\left(\norm{\grad q^{n+1}}_{L^2}^2-\norm{\grad q^n}_{L^2}^2\right)
+\frac{C_2}{2}\norm{\lap q^{n+1}}_{L^2}^2 
+\frac{1}{2\delta t}\left(\norm{u^{n+1}}_{L^2}^2-\norm{u^n}_{L^2}^2\right)
\\
+\left(\frac{1}{2C_\beta\delta t}+\frac23\right)\left(\norm{\grad u^{n+1}}_{L^2}^2-\norm{\grad u^n}_{L^2}^2\right)
+\frac{1}{3}\norm{\grad u^n}_{L^2}^2\\
+\frac{1-\epsilon_1}{2C_\beta}\left(\norm{\lap u^{n+1}}_{L^2}^2-\norm{\lap u^n}_{L^2}^2\right)
+\frac{2-2\epsilon_1-3\beta}{8C_\beta}\norm{\lap u^n}_{L^2}^2\\
\leq \frac{\epsilon_2}{2C_\beta}\left(\norm{\lap u^{n+1}}^2+\norm{\lap u^n}_{L^2}^2\right)
+\left(\frac{2}{\epsilon_1 C_\beta}+C_1\right)\norm{f^n}_{L^2}^2.
\end{multline}
Now summing from $n=0$ to $N$ the last inequality gives (for small enough $\delta t$, and for a suitable constant $C>0$) the claimed inequality \eqref{eqnDiscEnergyIneq}.


We rearrange \eqref{disceq:finalneq} and  obtain
\begin{multline}\label{disceq:largerecurrence}
\frac{1}{\delta t}\left(\|\grad q^{n+1}\|_{L^2}^2-(1-\hat C\delta t)\|\grad q^n\|_{L^2}^2\right) \mathop+\\
    \frac{1}{\delta t}\left(\|u^{n+1}\|_{L^2}^2-(1-\hat C\delta t)\|u^n\|_{L^2}^2\right) + \frac{1}{\delta t}\left(\|\grad u^{n+1}\|_{L^2}^2-(1-\hat C\delta t)\|\grad u^n\|_{L^2}^2\right) \mathop+\\
    +\left (\|\grad u^{n+1}\|_{L^2}^2-(1-\hat C\delta t)\|\grad u^n\|_{L^2}^2\right) +\\
    + (1+\delta t)\left(\|\lap u^{n+1}\|_{L^2}^2-(1-\hat C\delta t)\|\lap u^n\|_{L^2}^2\right) \mathop+\\
    \left(\|\lap q^{n+1}\|_{L^2}^2-(1-\hat C\delta t)\|\lap q^n\|^2\right) \mathop+ (1-\hat C\delta t)\|\lap q^n\|_{L^2}^2
\leq C\|f^n\|_{L^2}^2
\end{multline}
provided that $\delta t$ is small enough, for suitable constants $C$ and $\hat C$.
Defining \begin{multline*}
a_n\defeq\frac{1}{\delta t}\|\grad q^n\|_{L^2}^2+\frac{1}{\delta t}\|u^n\|_{L^2}^2+\frac{1}{\delta t}\|\grad u^n\|_{L^2}^2\\
+\|\grad u^n\|_{L^2}^2+(1+\delta t)\|\lap u^n\|_{L^2}^2+\|\lap q^n\|_{L^2}^2,
\end{multline*}
\eqref{disceq:largerecurrence} becomes
\begin{equation}\label{disceq:smallrecurrence}
a_{n+1}-(1-\hat C\delta t)a_n\leq  C\|f_n\|_{L^2}^2 .
\end{equation}
Solving this recurrence relation yields~\eqref{eqnDiscExpDecay}.
\end{proof}
\section{Small data global existence for the extended Navier-Stokes equations}\label{sxnSmallData}
This section is devoted to the proof of a long time, small data existence result (Theorem~\ref{thmSmallDataGexist}) for the system~\eqsysENS. As bounds for the linear terms have already been established (Proposition~\ref{ppnAdjustedIP}), we begin with a bound on the nonlinear term. When obtaining energy estimates for solutions to~\eqref{eqnNS}, the explicit, exponential decay of $\divergence u$ allows sharper estimates for many terms. However, in order to exploit coercivity of the linear terms, we are forced to use an $H^1$-equivalent inner product. In this case, the `worst' term that arises from the nonlinearity isn't aided by decay of $\divergence u$, and must be estimated brutally. Consequently, estimating the remaining terms similarly doesn't weaken the final result. Thus, we begin with a lemma that provides a `brutal' estimate on the nonlinearity.

\begin{lemma}\label{lmaNonGenEst}
Let $f, g, h\in H^2\cap H^1_0(\Omega)$ with $\Omega\subset \mathbb{R}^d,d=2,3$  a bounded domain with $C^3$ boundary. Then there exists a constant $C = C(\Omega) > 0$ such that%
\footnote{Our estimates on the nonlinear term are not optimal. Using `optimal' estimates here would be at the expense of simplicity, and obfuscate the main idea. Further, the `optimal' estimates are still insufficient to prove global existence without a smallness assumption on the initial data.}
$$
\abs{\ip{P((f\cdot\grad) g)}{h}_\epsilon} = C \norm{f}_{H^1}\norm{\grad g}_{H^{\frac{1}{2}} }\norm{\lap h}_{L^2}.
$$
\end{lemma}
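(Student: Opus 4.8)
The plan is to expand $\ip{\cdot}{\cdot}_\epsilon$ using~\eqref{d:altip}, exploit that $w:=P((f\cdot\grad)g)$ lies in the range of the Leray projection, and reduce the whole quantity to the single $L^2$ product bound $\norm{(f\cdot\grad)g}_{L^2}\le C\norm{f}_{H^1}\norm{\grad g}_{H^{1/2}}$, arranging every surplus derivative to fall on $h$ so that only $\norm{\lap h}_{L^2}$ is charged. Write $v:=(f\cdot\grad)g$. Since $f\in H^1_0$ vanishes on $\del\Omega$ we have $v=0$ on $\del\Omega$, and since $f,g\in H^2$ the product $v$ lies in $H^1$; hence $v\in H^1_0$. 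Because $w=Pw$ we have $(I-P)w=0$, so $\grad Q(w)=0$ and thus $Q(w)=0$; consequently the last term of~\eqref{d:altip} drops out and
\[
\ip{w}{h}_\epsilon=\ip{Pv}{h}+\epsilon\ip{\grad Pv}{\grad h}.
\]
Note that the $C_\epsilon$-term vanishes entirely, so the resulting constant is independent of $\epsilon$, consistent with the claim $C=C(\Omega)$.

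For the first term, self-adjointness of $P$ on $L^2$ gives $\ip{Pv}{h}=\ip{v}{Ph}$, whence $\abs{\ip{Pv}{h}}\le\norm{v}_{L^2}\norm{h}_{L^2}\le C\norm{v}_{L^2}\norm{\lap h}_{L^2}$ by the Poincar\'e inequality. The second term is the crux. Here I would use the Helmholtz decomposition $Pv=v-\grad Q(v)$ together with $\lap Q(v)=\divergence v$ to write, interpreting $-\lap Pv=-\lap v+\grad\divergence v$ as an element of $H^{-1}$ paired against $h\in H^1_0$,
\[
\ip{\grad Pv}{\grad h}=\ip{\grad v}{\grad h}-\ip{\divergence v}{\divergence h}.
\]
The point of this identity is that it reintroduces $v$ (rather than $Pv$) into both terms, so that I can now use $v\in H^1_0$ to integrate by parts and push both derivatives onto $h$: since $v$ vanishes on $\del\Omega$, $\ip{\grad v}{\grad h}=-\ip{v}{\lap h}$ and $\ip{\divergence v}{\divergence h}=-\ip{v}{\grad\divergence h}$, so that
\[
\ip{\grad Pv}{\grad h}=\ip{v}{\grad\divergence h-\lap h}.
\]
This is bounded by $\norm{v}_{L^2}\norm{h}_{H^2}$, and elliptic regularity for the Dirichlet Laplacian on the $C^3$ domain $\Omega$ gives $\norm{h}_{H^2}\le C\norm{\lap h}_{L^2}$ for $h\in H^2\cap H^1_0$.

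It then remains to prove $\norm{v}_{L^2}=\norm{(f\cdot\grad)g}_{L^2}\le C\norm{f}_{H^1}\norm{\grad g}_{H^{1/2}}$, which I would obtain from H\"older's inequality $\norm{(f\cdot\grad)g}_{L^2}\le\norm{f}_{L^p}\norm{\grad g}_{L^q}$ with $\frac{1}{p}+\frac{1}{q}=\frac{1}{2}$, choosing $(p,q)=(6,3)$ when $d=3$ and $(p,q)=(4,4)$ when $d=2$, and invoking the Sobolev embeddings $H^1\hookrightarrow L^p$ and $H^{1/2}\hookrightarrow L^q$ (valid in these dimensions on bounded $C^3$ domains). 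Combining the three estimates yields the claim; note that the full $H^2$ regularity of $f,g$ is only used qualitatively, to justify the integrations by parts, while the quantitative bound charges only $\norm{f}_{H^1}$ and $\norm{\grad g}_{H^{1/2}}$. The main obstacle is precisely the second term: a naive integration by parts in $\ip{\grad Pv}{\grad h}$ leaves a boundary integral $\int_{\del\Omega}(Pv)\cdot(\nu\cdot\grad)h$ that does \emph{not} vanish, and controlling it directly through trace estimates would cost a full $\norm{v}_{H^1}$, i.e.\ two derivatives on $g$, far more than $\grad g\in H^{1/2}$ allows. The Helmholtz identity above, combined with the fact that $v$ itself vanishes on $\del\Omega$ because $f\in H^1_0$, is exactly what circumvents this loss.
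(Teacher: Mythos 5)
Your proof is correct and follows essentially the same route as the paper's: you drop the $Q$-term because $P((f\cdot\grad)g)$ lies in the range of $P$, move the operator $\lap P = \lap - \grad\divergence$ (identity~\eqref{eqnGradDivId}) onto $h$ through integrations by parts whose boundary terms vanish because $(f\cdot\grad)g$ and $h$ both vanish on $\del\Omega$, and finish with the $H^1\times H^{1/2}\times L^2$ product estimate together with elliptic regularity for the Dirichlet problem. The only differences are presentational: the paper invokes the trilinear estimate~\eqref{eqnIntFGH} with exponents $(1,\tfrac12,0)$ where you derive the equivalent product bound directly from H\"older and Sobolev embeddings, and your $H^{-1}$--$H^1_0$ pairing is simply a more explicit rendering of the paper's two-step integration by parts.
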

\begin{proof}
Observe first that
$$
\ip{P((f\cdot\grad) g)}{h}_\epsilon = \ip{P((f\cdot\grad) g)}{h} + \epsilon \ip{\grad P((f\cdot\grad) g)}{\grad h},
$$
since $(I - P) P((f\cdot\grad) g) = 0$. Thus to prove the lemma, it suffices to show the estimates
\begin{gather}
\label{abcnonL2}
\abs{\ip{P((f\cdot\grad) g)}{h}} \leq C \norm{f}_{H^1}\norm{\grad g}_{H^{\frac{1}{2}} }\norm{h}_{L^2}\\
\llap{and\qquad}
\label{abcnonH1}
    \abs{\ip{\grad P((f\cdot\grad)g)}{\grad h}}\leq C \norm{f}_{H^1}\norm{\grad g}_{H^{\frac{1}{2}} }\norm{\lap h}_{L^2}.
\end{gather}
for some constant $C = C(\Omega)$.

The inequality~\eqref{abcnonL2} follows directly from the Sobolev embedding theorem. Indeed, for any three functions $f_1, f_2, f_3$, we know%
\footnote{For non-integer values of $s$, we define the fractional Sobolev norms by interpolation. See for instance~\cite{bblConstFoias}*{Page 50}.}
\begin{equation}\label{eqnIntFGH}
\abs*{\int_\Omega f_1 f_2 f_3} \leq C \norm{f_1}_{H^{s_1}}\norm{f_2}_{H^{s_2}}\norm{f_3}_{H^{s_3}}
\end{equation}
provided $0\leq s_i \leq 3$, $s_1 + s_2 + s_3 \geq \frac{d}{2}$ and at least two of $s_1, \dots, s_3$ are non-zero (see for instance the proof of Proposition 6.1\ in~\cite{bblConstFoias}). Choosing $s_1=1$, $s_2 = 1/2$ and $s_3 = 0$, we have
$$
\abs{\ip{P((f\cdot\grad) g)}{h}}
    =\abs{\ip{(f\cdot\grad)g}{P h}}
    \leq C \norm{f}_{H^1}\norm{\grad g}_{H^{\frac{1}{2}} }\norm{h}_{L^2},
$$
proving~\eqref{abcnonL2}.

For~\eqref{abcnonH1}, we first integrate by parts and observe $\lap P$ is a regular differential operator (identity~\eqref{eqnGradDivId}). Now we can integrate by parts again to obtain the desired estimate. Explicitly,
\begin{multline*}
\ip{\grad P((f \cdot \grad)g)}{\grad h}
 = -\ip{\lap P((f \cdot \grad)g)}{h} 
\\ 
= -\ip{\left(\lap - \grad \divergence \right) ((f \cdot \grad)g)}{h}
= -\ip{(f \cdot \grad)g}{ \left(\lap - \grad \divergence\right) h} ,
\end{multline*}
where all boundary integrals vanish because $f, g, h \in H^1_0$.
Now using~\eqref{eqnIntFGH} with~$s_1 = 1$, $s_2 = 1/2$, $s_3 = 0$, and elliptic regularity we have
$$
\abs*{\ip{\grad P((f \cdot \grad)g)}{\grad h}}
    = \abs*{\ip{(f \cdot \grad)g}{ \left(\lap - \grad \divergence\right) h}}
    \leq C \norm{f}_{H^1 } \norm{\grad g}_{H^{\frac{1}{2}} } \norm{\lap h}_{L^2}.
$$
This concludes the proof.
\end{proof}

We now return to the proof of Theorem~\ref{thmSmallDataGexist}.
\begin{proof}
We assume there exists a smooth solution  $u$ of~\eqsysENS on the time interval $[0, T]$ for some $T > 0$. We will find appropriate {\it a priori} estimates for the norm of $u$ (see relation~\eqref{eqnUspaces}, below) in terms of the initial data and $T$. Now a standard approximating scheme (e.g. the one constructed in~\cite{bblLiuLiuPego}) will prove global existence of solutions.

Fix $\epsilon > 0$ to be small enough so that Proposition~\ref{ppnAdjustedIP} holds, and $\ip{\cdot}{\cdot}_\epsilon$ denote the $H^1$ equivalent inner product from Proposition~\ref{ppnAdjustedIP}. Then
\begin{equation}\label{eqnH1EpsNormEvol}
\frac{1}{2} \del_t \norm{u}_{H^1_\epsilon}^2 + \ip{P ((u \cdot \grad) u)}{u}_\epsilon + \ip{u}{Au}_\epsilon = 0.
\end{equation}
where
$$
\norm{v}_{H^1_\epsilon} \defeq \sqrt{\ip{v}{v}_\epsilon}.
$$

By Lemma~\ref{lmaNonGenEst}, for any $c_0 > 0$, we can find a constant $C = C(\epsilon, c_0, \Omega) > 0$ such that
\begin{multline}\label{eqnNLStupidBound}
\abs*{\ip{P((u \cdot \grad) u)}{u}_\epsilon}
    \leq \norm{u}_{H^{1}} \norm{\grad u}_{H^{1/2}} \norm{\lap u}_{L^2}\\
    \leq C \norm{\grad u}_{L^2}^{3/2} \norm{\lap u}_{L^2}^{3/2}
    \leq C \norm{\grad u}^6 + \frac{\epsilon}{8 c_0} \norm{\lap u}^2.
\end{multline}
We will subsequently fix $c_0$ to be the constant $c$ that appears on the right of~\eqref{eqnCoercivicity}.

Using Proposition~\ref{ppnAdjustedIP} and equations~\eqref{eqnH1EpsNormEvol}, \eqref{eqnNLStupidBound} we obtain
$$
\del_t \norm{u}_{H^1_\epsilon}^2 + \frac{2}{c_0} \left( \norm{\grad u}_{L^2}^2 + \frac{\epsilon}{2} \norm{\lap u}_{L^2}^2 + C_\epsilon \norm{\grad q(u)}_{L^2}^2 \right) \leq C \norm{\grad u}_{L^2}^6,
$$
where $C_\epsilon$ is the constant in~\eqref{eqnCoercivicity}. Allowing the constant $C = C(\epsilon, c_0, \Omega)$ to change from line to line, and using the Poincar\'e inequality, we obtain
 $$
\del_t \norm{u}_{H^1_\epsilon}^2 + \frac{1}{c_0} \left( \norm{\grad u}_{L^2}^2 + \frac{\epsilon}{2} \norm{\lap u}_{L^2}^2
+ C_\epsilon \norm{\grad q(u)}_{L^2}^2 \right) \leq C \norm{u}_{H^1_\epsilon}^6 - \frac{1}{c_1} \norm{u}_{H^1_\epsilon}^2.
$$
for some constant $c_1 = c_1(\epsilon, \Omega)$. Thus if at time $t = 0$ we have
$$
\norm{u_0}_{H^1_\epsilon} \leq \frac{1}{( C c_1 )^{1/4}},
$$
then for all $t > 0$,
$$
\norm{u(t)}_{H^1_\epsilon}^2 + \frac{1}{c_0} \int_0^t \left( \norm{\grad u}_{L^2}^2 + \frac{\epsilon}{2} \norm{\lap u}_{L^2}^2 + C_\epsilon \norm{\grad q(u)}_{L^2}^2 \right) \, ds \leq \norm{u_0}_{H^1_\epsilon}^2.
$$
Now using the local existence result in~\cite{bblLiuLiuPego}, and the fact that $\norm{\cdot}_{H^1_\epsilon}$ is equivalent to the usual $H^1$ norm, 
we conclude the proof of Theorem~\ref{thmSmallDataGexist}.
\end{proof}

\section{Two dimensional Small divergence global existence for the extended Navier-Stokes equations}\label{sxn2DSmallDivGlobalExistence}

The aim of this section is to prove Theorem~\ref{thmSmallDivGexist}. We recall first the $H^1_0$-orthogonal projection onto divergence free vector fields. For $u_0 \in H^1_0(\Omega)$, we define $v_0, w_0 \in H^1_0(\Omega)$ to be solutions of the PDE's
\begin{equation}\label{eqnV0andW0}
\begin{beqn}
- \lap v_0 + \grad \phi = -\lap u_0 & in $\Omega$,\\
\divergence v_0 = 0 & in $\Omega$,\\
v = 0 & on $\del \Omega$,
\end{beqn}
\quad\text{and}\quad
\begin{beqn}
- \lap w_0 + \grad \psi = 0 & in $\Omega$,\\
\divergence w_0 = \divergence u_0 & in $\Omega$,\\
w_0 = 0 & on $\del \Omega$.
\end{beqn}
\end{equation}
respectively. Note that $u_0 \in H^1_0(\Omega)$ guarantees the required compatibility condition $\int_\Omega \divergence u_0 = 0$, and so the existence of $v_0, w_0$ satisfying~\eqref{eqnV0andW0} is well known (see for instance~\cite{bblTemam}*{\S2}). Clearly $u_0 = v_0 + w_0$, and orthogonality of $v_0$ and $w_0$ in $H^1_0$ follows from the identity
$$
\ip{v_0}{w_0}_{H^1_0(\Omega)} \defeq \ip{\grad v_0}{\grad w_0} = \ip{v_0}{-\lap w_0} = \ip{v_0}{-\grad \psi} = 0.
$$

Let $v$ be the solution to equation~\eqsysENS with initial data $v_0$. As shown earlier, $\divergence v_0 = 0$ implies that $\divergence v = 0$ for all time, and consequently $v$ is a solution of the $2D$ Navier-Stokes  with initial data $v_0$. Let $w = u - v$, and observe
\begin{equation}\label{eqnW}
\left\{
\begin{aligned}
\begin{multlined}[b]
\del_t w + P((w \cdot \grad) w) \mathop+\\
\mathop+ P((v\cdot \grad) w ) + P((w \cdot \grad) v)
\end{multlined}
&= \lap w -\grad p_s(w) &&\text{in } \Omega, \\
w &= 0 &&\text{on } \partial\Omega.\\
\end{aligned}\right.
\end{equation}

The strategy to prove Theorem~\ref{thmSmallDivGexist} is as follows. First standard existence theory for the  $2D$ Navier-Stokes equations implies that for any initial data $v_0\in H^1_0$, with $\divergence v_0=0$, we have global existence of a strong solution $v$. Further, after a long time $T_0$, the solution $v$ becomes small. Now making $u_0 - v_0$ is sufficiently small, we can guarantee that $w$, a solution to~\eqref{eqnW} with initial data $u_0 - v_0$, both exists on the time interval $[0, T_0]$, and is small at time $T_0$. Thus $u = v+ w$ is a solution to~\eqsysENS defined, which is small at time $T_0$. Now a small data global existence result (Theorem~\ref{thmSmallDataGexist}) will allow us to continue this solution for all time.

We begin with a Lemma concerning the existence and smallness of solutions to~\eqref{eqnW}.
\begin{lemma}\label{lmaEqnWsmalltime}
Let $\Omega \subset \R^2$ be a bounded $C^3$ domain, and $v_0 \in
H^1_0(\Omega)$ with $\divergence v_0 = 0$. Let $u_0\in H^1_0(\Omega)$ be
such that $P_0u_0=v_0$. Then, for any $T_0,\delta_0>0$ there exists a (small) constant $W_0=W_0(\Omega,\|v_0\|_{H^1},T_0,\delta_0)$ such that  if
$$
\norm{w_0}_{H^1_\epsilon}\leq W_0
$$
then there exists a solution of \eqref{eqnW} on the interval $[0,T_0]$ and
$$
\norm{w(T_0)}_{H^1}\leq \delta_0.
$$
\end{lemma}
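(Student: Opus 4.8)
The plan is to run an $H^1_\epsilon$ energy estimate for \eqref{eqnW} and close it by a continuity (bootstrap) argument, exploiting that the coercivity in Proposition~\ref{ppnAdjustedIP} controls $\epsilon\norm{\lap w}_{L^2}^2$ and that the driving field $v$ is a globally regular $2$D solution. As in the proof of Theorem~\ref{thmSmallDataGexist}, I would derive an a priori bound on a (smooth) strong solution and then appeal to the local theory of~\cite{bblLiuLiuPego}. First I would rewrite \eqref{eqnW} using $\lap w - \grad p_s(w) = -Aw$ as
\[
\del_t w + Aw + P((w\cdot\grad)w) + P((v\cdot\grad)w) + P((w\cdot\grad)v) = 0,
\]
and test it in $\ip{\cdot}{\cdot}_\epsilon$ against $w$. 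Writing $y(t)\defeq\norm{w}_{H^1_\epsilon}^2$ and applying the coercivity estimate \eqref{eqnCoercivicity} to the linear term, this gives
\[
\frac12\del_t y + \frac1c\bigl(\norm{\grad w}_{L^2}^2 + \epsilon\norm{\lap w}_{L^2}^2\bigr) \leq \abs{N_1}+\abs{N_2}+\abs{N_3},
\]
where $N_1,N_2,N_3$ are the $\ip{\cdot}{\cdot}_\epsilon$ pairings of $w$ with $P((w\cdot\grad)w)$, $P((v\cdot\grad)w)$, $P((w\cdot\grad)v)$. Since $v_0$ is divergence free, $v$ solves the standard $2$D Navier--Stokes equations, so classical theory gives a global strong solution with $\sup_{[0,T_0]}\norm{v}_{H^1}$ and $\int_0^{T_0}\norm{v}_{H^2}^2\,dt$ finite and depending only on $\Omega$, $\norm{v_0}_{H^1}$ and $T_0$.

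Next I would estimate each $N_i$ with Lemma~\ref{lmaNonGenEst} together with the interpolation/elliptic-regularity bound $\norm{\grad h}_{H^{1/2}}\leq C\norm{\grad h}_{L^2}^{1/2}\norm{\lap h}_{L^2}^{1/2}$ on $H^2\cap H^1_0$ and Poincar\'e. For $N_1$ this yields $\abs{N_1}\leq C\norm{\grad w}_{L^2}^{3/2}\norm{\lap w}_{L^2}^{3/2}$, exactly as in Theorem~\ref{thmSmallDataGexist}, which Young's inequality splits into a small multiple of $\norm{\lap w}_{L^2}^2$ plus $C\norm{\grad w}_{L^2}^6$. For the mixed terms, $\abs{N_2}\leq C\norm{v}_{H^1}\norm{\grad w}_{L^2}^{1/2}\norm{\lap w}_{L^2}^{3/2}$ and $\abs{N_3}\leq C\norm{\grad v}_{H^{1/2}}\norm{\grad w}_{L^2}\norm{\lap w}_{L^2}$; after Young each again contributes a small multiple of $\norm{\lap w}_{L^2}^2$ (so that the three contributions together are absorbed into the coercive $\frac{\epsilon}{c}\norm{\lap w}_{L^2}^2$) plus $C\bigl(\norm{v}_{H^1}^4+\norm{\grad v}_{H^{1/2}}^2\bigr)\norm{\grad w}_{L^2}^2$. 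Using that $\norm{\grad w}_{L^2}^2$ is comparable to $y$ and dropping the remaining nonnegative coercive terms, I arrive at the scalar differential inequality
\[
\del_t y \leq C\,y^3 + C\,g(t)\,y, \qquad g(t)\defeq \norm{v(t)}_{H^1}^4 + \norm{\grad v(t)}_{H^{1/2}}^2,
\]
where crucially $G\defeq\int_0^{T_0}g\,dt<\infty$ with a bound depending only on $\Omega,\norm{v_0}_{H^1},T_0$, since $\norm{\grad v}_{H^{1/2}}^2\leq C\norm{v}_{H^2}^2$ is time-integrable.

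Finally I would close the estimate by a bootstrap. As long as $y\leq1$ one has $y^3\leq y$, so $\del_t y\leq C(g+1)y$ and Gronwall gives $y(t)\leq K\,y(0)$ with $K\defeq e^{C(G+T_0)}$ depending only on $\Omega,\norm{v_0}_{H^1},T_0$. Choosing $W_0$ so small that $KW_0^2<1$ keeps $y$ strictly below $1$, so the continuity argument shows the bound persists on all of $[0,T_0]$; combined with the local well-posedness of \eqref{eqnW}, this a priori $H^1$ bound rules out blow-up and yields a solution on $[0,T_0]$. Shrinking $W_0$ further so that $c\,K\,W_0^2\leq\delta_0^2$, where $c$ is the constant in $\norm{\cdot}_{H^1}\leq c\norm{\cdot}_{H^1_\epsilon}$, gives $\norm{w(T_0)}_{H^1}\leq\delta_0$; the resulting $W_0=W_0(\Omega,\norm{v_0}_{H^1},T_0,\delta_0)$ is as claimed. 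The main obstacle is the mixed term $N_3$, in which a derivative falls on $v$: it cannot be absorbed by a uniform fraction of the coercive $\norm{\lap w}_{L^2}^2$, and instead must be handled as a term linear in $y$ with the time-dependent coefficient $\norm{\grad v}_{H^{1/2}}^2$. That this coefficient is integrable in time is precisely the $2$D strong-solution regularity of $v$, and the whole scheme works only because \eqref{eqnCoercivicity} supplies the $\epsilon\norm{\lap w}_{L^2}^2$ needed to absorb the nonlinear contributions.
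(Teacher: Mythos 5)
Your proposal is correct and takes essentially the same route as the paper's proof: testing \eqref{eqnW} against $w$ in the inner product $\ip{\cdot}{\cdot}_\epsilon$, invoking the coercivity of Proposition~\ref{ppnAdjustedIP} for the linear term, bounding the self-interaction and the two mixed terms via Lemma~\ref{lmaNonGenEst} with Young's inequality (absorbing the $\norm{\lap w}_{L^2}^2$ pieces and leaving the coefficient $\norm{v}_{H^1}^4+\norm{\grad v}_{H^{1/2}}^2$ on the quadratic term), and using the global regularity of the 2D solution $v$ to integrate that coefficient in time. The only difference is one of completeness, in your favor: you make explicit the final continuity/bootstrap argument and the $L^2_t H^2_x$ justification for the time-integrability of $\norm{\grad v}_{H^{1/2}}^2$, both of which the paper leaves implicit.
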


Momentarily postponing the proof of the lemma, we prove Theorem~\ref{thmSmallDivGexist}.
\begin{proof}[Proof of Theorem~\ref{thmSmallDivGexist}]
We let $V_0$ be as in Theorem~\ref{thmSmallDataGexist}, and let $v$ be
the solution to the 2D Navier-Stokes equations with initial data $v_0 =
P_0 u_0 \in H^1_0$. It is well known (see for
instance~\cites{bblConstFoias,bblTemam}) that there exists $T_0$ large
enough, so that $\norm{v(T_0)}_{H^1_0}\leq \frac12 {V_0}$.
Indeed, from the standard $L^2$ energy identity we can choose $T_0$ to
satisfy $T_0(\frac12 V_0)^2\le \|v_0\|_{L^2}^2$.

By Lemma~\ref{lmaEqnWsmalltime} there exists $W_0 > 0$ small enough so that if initially
\begin{equation}\label{eqnW0Small}
\norm{w_0}_{H^1_\epsilon}\leq W_0
\end{equation}
then the solution $w$ to~\eqref{eqnW} exists up to time $T_0$, and
further $\norm{w(T_0)}_{H^1}\leq \frac12V_0$. From~\eqref{eqnV0andW0}, we know $\norm{w_0}_{H^1} \leq c \norm{\divergence u_0}_{L^2}$ (see~\cite{bblTemam}*{\S2}). Since the norms $\norm{\cdot}_{H^1_\epsilon}$ and $\norm{\cdot}_{H^1}$ are equivalent, making $U_0$ small enough will guarantee~\eqref{eqnW0Small}, thus allowing to apply Lemma~\ref{lmaEqnWsmalltime} and obtain the existence of $w$ on the interval $[0,T_0]$.
 Then we obtain that ~\eqsysENS has a solution $u=w+v$ on $[0,T_0]$  and moreover $\norm{u(T_0)}_{H^1}\leq \norm{w(T_0)}_{H^1}+\norm{v(T_0)}_{H^1}\leq V_0$. Applying Theorem~\ref{thmSmallDataGexist} we can continue the solution $u$ on the interval $[T_0,\infty)$. 
\end{proof}

It remains to prove the Lemma.
\begin{proof}[Proof of Lemma~\ref{lmaEqnWsmalltime}]
As with the proof of Theorem~\ref{thmSmallDataGexist}, it suffices to obtain an {\it a priori} estimate for~$\norm{w}_{H^1}$. Fix $\epsilon > 0$ to be small enough so that Proposition~\ref{ppnAdjustedIP} holds. Then
\begin{multline}\label{eqnGradWextended}
\frac{1}{2}\del_t \norm{w}_{H^1_\epsilon}^2 +\overbrace{\ip{P((w\cdot\grad) w)}{w}_\epsilon+\ip{w}{Aw}_\epsilon}^{\mathcal{J}_1}=\\
 -\underbrace{\ip{ P ((v \cdot \grad) w)}{w}_\epsilon}_{\mathcal J_2} -
\underbrace{\ip{P((w \cdot \grad) v)}{w}_\epsilon}_{\mathcal J_3}
\end{multline}
where $\ip{\cdot}{\cdot}_\epsilon$ denotes the inner product defined in Proposition~\ref{ppnAdjustedIP}, and $\norm{\cdot}_{H^1_\epsilon}$ the induced norm.

We estimate each term individually. The term $\mathcal J_1$ is identical to the term that appears in the proof of Theorem~\ref{thmSmallDataGexist}, and thus
$$
\mathcal J_1 \geq \frac{1}{c_0} \left( \norm{\grad w}_{L^2}^2 + \frac{\epsilon}{2} \norm{\lap w}_{L^2}^2 + C_\epsilon \norm{\grad q}_{L^2}^2 \right) - \left( c_2 \norm{w}_{H^1_\epsilon}^6 - \frac{1}{c_1} \norm{w}_{H^1_\epsilon}^2 \right),
$$
where $q$ is the unique, mean-zero function such that $\grad q = (I - P) w$. As before, $c_0$ is the constant that appears in~\eqref{eqnCoercivicity}, and $c_1 = c_1(\epsilon, \Omega), c_2 = c_2(\epsilon, \Omega)$ are positive constants.

Using $C = C(\epsilon, \Omega) > 0$ to denote an intermediate constant that can change from line to line, Lemma~\ref{lmaNonGenEst} bounds $\mathcal J_2$ and $\mathcal J_3$ by
\begin{align*}
\abs{\mathcal{J}_2} + \abs{\mathcal J_3}
    &\leq C \left(\norm{v}_{H^{1}} \norm{\grad w}_{H^{1/2}} \norm{\lap w}_{L^2} + \norm{w}_{H^{1}} \norm{\grad v}_{H^{1/2}} \norm{\lap w}_{L^2} \right)\\
    &\leq \frac{\epsilon}{4c_0}\norm{\lap w}_{L^2}^2 + c_3 \left( \norm{v}_{H^1}^4 + \norm{\grad v}_{H^{1/2} }^2 \right)\norm{w}_{H^1_\epsilon}^2
\end{align*}
for some constant $c_3 = c_3(\epsilon, \Omega)$.

Combining our estimates,
\begin{multline}\label{eqnGraeqnExtendedDynamicsdW1}
\frac{1}{2}\del_t  \norm{w}_{H^1_\epsilon}^2 +\frac{1}{c_0} \left( \norm{\grad w}_{L^2}^2 + \frac{\epsilon}{4} \norm{\lap w}_{L^2}^2 + C_\epsilon \norm{\grad q}_{L^2}^2 \right)\\
    \leq c_2 \norm{w}_{H^1_\epsilon}^6 - \left(\frac{1}{c_1} - c_3 \left( \norm{v}_{H^1}^4 + \norm{\grad v}_{H^{1/2} }^2 \right) \right) \norm{w}_{H^1_\epsilon}^2.
\end{multline}

Since $v$ is a strong solution to the 2D incompressible Navier-Stokes equations with initial data $v_0\in H^1_0$,  we have (see for instance \cite{bblConstFoias}*{p.\ 78}) that 
$$
\sup_{t \geq 0} \norm{v(t)}_{H^1}^2 + \int_0^\infty \norm{v(s)}_{H^{1}}^2 \, ds < C
$$
for some constant $C$ depending only on $\Omega$ and $\norm{v_0}_{H^{1}}$. Using this in~\eqref{eqnGraeqnExtendedDynamicsdW1} will prove local well-posedness of~\eqref{eqnW}. Further, for any $T_0,\delta_0>0$, equation~\eqref{eqnGraeqnExtendedDynamicsdW1} will also show that the solution to~\eqref{eqnW} exists up to time $T_0$, and $\norm{w(T_0)}_{H^{1}} < \delta_0$, provided $\norm{w_0}_{H^{1}}$ is small enough.
\end{proof}

\section{Divergence damped equations}\label{sxnDivergenceDamping}
The aim of this section is to prove coercivity of $B_\alpha$ (defined in~\eqref{eqnBAlphaDef}), with constants independent of $\alpha$, and 2D global existence with strong enough divergence damping (Corollary~\ref{clyDivergenceDampedExistence}).

\begin{proposition}\label{ppnBCoercive}
For any $\alpha \geq 0$, and $u \in D(B_\alpha)$ we have
\begin{equation}\label{eqnBCoercive1}
\ip{u}{B_\alpha u}_\epsilon = \ip{u}{Au}_\epsilon + \alpha \left( \norm{\grad Q(u)}_{L^2}^2 + C_\epsilon \norm{Q(u)}_{L^2}^2 + \epsilon \norm{\lap Q(u)}_{L^2}^2 \right).
\end{equation}
\end{proposition}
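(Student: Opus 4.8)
The plan is to exploit linearity of the inner product together with the definition $B_\alpha = A + \alpha(I-P)$ to reduce the identity to a single computation. Since $\ip{u}{B_\alpha u}_\epsilon = \ip{u}{Au}_\epsilon + \alpha\ip{u}{(I-P)u}_\epsilon$, it suffices to prove
$$
\ip{u}{(I-P)u}_\epsilon = \norm{\grad Q(u)}_{L^2}^2 + C_\epsilon \norm{Q(u)}_{L^2}^2 + \epsilon\norm{\lap Q(u)}_{L^2}^2 .
$$
Writing $q = Q(u)$ and $w = (I-P)u = \grad q$, I would expand the left side using the definition~\eqref{d:altip} into the three pieces $\ip{u}{w}$, $\epsilon\ip{\grad u}{\grad w}$, and $C_\epsilon\ip{Q(u)}{Q(w)}$, and evaluate each separately.

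The first and third pieces are immediate from the structure of $I-P$. Since $I-P$ is the $L^2$-orthogonal projection onto gradients, it is self-adjoint and idempotent, so $\ip{u}{(I-P)u} = \norm{(I-P)u}_{L^2}^2 = \norm{\grad q}_{L^2}^2 = \norm{\grad Q(u)}_{L^2}^2$. For the third piece, idempotence gives $(I-P)w = (I-P)u = \grad q$; as both $Q(w)$ and $q$ are mean-zero with this same gradient, $Q(w) = Q(u)$, whence $\ip{Q(u)}{Q(w)} = \norm{Q(u)}_{L^2}^2$.

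The only genuine work is the middle piece, where I must show $\ip{\grad u}{\grad w} = \norm{\lap q}_{L^2}^2$ with $w = \grad q$. I would integrate by parts to move one derivative off $q$: using $\partial_i\partial_j q = \partial_j(\partial_i q)$ and $\sum_j \partial_j\partial_i u_j = \partial_i \divergence u = \partial_i \lap q$ (recall $\divergence u = \divergence\grad q = \lap q$), the bulk term becomes $-\ip{\grad(\lap q)}{\grad q}$, which a further integration by parts turns into $\norm{\lap q}_{L^2}^2$ after invoking the Neumann condition $\frac{\del}{\del\nu}q = 0$ on $\del\Omega$. This Neumann condition itself follows because $u\cdot\nu = 0$ (as $u\in H^1_0$) while $Pu\cdot\nu = 0$ by tangentiality of the Leray projection, so $\grad q\cdot\nu = (I-P)u\cdot\nu = 0$.

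The step demanding the most care is verifying that the boundary term produced by that first integration by parts vanishes. Here I would use that $u = 0$ on $\del\Omega$ forces each $\grad u_j$ to be purely normal there, i.e. $\partial_i u_j = \nu_i\,\frac{\del}{\del\nu}u_j$ on $\del\Omega$; the boundary integrand then factors as $\bigl(\tfrac{\del u}{\del\nu}\cdot\nu\bigr)\bigl(\tfrac{\del q}{\del\nu}\bigr)$, which vanishes once more by $\frac{\del}{\del\nu}q = 0$. Collecting the three evaluated pieces yields the claimed identity. I note the integrations by parts are justified because $u\in H^2\cap H^1_0$ gives $\divergence u\in H^1$, so $q\in H^2$ by elliptic regularity for the Neumann problem $\lap q = \divergence u$.
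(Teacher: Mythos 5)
Your proof is correct and takes essentially the same route as the paper: split off $\alpha \ip{u}{(I-P)u}_\epsilon$ by linearity, identify the $L^2$ and $Q$-pieces exactly as you do, and reduce the $\epsilon$-piece to $\norm{\lap Q(u)}_{L^2}^2 = \norm{\divergence u}_{L^2}^2$. The only difference is mechanical: for the middle term the paper integrates by parts the other way, writing $\ip{\grad u}{\grad (I-P)u} = -\ip{u}{\lap (I-P)u} = -\ip{u}{\grad \divergence u} = \norm{\divergence u}_{L^2}^2$, so that every boundary term carries a factor of $u$ or $u\cdot\nu$ and vanishes immediately from $u=0$ on $\del\Omega$, which sidesteps your (correct but unnecessary) boundary analysis via the purely-normal structure of $\grad u_j$ on $\del\Omega$ and the Neumann condition $\frac{\del q}{\del\nu}=0$.
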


\begin{proof}
By linearity,
\begin{equation}\label{eqnIPUBalphaEps}
\ip{u}{B_\alpha u}_\epsilon
    = \ip{u}{A u}_\epsilon + \alpha \ip{u}{(I - P) u}_\epsilon.
\end{equation}
For the second term on the right,
$$
\ip{u}{(I-P)u}_\epsilon
    = \ip{u}{(I-P)u} + C_\epsilon \ip{Q(u)}{Q((I-P)u)} + \epsilon \ip{\grad u}{\grad (I - P) u}.
$$
The first two terms on the right are equal to $\norm{(I-P)u}_{L^2}^2$ and $C_\epsilon \norm{Q(u)}_{L^2}^2$ respectively. For the last term,
\begin{align*}
\ip{\grad u}{\grad (I - P) u}
    &= - \ip{u}{\lap (I- P) u} + \int_{\del \Omega} u_i \frac{\del}{\del \nu} \left( (I - P) u \right)_i\\
    &= - \ip{u}{\grad \divergence u} + 0\\
    & = \norm{\divergence u}_{L^2}^2 - \int_{\del \Omega} (\divergence u) u \cdot \nu = \norm{\divergence u}_{L^2}^2
\end{align*}
Consequently,
\begin{equation}\label{eqnIPuImPu}
\ip{u}{(I- P)u}_\epsilon = \norm{(I - P) u}_{L^2}^2 + C_\epsilon \norm{Q(u)}_{L^2}^2 + \epsilon \norm{\divergence u}_{L^2}^2,
\end{equation}
and using~\eqref{eqnIPUBalphaEps}, we obtain~\eqref{eqnBCoercive1}.
\end{proof}

Before moving to the proof of Corollary~\ref{clyDivergenceDampedExistence}, we digress briefly to remark that we can also consider higher order divergence damped operators of the form
$$
B_\alpha' \defeq A  - \alpha \grad \divergence.
$$
The results we obtain for~\eqref{eqnNS}--\eqref{eqnPdef} with a zeroth order damping term will also apply when we add the second order damping term above. However, while the operator $B_\alpha'$ has a stronger (second order) damping term, it is not as easy to deal with numerically. The zeroth order damping terms in $B_\alpha$, on the other hand, can easily be implemented numerically, and has a strong enough damping effect to give a better existence result (Corollary~\ref{clyDivergenceDampedExistence}). We now return to prove Corollary~\ref{clyDivergenceDampedExistence}.

\begin{proof}[Proof of Corollary~\ref{clyDivergenceDampedExistence}]
Since Theorems~\ref{thmSmallDataGexist} and~\ref{thmSmallDivGexist} work verbatim for~\eqref{eqnExtendedDynamicsAlpha}, there exists a time $T_0 = T_0( \norm{u_0}_{H^{1}}, \Omega)$, independent of $\alpha$, such that  there exists a solution $u$ of \eqref{eqnExtendedDynamicsAlpha}  on the interval  $[0,T_0]$, with $\norm{u(T_0)}_{H^1}$ bounded, independent of $\alpha$. Let $U_0$ be the constant from Theorem~\ref{thmSmallDivGexist}. Observe that~\eqref{eqnExtendedDynamicsAlpha} implies that $\divergence u$ satisfies
\begin{equation*}
\left\{
\begin{aligned}
\del_t \divergence u + \alpha \divergence u &= \lap \divergence u &&\text{in }\Omega,\\
\frac{\del}{\del \nu} \divergence u &= 0 &&\text{for }x \in \del \Omega, t > 0,
\end{aligned}\right.
\end{equation*}
Consequently,
$$
\norm{\divergence u(t)}_{L^2}^2 \leq e^{-(\lambda_1 + \alpha) t} \norm{\divergence u_0}_{L^2}^2,
$$
where $\lambda_1 > 0$ is the smallest non-zero eigenvalue of the Laplacian with Neumann boundary conditions. Thus there exists $\alpha_0 > 0$, such that
$$
\norm{\divergence u(T_0)}_{L^2} < U_0,
$$
for all $\alpha > \alpha_0$. Now, by Theorem~\ref{thmSmallDivGexist} the solution to~\eqref{eqnExtendedDynamicsAlpha} also exists and is regular on the time interval $[T_0, \infty)$.
\end{proof}

\section{Existence results under coercive boundary conditions.}\label{sxnBC}
The aim of this section is to show that the extended Stokes operator is coercive under the boundary conditions~\eqref{eqnNlBC}, and prove Proposition~\ref{ppnNLBCGexist}. We begin with coercivity.

\begin{proposition}\label{ppnCoercivity}
If either $u$ and $v$ are in $H^1(\T^d)$ and periodic, or if $u$ and $v$ are in $H^2(\Omega)$ and satisfy the boundary conditions~\eqref{eqnNlBC}, then
\begin{equation}\label{eqnAselfAdj}
\ip{Au}{v} = \ip{u}{Av}
\quad\text{and}\quad
\ip{u}{Au} = \int_\Omega \abs{\grad u}^2.
\end{equation}
\end{proposition}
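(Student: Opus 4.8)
The plan is to handle both cases from the single structural identity
\[
Au = -P\lap(Pu) - \grad\divergence u,
\]
which follows from~\eqref{eqnAdef}, the identity~\eqref{eqnGradDivId}, and the fact that the Leray projection annihilates $L^2$ gradients (so that $P\lap(I-P)u = P\grad\divergence u = 0$). In the periodic case there is no boundary, $P$ and $\lap$ commute, $\grad p_s \equiv 0$ and $A = -\lap$; both assertions then reduce to a single integration by parts with no boundary terms, exactly as in~\eqref{eqnPeriodicStokesCoercivity}. The remaining work is under the boundary conditions~\eqref{eqnNlBC}.

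First I would extract two boundary facts from~\eqref{eqnNlBC}. Writing $u = Pu + \grad Q(u)$, the condition $Pu\cdot\tau = 0$ together with the automatic $Pu\cdot\nu = 0$ (since $Pu \in H$) forces $Pu = 0$ on $\del\Omega$; and since $\grad Q(u)\cdot\nu = (u - Pu)\cdot\nu = u\cdot\nu = 0$, we obtain $\del_\nu Q(u) = 0$ on $\del\Omega$. Thus the divergence-free part $Pu$ carries a homogeneous Dirichlet condition, while the potential $Q(u)$ carries a homogeneous Neumann condition --- the two ingredients that make the subsequent integrations by parts close.

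For self-adjointness I would set $u = a + \grad f$ and $v = b + \grad g$ with $a = Pu$, $b = Pv$, $f = Q(u)$, $g = Q(v)$, and expand $\ip{Au}{v}$. The two cross terms vanish by orthogonality: $\ip{-P\lap a}{\grad g} = \ip{-\lap a}{P\grad g} = 0$ and $\ip{-\grad\lap f}{b} = 0$ since $b \in H$. Integrating the surviving terms by parts and discarding boundary integrals using $b = 0$ and $\del_\nu g = 0$ gives
\[
\ip{Au}{v} = \int_\Omega \grad a : \grad b + \int_\Omega \lap f\,\lap g,
\]
which is symmetric in $u$ and $v$; this proves the first identity. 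Taking $v = u$ and using $\lap Q(u) = \divergence u$ yields $\ip{u}{Au} = \int_\Omega |\grad Pu|^2 + \int_\Omega (\divergence u)^2$.

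The main obstacle is the reduction of this last expression to $\int_\Omega |\grad u|^2$. The cross term $\int_\Omega \grad(Pu) : \grad\grad Q(u)$ vanishes (again using $Pu = 0$ on $\del\Omega$), so $\int_\Omega |\grad u|^2 = \int_\Omega |\grad Pu|^2 + \int_\Omega |\grad\grad Q(u)|^2$, and the claim reduces to the identity $\int_\Omega (\lap Q(u))^2 = \int_\Omega |\grad\grad Q(u)|^2$ (recall $\lap Q(u) = \divergence u$). This is a Reilly-type integration by parts whose boundary contribution must be handled carefully via $\del_\nu Q(u) = 0$, and it is here that the geometry of $\del\Omega$ enters; I expect this boundary analysis to be the delicate step. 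As a consistency check, in the physically relevant divergence-free case $u = Pu$ the potential $Q(u)$ vanishes identically, the gradient part drops out, and the identity collapses to the classical Stokes energy identity $\ip{u}{Au} = \int_\Omega |\grad u|^2$.
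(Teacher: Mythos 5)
Your self-adjointness argument is correct and is essentially the paper's own: both you and the paper reduce, via the boundary facts $Pu=Pv=0$ and $v\cdot\nu=0$ on $\del\Omega$, to the symmetric expression $\ip{Au}{v}=\ip{\grad Pu}{\grad Pv}+\ip{\divergence u}{\divergence v}$, so the first equality in~\eqref{eqnAselfAdj} is fine (and the periodic case is trivial in both treatments). The genuine gap is in the second identity, and it sits exactly at the step you flagged and left unproven: the claim $\int_\Omega(\lap Q(u))^2=\int_\Omega\abs{\grad\grad Q(u)}^2$ for a potential with $\del_\nu Q(u)=0$. This is not a deferrable technicality, because the step is false on curved boundaries. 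Reilly's formula gives, in two dimensions,
\begin{equation*}
\int_\Omega(\lap q)^2-\int_\Omega\abs{\grad^2 q}^2
=\int_{\del\Omega}\kappa\,(\del_\tau q)^2\,d\sigma
\qquad\text{whenever }\del_\nu q=0\text{ on }\del\Omega,
\end{equation*}
where $\kappa$ is the curvature of $\del\Omega$. Concretely, on the unit disk take $q=(r-r^3/3)\cos\theta$ and $u=\grad q$. Then $u$ is smooth, satisfies~\eqref{eqnNlBC} (indeed $Pu=0$ and $u\cdot\nu=\del_\nu q=0$ since $\del_r q=(1-r^2)\cos\theta$), and one computes $\lap q=-\tfrac83 x_1$, so
\begin{equation*}
\ip{u}{Au}=\int_\Omega(\lap q)^2=\frac{16\pi}{9},
\qquad
\int_\Omega\abs{\grad u}^2=\int_\Omega\abs{\grad^2 q}^2=\frac{4\pi}{3},
\end{equation*}
and the two differ by $\int_{\del\Omega}\kappa(\del_\tau q)^2\,d\sigma=4\pi/9$. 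Thus what your (and the paper's) computation actually proves is
\begin{equation*}
\ip{u}{Au}=\int_\Omega\abs{\grad u}^2+\int_{\del\Omega}\kappa\,\abs{\del_\tau Q(u)}^2\,d\sigma,
\end{equation*}
not the stated equality.

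You should also know that the paper's own proof stumbles at the same point, only in different clothing: it converts $\ip{\grad Pu}{\grad Pv}$ into $\ip{\curl u}{\curl v}$ (which is valid, since $Pu,Pv$ are divergence free and vanish on $\del\Omega$) and then asserts, as ``a direct calculation,'' that $\ip{\curl u}{\curl v}+\ip{\divergence u}{\divergence v}=\ip{\grad u}{\grad v}$. For vector fields that are merely tangential at the boundary, this curl--divergence splitting of the Dirichlet energy carries precisely the same second-fundamental-form correction, as the disk example above shows. So your instinct that ``the geometry of $\del\Omega$ enters'' at this step was exactly right --- the geometry enters and does not cancel. What survives, and what the applications actually need, is positivity: $\ip{u}{Au}=\int_\Omega\abs{\grad Pu}^2+\int_\Omega(\divergence u)^2\geq 0$ always, with $\ip{u}{Au}\geq\int_\Omega\abs{\grad u}^2$ on convex domains (where $\kappa\geq0$), and with exact equality in the periodic case, where both your argument and the paper's are complete.
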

\begin{proof}
In the periodic case, $P\lap = \lap P$. Thus $\grad p_s = 0$, $A = -\lap$, and both equalities in~\eqref{eqnAselfAdj} follow easily.

Suppose now $u, v$ satisfy~\eqref{eqnNlBC}. In view of~\eqref{eqnAdef}, we have
\begin{multline*}
\ip{Au}{v} = -\ip{P\lap u}{v} - \ip{\grad \divergence u}{v} = -\ip{P\lap P u}{v} - \ip{\grad \divergence u}{v} \\
= \ip{\grad P u}{\grad P v} + \ip{\divergence u}{\divergence v} - \int_{\del\Omega} \left[ (Pv)_i \frac{\del (Pu)_i}{\del \nu} - (\divergence u) v \cdot \nu \right].
\end{multline*}
Observe that $Pv = 0$ on $\del\Omega$, because because $Pv \cdot \nu = 0$ by definition of $P$, and $Pv \cdot \tau = 0$ by~\eqref{eqnNlBC}. Thus both the above boundary integrals vanish, giving
$$
\ip{Au}{v} = \ip{\grad P u}{\grad P v} + \ip{\divergence u}{\divergence v}.
$$
A similar calculation shows
$$
\ip{Av}{u} = \ip{\grad Pu}{\grad Pv} + \ip{\divergence u}{\divergence v}
$$
proving that $A$ is self adjoint.

Now because $Pu = Pv = 0$ on $\del \Omega$, a direct calculation shows that
$$
\ip{\grad Pu}{\grad Pv} = \ip{\curl Pu}{\curl Pv} = \ip{\curl u}{\curl v}.
$$
Consequently, we see
$$
\ip{Au}{v} = \ip{\curl u}{\curl v} + \ip{\divergence u}{\divergence v} = \ip{\grad u}{\grad v}.
$$
Setting $u = v$, the second assertion in~\eqref{eqnAselfAdj} follows.
\end{proof}

Finally, we turn to Proposition~\ref{ppnNLBCGexist}. Before presenting the proof, we remark that if we instead impose periodic boundary conditions, Proposition~\ref{ppnNLBCGexist} and its proof (below) go through almost unchanged. The only modification required is the justification of the Poincar\'e inequality that will be (implicitly) used in many estimates. For this justification, observe that with periodic boundary conditions, the mean of solutions to~\eqref{eqnNS}--\eqref{eqnPdef} is conserved. Thus, by switching to a moving frame, we can assume that the initial data, and hence the solution for all time, are mean zero. This will justify the use of the Poincar\'e inequality in the proof. With this, we prove Proposition~\ref{ppnNLBCGexist}.

\begin{proof}[Proof of Proposition~\ref{ppnNLBCGexist}]
Let $v = Pu$, and $q = Q(u)$.
Since
\[
P((\grad q\cdot\grad)\grad q)= P(\grad |\grad q|^2/2)=0,
\]
applying $P$ to~\eqref{eqnNS} gives
\begin{equation}\label{per.v}
\left\{
\begin{IEEEeqnarraybox}[][c]{c?s}
\del_t v - P \lap v + P((v\cdot \grad)(v+ \grad q)+(\grad q\cdot\grad)v) =0 & in $\Omega$,\\
v = 0 & on $\del \Omega$,%
\end{IEEEeqnarraybox}\right.
\end{equation}
where the boundary condition on $v$ comes from~\eqref{eqnNlBC}. The point is that
energy estimates can be used directly to estimate $v$,
since it satisfies explicit boundary conditions.

Since $Pv = v$, multiplying \eqref{per.v} by $v$ and integrating yields,
\begin{align}
\nonumber
\frac12\del_t  \norm{v}_{L^2}^2 + \norm{\grad v}_{L^2}^2
    &= \frac12 \int_\Omega |v|^2 \lap q - \int_\Omega v\cdot((v\cdot\grad)\grad q)\\
    \nonumber
    &\leq C \norm{v}_{L^4}^2 \norm{\grad^2 q}_{L^2} \leq C \norm{v}_{L^2} \norm{\grad v}_{L^2} \norm{\lap q}_{L^2}\\
    \label{per.qe}
    &\leq \frac12 \norm{\grad v}_{L^2}^2 + C \norm{v}_{L^2}^2 \norm{\divergence u}_{L^2}^2.
\end{align}
Here we used elliptic regularity to control $\norm{\grad^2 q}$ by $\norm{\lap q}$, which is valid since $\frac{\del q}{\del \nu} = 0$ on $\del \Omega$. We also used the (2D) Ladyzhenskaya inequality $\norm{v}_{L^4}^2 \leq C \norm{v}_{L^2} \norm{\grad v}_{L^2}$, which is valid since $v = 0$ on $\del \Omega$.

Since $\divergence u$ is a mean-zero solution of~\eqref{eqnHeat}, we know that
$$
\int_0^\infty \norm{\divergence u(t)}_{L^2}^2 \, dt \leq
    \frac{1}{2 \lambda_1} \norm{\divergence u_0}_{L^2}^2,
$$
where $\lambda_1$ is the smallest non-zero eigenvalue of the Laplacian with Neumann boundary conditions. Thus Gronwall's lemma and~\eqref{per.qe} gives the closed estimate
\begin{equation}\label{eqnNLBCEE}
\norm{v(t)}_{L^2}^2 + \int_0^t \norm{\grad v(s)}_{L^2}^2 \,ds \leq \exp\left(C \norm{\divergence u_0}_{L^2}^2 \right) \norm{v_0}_{L^2}^2.
\end{equation}

Since $Pv = v$, regularity of the (standard) Stokes operator tells us that the norms
$\norm{-P\lap v}_{L^2}$ and  $\norm{v}_{H^{2}}$ are equivalent (see for instance~\cite{bblConstFoias}*{Chapter 4}). Multiplying \eqref{per.v} by $-P \lap v$, integrating by parts, and using~\eqref{eqnIntFGH} to bound the nonlinear term in the usual way gives
$$
\del_t \norm{\grad v}_{L^2}^2 + \frac{1}{c} \norm{\lap v}_{L^2}^2 \leq C\left( \norm{v}_{L^2}^2 \norm{\grad v}_{L^2}^2 + \norm{\grad \divergence u}_{L^2}^2 + \norm{\divergence u}_{L^2}^4 \right) \norm{\grad v}_{L^2}^2.
$$
Using Gronwall's lemma, equation~\eqref{eqnNLBCEE} and~\eqref{eqnHeat}, we obtain
\begin{equation}\label{eqnNLBCH1}
\norm{\grad v(t)}_{L^2}^2 + \frac{1}{c} \int_0^t \norm{\lap v(s)}_{L^2}^2 \, ds \leq K \norm{\grad v_0}_{L^2}^2,
\end{equation}
for some constant $K = K(\Omega, \norm{\divergence u_0}_{L^2}, \norm{v_0}_{L^2})$. In fact, one can bound $K$ above by
$$
K \leq C \exp\left(C\left( \exp\left(C \norm{\divergence u_0}_{L^2}^2 \right) \norm{v_0}_{L^2}^4 + \norm{\divergence u_0}_{L^2}^2 + \norm{\divergence u_0}_{L^2}^4 \right) \right)
$$
for some constant $C = C(\Omega)$.

Finally, we consider a Galerkian scheme for~\eqref{eqnNS}--\eqref{eqnPdef} using eigenfunctions of the Stokes operator (with no-slip boundary conditions), and gradients of eigenfunctions of the Laplacian (with no-flux boundary conditions). It is easy to check that these Galerkian approximations satisfy the same energy estimates~\eqref{eqnNLBCEE} and~\eqref{eqnNLBCH1}. A bound for $\del_t u$ will then follow from~\eqref{eqnNS}, and standard techniques will prove global existence.
\end{proof}

\appendix

\section{Failure of coercivity under the standard inner product.}\label{sxnNonPositivity}

Most of this section is devoted to the proof that Stokes operator is not positive under the standard $L^2$ inner product (Proposition~\ref{ppnNonPositivity}).

\begin{proof}[Proof of Proposition~\ref{ppnNonPositivity}]
As mentioned earlier, the key idea in the proof is to identify the harmonic conjugate of the Stokes pressure as the harmonic extension of the vorticity. We begin by working up to this. Since
$$
\lap p_s = \divergence \grad p_s = \divergence \left(\lap  P - P \lap\right) u = 0,
$$
the Poincar\'e lemma guarantees the existence of $q_s$ such that
\begin{equation}\label{eqnQdef}
\grad p_s = \gradperp q_s \defeq
\begin{pmatrix}
-\del_2 q_s\\
\phantom{-}\del_1 q_s
\end{pmatrix}.
\end{equation}
Observe that both $p_s$ and $q_s$ are harmonic. Indeed,
\begin{equation}\label{eqnPQHarmonic}
\lap q_s = \curl \gradperp q_s = \curl \grad p_s = 0.
\end{equation}
We remark that equations~\eqref{eqnQdef} and~\eqref{eqnPQHarmonic} above show that $-q_s$ is the harmonic conjugate of $p_s$.

To obtain boundary conditions for $q_s$, let $\tau = -\nu^\perp$ be the
unit tangent vector on $\del \Omega$. To clarify our sign convention, if
$\nu = \binom{\nu_1}{\nu_2}$, then $\tau \defeq \binom{\nu_2}{-\nu_1}$.
Now observe
\begin{multline*}
\frac{\del q_s}{\del \tau}
    = \grad q_s \cdot \tau
    = \gradperp q_s \cdot \nu
    = \frac{\del p_s}{\del \nu}
    = \nu \cdot \left(\lap P - P \lap\right) u\\
    = \nu \cdot (\lap u - \grad \divergence u)
    = \nu \cdot \gradperp \curl u
    = \tau \cdot \grad \omega
    = \frac{\del \omega}{\del \tau},
\end{multline*}
where, as before,  $\curl u = \del_1 u_2 -\del_2 u_1$ is the two dimensional curl, and $\omega = \curl u$. Thus, adding a constant to $q_s$, we may, without loss of generality assume
\begin{equation}
q_s = \omega \quad \text{on }\del\Omega.
\end{equation}

A direct calculation shows
\begin{equation*}
\int_\Omega u \cdot A u
    = \int_\Omega \abs{\grad u}^2 + \int_\Omega \grad p_s \cdot u
    = \int_\Omega \left( \omega^2 + \abs{\divergence u}^2 \right) - \int_\Omega q_s \cdot \omega,
\end{equation*}
where we used the boundary condition $u = 0$ on $\del \Omega$ to integrate by parts. Thus to prove Proposition~\ref{ppnNonPositivity}, it is enough to produce a function $u$, satisfying the required boundary conditions, such that
\begin{equation}\label{CoercivityContradiction}
\int_\Omega q_s \omega \geq \norm{\omega}_{L^2}^2 + (C + 1) \norm{\divergence u}_{L^2}^2
\end{equation}
We prove the existence of such functions separately.
\begin{lemma}\label{lmaCoercivityContradiction}
For any $C > 0$, there exists $u\in H^1_0(\Omega)$ such that~\eqref{CoercivityContradiction} holds. As usual, $\omega=\curl u$,  and $q_s$ is the solution of the Dirichlet problem
\begin{equation}\label{eqnQ}
\left\{
\begin{aligned}
\lap q_s &= 0 &&\text{in }\Omega,\\
q_s &= \omega &&\text{on }\del \Omega
\end{aligned}\right.
\end{equation}
\end{lemma}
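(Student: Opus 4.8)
The idea is to produce $u$ whose vorticity is a thin boundary layer cut out of a \emph{fixed} harmonic function: then the harmonic extension $q_s$ stays of unit size in $L^2$ while $\omega$ itself becomes small, and this separation of scales is exactly what lets $\int_\Omega q_s\omega$ overtake $\norm{\omega}_{L^2}^2$. Fix a nonconstant harmonic function $h$ on $\Omega$, normalized so that $\int_{\del\Omega}h\,ds=0$ (on the disk one may take $h=x_1$; for a general simply connected $\Omega$ pull such an $h$ back by a conformal map). Let $\rho(x)$ denote the distance from $x$ to $\del\Omega$, let $\eta\colon[0,\infty)\to[0,1]$ be smooth with $\eta\equiv1$ on $[0,1]$ and $\eta\equiv0$ on $[2,\infty)$, and set $\eta_\delta(x)=\eta(\rho(x)/\delta)$. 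I would take $\omega\defeq h\eta_\delta$. Because $\eta_\delta\equiv1$ near $\del\Omega$ we have $\omega=h$ on $\del\Omega$, and since $h$ is already harmonic the unique solution of the Dirichlet problem~\eqref{eqnQ} is $q_s=h$, for \emph{every} $\delta$.

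With $q_s=h$ and $\omega=h\eta_\delta$ the gain is immediate:
\[
\int_\Omega q_s\omega-\norm{\omega}_{L^2}^2=\int_\Omega h^2\bigl(\eta_\delta-\eta_\delta^2\bigr)=\int_\Omega h^2\,\eta_\delta(1-\eta_\delta)\ge0 .
\]
The integrand is nonnegative and supported in the collar $\{\delta<\rho<2\delta\}$, whose area is $\asymp\delta$ and on which $h\not\equiv0$; writing the integral in boundary-layer coordinates gives $\int_\Omega h^2\eta_\delta(1-\eta_\delta)=c_0\delta+O(\delta^2)$ with $c_0=\bigl(\int_{\del\Omega}h^2\,ds\bigr)\bigl(\int_0^\infty\eta(1-\eta)\,dr\bigr)>0$. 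In particular the gain is strictly positive and of exact order $\delta$.

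It remains to realize $\omega$ as $\curl u$ for some $u\in H^1_0(\Omega,\R^2)$ with \emph{small} divergence. The compatibility condition $\int_\Omega\omega=0$ needed for $u\in H^1_0$ holds up to $O(\delta^2)$ by the normalization $\int_{\del\Omega}h=0$, and can be enforced exactly by subtracting a fixed bump supported away from $\del\Omega$ of mass $O(\delta^2)$, which changes neither $q_s$ nor, to order $\delta$, the gain. Among all $u\in H^1_0$ with $\curl u=\omega$ — an affine family differing by gradients $\grad g$ that lie in $H^1_0$ — I would take one of least $L^2$ divergence. For any such realizer, two integrations by parts using $u=0$ on $\del\Omega$ and $\curl u=\omega$ give the identity
\[
\ip{\divergence u}{\chi}=-\ip{\tilde\chi}{\omega}\qquad\text{for every harmonic $\chi$ with conjugate $\tilde\chi$,}
\]
and this pins down precisely the part of $\divergence u$ that the curl-free freedom cannot remove. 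Since $\omega$ is supported in the $O(\delta)$ collar and $h$ is a fixed smooth (low-frequency) function, the right-hand side is $O(\delta)$ uniformly over $\chi$ normalized in $L^2$: only the fixed, finite-dimensional set of harmonic modes resonating with $h$ contributes, and each pairs against the thin collar. Hence the least-divergence realizer satisfies $\norm{\divergence u}_{L^2}\le C_1\delta$ with $C_1=C_1(\Omega,h)$.

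Combining the two estimates, $\int_\Omega q_s\omega-\norm{\omega}_{L^2}^2\ge \tfrac12 c_0\delta$ while $(C+1)\norm{\divergence u}_{L^2}^2\le (C+1)C_1^2\delta^2$ for all small $\delta$, so~\eqref{CoercivityContradiction} holds as soon as $\delta<c_0/\bigl(2(C+1)C_1^2\bigr)$; elliptic regularity together with the smoothness of $h$ and $\eta_\delta$ upgrades $u$ to $C^2(\bar\Omega)$, matching Proposition~\ref{ppnNonPositivity}. The one genuinely delicate step is the divergence bound above: the naive elliptic estimate only yields $\norm{\divergence u}_{L^2}=O(\sqrt\delta)$, which merely matches the gain and fails for large $C$. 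Obtaining the sharp $O(\delta)$ bound — and thereby beating the factor $C+1$ — is exactly where the boundary concentration of $\omega$ and the fixed, low-frequency choice of $h$ must be exploited.
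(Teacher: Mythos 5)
Your construction and the gain computation are correct and elegant: with $\omega=h\eta_\delta$ one indeed gets $q_s=h$ for every $\delta$, and the gain $\int_\Omega q_s\omega-\norm{\omega}_{L^2}^2=\int_\Omega h^2\eta_\delta(1-\eta_\delta)\asymp\delta$. The duality identity you write down (pairing $\divergence u$ against harmonic $\chi$ and $\omega$ against the conjugate $\tilde\chi$) is also the right reduction: it shows the least-divergence realizer has $\norm{\divergence u}_{L^2}$ comparable to the $L^2$-norm of the orthogonal projection of $\omega$ onto harmonic functions, modulo the (unproved, but true on smooth simply connected domains) boundedness of harmonic conjugation on $L^2$. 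The genuine gap is that the crucial estimate $\norm{\divergence u}_{L^2}\leq C_1\delta$ is asserted, never proved, and the heuristic you offer for it is not accurate: it is false that ``only a fixed, finite-dimensional set of harmonic modes resonating with $h$ contributes.'' Normalized $L^2$ harmonic functions concentrate precisely in an $O(1/m)$ collar at the boundary (on the disk, $\sqrt{m}\,r^me^{im\theta}$), exactly where $\omega$ lives, so a priori each high mode can pair against the layer at size $\sqrt{\delta}$ --- which is just the naive bound you correctly identify as insufficient. What actually saves the argument is the interplay between the thin support (a factor $\delta$ per pairing) and the rapid decay of the angular Fourier coefficients of the fixed smooth $h$, with the crossover at $m\sim 1/\delta$ handled; this must be carried out. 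On the disk with $h=x_1$ it is a short orthogonality computation (only the modes $m=\pm1$ pair nontrivially, each at size $\leq 2\delta$, so the projection is $O(\delta)$), and there your proof becomes complete and correct; on a general simply connected $C^3$ domain it requires real work, e.g.\ writing $\eta_\delta=\lap W_\delta+O(\delta)$ on the collar for a boundary-layer profile $W_\delta(\rho)$ with $W_\delta=O(\delta^2)$, $W_\delta'=O(\delta)$, and controlling the resulting boundary terms by the $H^{-1/2}(\del\Omega)$ trace bound for $L^2$ harmonic functions. Since beating the naive $O(\sqrt{\delta})$ bound is where the entire difficulty of the lemma is concentrated, this omission is a missing proof, not a routine detail.

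For comparison, the paper sidesteps any sharp quantitative estimate by working locally in boundary coordinates $(s,r)$ near one boundary point: it fixes $p=\alpha(s)\beta(r)$ and takes $\psi=\alpha'(s)\gamma(r)$, for which the key boundary term equals $\int\alpha'(s)^2\left(\gamma''(0)+\kappa(s)\right)ds$ while the right-hand side of~\eqref{eqnCoercivityContradictionPlus} is bounded by $C+C\norm{\gamma}_{H^2}^2$; since $\gamma''(0)$ can be made arbitrarily large with $\norm{\gamma}_{H^2}$ bounded, the gain blows up while the penalty stays fixed. That route rests only on the elementary fact that a pointwise boundary trace of a second derivative is not controlled by the $H^2$ norm, whereas yours operates in the opposite regime (small gain, even smaller penalty) and needs a harmonic-analysis input. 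Once you supply the projection bound, your argument would be a correct and genuinely different proof.
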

The Lemma immediately finishes the proof of Proposition~\ref{ppnNonPositivity}.
\end{proof}

\begin{proof}[Proof of Lemma~\ref{lmaCoercivityContradiction}.]
We look for $u$ of the form $u=v+\nabla p$ with $\divergence v=0$ in $\Omega$ and $v\cdot \nu=0$ on $\partial\Omega$, where $\nu$ denotes the outward pointing normal vector on the boundary. Then there exists $\psi$ on $\Omega$ so that $v=\gradperp\psi$. The boundary condition $v\cdot \nu=0$ becomes $\frac{\partial\psi}{\partial\tau}=0$ where $\tau$ denotes the tangential direction on $\partial\Omega$.

We note that $u=0$ and $v\cdot\nu=0$ on $\partial\Omega$ imply
$$
 \frac{\partial p}{\partial \tau}=-v\cdot\tau=-\frac{\partial\psi}{\partial \nu},\quad \frac{\partial p}{\partial \nu}=0,\quad v\cdot\nu=\frac{\partial\psi}{\partial\tau}=0.
$$
As $\frac{\partial\psi}{\partial\tau}=0$ and $\psi$ is determined up to a constant we can assume without loss of generality that $\psi=0$ on $\partial\Omega$ and then for a given $u$ the stream function $\psi$ is uniquely determined as the solution of the Dirichlet problem
\begin{alignat*}{2}
\lap \psi &= \omega \quad&& \text{in } \Omega,\\
\psi &= 0 && \text{on }\del \Omega.
\end{alignat*}
Then, we have
\begin{align*}
\int_\Omega q_s\omega\,dx=\int_\Omega q_s\lap\psi\,dx=\int_\Omega \lap q_s\psi\,dx+\int_{\partial\Omega} \frac{\partial\psi}{\partial\nu}q_s\,d\sigma-\int_{\partial\Omega}\frac{\partial q_s}{\partial\nu}\psi\,d\sigma\\
=\int_{\partial\Omega} \frac{\partial\psi}{\partial\nu}q_s\,d\sigma=\int_{\partial\Omega}\frac{\partial\psi}{\partial\nu}\lap\psi\,d\sigma,
\end{align*}
and~\eqref{CoercivityContradiction} becomes
\begin{equation}\label{eqnCoercivityContradictionPlus}
\int_{\partial\Omega}\frac{\partial\psi}{\partial\nu}\lap\psi\,d\sigma\geq\norm{\lap \psi}_{L^2}^2+(C+1)\|\lap p\|_{L^2}^2.
\end{equation}

Summarizing it suffices to find $\psi,p$ such that \eqref{eqnCoercivityContradictionPlus} holds together with the boundary conditions
\begin{equation}\label{PsiPCompatibility}
\psi=0,
\quad \frac{\partial p}{\partial \nu}=0,
\quad\text{and}\quad \frac{\partial p}{\partial \tau}=-\frac{\partial\psi}{\partial\nu}
\quad\text{on } \del \Omega.
\end{equation}
Fix some point $x_0\in\partial\Omega$ and let 
$s\mapsto \hat x(s)$ be an arclength parametrization
of the ($C^3$) boundary $\partial\Omega$, such that $x_0=\hat x(0)$
and  oriented so that the outward unit normal 
$\hat\nu(s)$ at $\hat x(s)$ satisfies $\hat\nu(s)^\perp=\hat x'(s)$.
Then the map $(s,r)\mapsto x=\hat x(s)-r\hat\nu(s)$ is $C^2$ and 
is locally invertible near $x_0$, providing 
orthogonal coordinates 
$x\mapsto (s,r) \in(-\epsilon,\epsilon)\times(0,\epsilon)$
in some neighborhood of $x_0$ in $\Omega$.

We fix $p$ to be of the form $p(x)=\alpha(s)\beta(r)$ where
$\alpha$ and $\beta$ are in 
$C^\infty_c((-\epsilon,\epsilon))$
and $\beta(0)=1$, $\beta'(0)=0$.  
We will then choose $\psi$ of the form
$\psi(x)=\alpha'(s)\gamma(r)$ where 
$\gamma\in C^\infty_c((-\epsilon,\epsilon))$
with $\gamma(0)=0$ and $\gamma'(0)=1$.
Then \eqref{PsiPCompatibility} will hold, and direct calculation shows
\[
\int_{\partial\Omega}\frac{\partial\psi}{\partial\nu}\lap\psi\,d\sigma
= \int_{-\epsilon}^{\epsilon} \alpha'(s)^2(\gamma''(0)+\kappa(s))\,ds
\]
where $\kappa(s)=\lap r(\hat x(s))$ is the curvature of the boundary.
The right-hand side of \eqref{eqnCoercivityContradictionPlus}
on the other hand, is easily computed to be bounded by 
$C+C\|\gamma\|_{H^2}^2$, with a constant $C$ independent of the choice
of $\gamma$.  It is clear that $\gamma$ can be chosen to make 
$\gamma''(0)$ arbitrarily large
while $\|\gamma\|_{H^2}^2$ remains bounded. 
Thus \eqref{eqnCoercivityContradictionPlus} holds for some $\psi$ and $p$.
\end{proof}

\medskip Finally, to conclude this section we turn to the proof of Corollary~\ref{clyL2Increase}. Of course the proof is immediate from Proposition~\ref{ppnNonPositivity}, and we only present it here for completeness.
\begin{proof}[Proof of Corollary~\ref{clyL2Increase}]
Choose $u_0 \in C^2(\bar\Omega)$ to be such that~\eqref{eqnNonPositivity} holds, and let $u$ be the solution to~\eqref{eqnExtendedStokes} with initial data $u_0$. By continuity in time, we must have
$$
\int_\Omega u(t) \cdot A u(t) < 0
$$
for all $t$ in some small interval $[0, t_0]$. Thus $\del_t \norm{u}_{L^2}^2 = -\int_\Omega u \cdot Au > 0$ on the interval $(0, t_0]$ which immediately completes the proof.
\end{proof}

\section*{Acknowledgments}
The authors would like to thank James P. Kelliher for insightful discussions related to this work.
\vspace{-.08in}
\begin{bibdiv}
\begin{biblist}
\bib{bblAdams}{book}{
   author={Adams, Robert A.},
   author={Fournier, John J. F.},
   title={Sobolev spaces},
   series={Pure and Applied Mathematics (Amsterdam)},
   volume={140},
   edition={2},
   publisher={Elsevier/Academic Press, Amsterdam},
   date={2003},
   pages={xiv+305},
   isbn={0-12-044143-8},
   review={\MR{2424078 (2009e:46025)}},
}
\bib{bblAbels}{incollection} {
    AUTHOR = {Abels, Helmut},
     TITLE = {Bounded imaginary powers and {$H_\infty$}-calculus of the
              {S}tokes operator in unbounded domains},
 BOOKTITLE = {Nonlinear elliptic and parabolic problems},
    SERIES = {Progr. Nonlinear Differential Equations Appl.},
    VOLUME = {64},
     PAGES = {1--15},
 PUBLISHER = {Birkh\"auser},
   ADDRESS = {Basel},
      YEAR = {2005},
   review={\MR{2185207 (2006g:35199)}},
       DOI = {10.1007/3-7643-7385-7\_1},
}
\bib{bblConstOpenProblems}{article}{
   author={Constantin, Peter},
   title={Some open problems and research directions in the mathematical
   study of fluid dynamics},
   conference={
      title={Mathematics unlimited---2001 and beyond},
   },
   book={
      publisher={Springer},
      place={Berlin},
   },
   date={2001},
   pages={353--360},
   review={\MR{1852164}},
}
\bib{bblConstFoias}{book}{
   author={Constantin, Peter},
   author={Foias, Ciprian},
   title={Navier-Stokes equations},
   series={Chicago Lectures in Mathematics},
   publisher={University of Chicago Press},
   place={Chicago, IL},
   date={1988},
   pages={x+190},
   isbn={0-226-11548-8},
   isbn={0-226-11549-6},
   review={\MR{972259 (90b:35190)}},
}
\bib{bblFeffermanClay}{article}{
   author={Fefferman, Charles L.},
   title={Existence and smoothness of the Navier-Stokes equation},
   conference={
      title={The millennium prize problems},
   },
   book={
      publisher={Clay Math. Inst., Cambridge, MA},
   },
   date={2006},
   pages={57--67},
   review={\MR{2238274}},
}
\bib{bblGrubbSolonnikov2}{article}{
   author={Grubb, Gerd},
   author={Solonnikov, V. A.},
   title={Reduction of basic initial-boundary value problems for
   Navier-Stokes equations to initial-boundary value problems for nonlinear
   parabolic systems of pseudo-differential equations},
   language={Russian, with English summary},
   journal={Zap. Nauchn. Sem. Leningrad. Otdel. Mat. Inst. Steklov.
   (LOMI)},
   volume={171},
   date={1989},
   number={Kraev. Zadachi Mat. Fiz. i Smezh. Voprosy Teor. Funktsii. 20},
   pages={36--52, 183--184},
   issn={0373-2703},
   translation={
      journal={J. Soviet Math.},
      volume={56},
      date={1991},
      number={2},
      pages={2300--2308},
      issn={0090-4104},
   },
   review={\MR{1031983 (91e:35174)}},
}
\bib{bblGrubbSolonnikov3}{article}{
   author={Grubb, Gerd},
   author={Solonnikov, Vsevolod A.},
   title={Boundary value problems for the nonstationary Navier-Stokes
   equations treated by pseudo-differential methods},
   journal={Math. Scand.},
   volume={69},
   date={1991},
   number={2},
   pages={217--290 (1992)},
   issn={0025-5521},
   review={\MR{1156428 (93e:35082)}},
}
\bib{bblHenry}{book}{
   author={Henry, Daniel},
   title={Geometric theory of semilinear parabolic equations},
   series={Lecture Notes in Mathematics},
   volume={840},
   publisher={Springer-Verlag},
   place={Berlin},
   date={1981},
   pages={iv+348},
   isbn={3-540-10557-3},
   review={\MR{610244 (83j:35084)}},
}
\bib{bblKato}{book}{
   author={Kato, Tosio},
   title={Perturbation theory for linear operators},
   edition={2},
   note={Grundlehren der Mathematischen Wissenschaften, Band 132},
   publisher={Springer-Verlag},
   place={Berlin},
   date={1976},
   pages={xxi+619},
   review={\MR{0407617 (53 \#11389)}},
}
\bib{bblLadyzenskaja}{article}{
   author={Lady{\v{z}}enskaja, O. A.},
   title={Unique global solvability of the three-dimensional Cauchy problem
   for the Navier-Stokes equations in the presence of axial symmetry},
   language={Russian},
   journal={Zap. Nau\v cn. Sem. Leningrad. Otdel. Mat. Inst. Steklov.
   (LOMI)},
   volume={7},
   date={1968},
   pages={155--177},
   review={\MR{0241833 (39 \#3170)}},
}
\bib{bblLeray}{article}{
   author={Leray, Jean},
   title={Sur le mouvement d'un liquide visqueux emplissant l'espace},
   language={French},
   journal={Acta Math.},
   volume={63},
   date={1934},
   number={1},
   pages={193--248},
   issn={0001-5962},
   review={\MR{1555394}},
}
\bib{bblLiuLiuPego}{article}{
   author={Liu, Jian-Guo},
   author={Liu, Jie},
   author={Pego, Robert L.},
   title={Stability and convergence of efficient Navier-Stokes solvers via a
   commutator estimate},
   journal={Comm. Pure Appl. Math.},
   volume={60},
   date={2007},
   number={10},
   pages={1443--1487},
   issn={0010-3640},
   review={\MR{2342954 (2008k:76039)}},
   doi={10.1002/cpa.20178},
}
\bib{bblKelliher}{unpublished}{
   author={Kelliher, J.},
   note={Private communication.},
}
\bib{bblLiuLiuPego2010}{article}{
   author={Liu, Jian-Guo},
   author={Liu, Jie},
   author={Pego, Robert L.},
   title={Stable and accurate pressure approximation for unsteady
   incompressible viscous flow},
   journal={J. Comput. Phys.},
   volume={229},
   date={2010},
   number={9},
   pages={3428--3453},
   issn={0021-9991},
   review={\MR{2601108 (2011c:76046)}},
   doi={10.1016/j.jcp.2010.01.010},
}
\bib{bblTemam}{book}{
   author={Temam, Roger},
   title={Navier-Stokes equations. Theory and numerical analysis},
   note={Studies in Mathematics and its Applications, Vol. 2},
   publisher={North-Holland Publishing Co.},
   place={Amsterdam},
   date={1977},
   pages={x+500},
   isbn={0-7204-2840-8},
   review={\MR{0609732 (58 \#29439)}},
}\end{biblist}
\end{bibdiv}
\end{document}